\documentclass[12pt,a4paper]{article}
\parindent0pt\parskip1ex

\usepackage{geometry}
\geometry{left=27mm,right=40mm,top=20mm,bottom=39mm}


\usepackage{times,authblk}
\usepackage[UKenglish]{babel}
\usepackage{amsmath, amsfonts, amssymb, amsthm, bm, stmaryrd, enumerate, mathtools}
\usepackage{graphicx, tikz-cd}
\renewcommand{\epsilon}{\varepsilon}
\renewcommand{\phi}{\varphi}
\renewcommand{\rho}{\varrho}

\newtheorem{Def}{Definition}[section]
\newenvironment{definition}{\begin{Def} \rm}{\end{Def}}
\newtheorem{lemma}[Def]{Lemma}
\newtheorem{proposition}[Def]{Proposition}

\newtheorem{theorem}[Def]{Theorem}
\newtheorem{example}[Def]{Example}

\newcommand{\komma}{,\hspace{0.3em}}
\newcommand{\id}{\text{id}}
\renewcommand{\leq}{\leqslant}
\renewcommand{\geq}{\geqslant}
\renewcommand{\emptyset}{\varnothing}

\newcommand{\Naturals}{{\mathbb N}}

\newcommand{\Rationals}{{\mathbb Q}}
\newcommand{\Reals}{{\mathbb R}}
\newcommand{\Complexes}{{\mathbb C}}


\newcommand{\NOS}{{\mbox{${\mathcal N}{\mathcal O}{\mathcal S}$}}}
\newcommand{\LOS}{{\mbox{${\mathcal L}{\mathcal O}{\mathcal S}$}}}
\newcommand{\EOS}{{\mbox{${\mathcal E}{\mathcal O}{\mathcal S}$}}}

\newcommand{\notperp}{\mathbin{\not\perp}}
\newcommand{\perpe}[1]{\mathbin{\perp_{#1}}}
\renewcommand{\c}{^\perp}
\newcommand{\cc}{^{\perp\perp}}
\newcommand{\ce}[1]{^{\perp_{#1}}}
\newcommand{\cce}[1]{^{\perp_{#1}\perp_{#1}}}
\newcommand{\herm}[2]{( #1 , #2 )}
\newcommand{\hermbig}[2]{\left( #1 , #2 \right)}
\newcommand{\lin}[1]{[#1]}
\newcommand{\withoutzero}{^{\raisebox{0.2ex}{\scalebox{0.4}{$\bullet$}}}}
\newcommand{\msg}{^{\raisebox{0.2ex}{\scalebox{0.6}{$\times$}}}}
\newcommand{\n}{{\mathbf n}}

\newcommand{\Two}{{\mathbf 2}}
\newcommand{\Three}{{\mathbf 3}}
\renewcommand{\vector}[3]{{\tiny \begin{pmatrix} #1 \\ #2 \\ #3 \end{pmatrix}}}
\DeclarePairedDelimiter{\norm}{\lVert}{\rVert}
\renewcommand{\angle}{\sphericalangle\,}


\begin{document}

\title{Categories of orthogonality spaces}

\author[1]{Jan Paseka}

\author[2]{Thomas Vetterlein}

\affil[1]{\footnotesize
Department of Mathematics and Statistics,
Masaryk University \authorcr
Kotl\'a\v rsk\'a 2, 611\,37 Brno, Czech Republic \authorcr
{\tt paseka@math.muni.cz}}

\affil[2]{\footnotesize
Department of Knowledge-Based Mathematical Systems,
Johannes Kepler University Linz \authorcr
Altenberger Stra\ss{}e 69, 4040 Linz, Austria \authorcr
{\tt Thomas.Vetterlein@jku.at}}

\date{\today}

\maketitle

\begin{abstract}\parindent0pt\parskip1ex

\noindent An orthogonality space is a set equipped with a symmetric and irreflexive binary relation. We consider orthogonality spaces with the additional property that any collection of mutually orthogonal elements gives rise to the structure of a Boolean algebra. Together with the maps that preserve the Boolean structures, we are led to the category \NOS\ of normal orthogonality spaces.

Moreover, an orthogonality space of finite rank is called linear if for any two distinct elements $e$ and $f$ there is a third one $g$ such that exactly one of $f$ and $g$ is orthogonal to $e$ and the pairs $e, f$ and $e, g$ have the same orthogonal complement. Linear orthogonality spaces arise from finite-dimensional Hermitian spaces. We are led to the full subcategory \LOS\ of \NOS\ and we show that the morphisms are the orthogonality-preserving lineations.

Finally, we consider the full subcategory \EOS\ of \LOS\ whose members arise from positive definite Hermitian spaces over Baer ordered $\star$-fields with a Euclidean fixed field. We establish that the morphisms of \EOS\ are induced by generalised semiunitary mappings.

{\it Keywords:} Orthogonality spaces; undirected graphs; categories; Boolean subalgebras; linear orthogonality spaces; generalised semilinear map; generalised semiunitary map

{\it MSC:} 81P10; 06C15; 46C05

\mbox{}\vspace{-2ex}

\end{abstract}

\section{Introduction}
\label{sec:Introduction}

In quantum mechanics, physical processes are described in a way assigning an essential role to the observer. Rather than predicting on the basis of complete initial conditions the unambiguous development of some physical system, the theory assigns probabilities to pairs consisting of a preparation procedure and the outcome of a subsequent measurement. Why the formalism has proved successful is by and large today still unanswered; we could admit that we rather got used to it. But even at the most basic level, there are unresolved issues. A key ingredient of the model is a certain inner-product space -- a Hilbert space over the field of complex numbers --, and the deeper reasons for this choice are a matter of ongoing discussions.

The probably oldest approach aiming to clarify the basic principles on which quantum theory is based is due to Birkhoff and von Neumann \cite{BiNe}. The keyword ``quantum logic'' is often used in this context but might be misleading. What in our eyes rather matters is the idea of increasing the degree of abstraction: the question is whether the Hilbert space can be recovered from a considerably simpler structure. Numerous types of algebras, including partial ones, have been proposed and investigated, the best-known example being orthomodular lattices, which describe the Hilbert space by means of the inner structure of its closed subspaces. For an overview of possible directions, we may refer, e.g., to the handbooks \cite{EGL1,EGL2}.

Increasing the degree of abstraction means to restrict the structure to the necessary minimum. An approach that was proposed in the 1960s by David Foulis and his collaborators goes presumably to the limits of what is possible. They coined the notion of an orthogonality space, which is simply a set endowed with a symmetric and irreflexive binary relation \cite{Dac,RaFo,Wlc}. The prototypical example is the collection of one-dimensional subspaces of a Hilbert space together with the usual orthogonality relation.

The notion of an orthogonality space is in the centre of the present work and the main motivation behind our work is to elaborate on its role within the basic quantum-physical model. We generally deal with the case of a finite rank, meaning that there are only finitely many pairwise orthogonal elements. We should certainly be aware of the fact that orthogonality spaces are as general as undirected graphs, which in turn are rarely put into context with inner-product spaces. As has been shown in \cite{Vet3}, however, the relationship between the two types of structures is close. An orthogonality space of finite rank is called {\it linear} if, for any distinct elements~$e$ and~$f$, there is a further one~$g$ such that exactly one of~$f$ and~$g$ is orthogonal to~$e$ and the set of elements orthogonal to~$e$ and~$f$ coincides with the set of elements orthogonal to~$e$ and~$g$. Linearity characterises the orthogonality spaces that arise from finite-dimensional Hermitian spaces.

In physics, symmetries of the model generally play a fundamental role. It might thus not come as a surprise that orthogonality spaces associated with complex Hilbert spaces are describable by the particular properties of their automorphisms \cite{Vet1,Vet2}. Here, we explore this issue further, but we adopt a more general perspective than in the previous works.

The present paper is devoted to the investigation of structure-preserving maps between orthogonality spaces. We do so first in a general context, taking into account features inherent to orthogonality spaces, and in a second step, we turn to the narrower class of linear orthogonality spaces. We start with the question how to reasonably define morphisms. It certainly seems to make sense to require nothing more than the preservation of the single binary relation on which the structures are based. We call orthogonality-preserving maps homomorphisms. To choose homomorphisms as morphisms, however, is inexpedient when the context that we ultimately have in mind is given by inner-product spaces. Indeed, for linear orthogonality spaces, we expect a morphism to preserve, in some sense, linear dependence. The following situation illustrates the difficulties \cite{Sem}, even though we otherwise deal with the finite-dimensional case only. Consider the complex projective space over three dimensions $P(\Complexes^3)$ as well as over $2^{\aleph_0}$ dimensions $P(\Complexes^{2^{\aleph_0}})$; then any injective map from $P(\Complexes^3)$ to $P(\Complexes^{2^{\aleph_0}})$ such that the image consists of mutually orthogonal elements is a homomorphism of orthogonality spaces, but in no way related to the preservation of linear dependence.

We note that these problems do not arise in approaches that consider the orthogonality relation not as basic but as an additional structure. Projective geometries enhanced by an orthogonality relation were studied, e.g., in \cite{FaFr,StSt}. Here, we try an alternative way. Having in mind the Hilbert space model of quantum physics, we focus on an adjusted kind of orthogonality spaces, ruling out structural peculiarities that we must consider as inappropriate. In quantum mechanics, observables correspond to Boolean algebras. In a finite-state system, measurement outcomes correspond to mutually orthogonal subspaces, which in turn generate a Boolean subalgebra of the lattice of closed subspaces. We require to have an analogue of this situation in our more abstract setting and we take it into account for our definition of morphisms.

To be more specific, let us first recall that orthogonality spaces lead us straightforwardly to the realm of lattice theory. A subset $A$ of an orthogonality space $(X, \perp)$ is called orthoclosed if $A = B\c$ for some $B \subseteq X$, where $B\c$ is the set of $e \in X$ orthogonal to all elements of $B$. The set of orthoclosed subsets form a complete ortholattice ${\mathcal C}(X, \perp)$. Now, consider a collection $E = \{x_1, \ldots, x_k\}$ of mutually orthogonal elements of $X$. Then the subsets of $E$ generate a subortholattice of ${\mathcal C}(X, \perp)$. This subortholattice is, in general, not isomorphic to the Boolean algebra of subsets of $E$; in case it always is, we call $(X, \perp)$ {\it normal}. We moreover name homomorphisms in the same way if they preserve, in a natural sense, Boolean subalgebras of ${\mathcal C}(X, \perp)$. We thus arrive at the category \NOS\ of normal orthogonality spaces and normal homomorphisms.

We take up in this way an often-discussed issue. Indeed, for the aim of recovering a Hilbert space or, more generally, an orthomodular lattice from suitable substructures, it has been a guiding motive to consider the lattice as being glued together from its Boolean subalgebras; see, e.g., \cite[Section 4]{Nav}. Moreover, deep results have been achieved on the question how to reconstruct orthomodular lattices or related quantum structures from the poset of their Boolean subalgebras \cite{HaNa,HHLN}.

Any linear orthogonality space is normal and thus our next step is to consider normal homomorphisms between linear orthogonality spaces. That is, we investigate the full subcategory \LOS\ of \NOS, consisting of linear orthogonality spaces. It turns out that the morphisms in \LOS\ do have the most basic property to be expected: they are maps between projective spaces that preserve the triple relation of being contained in a line, that is, they are lineations. In fact, we show that the morphisms are exactly the orthogonality-preserving lineations.

Our final objective is to describe the morphisms in \LOS\ as precisely as possible. Generalisations of the fundamental theorem of projective geometry show that any lineation is induced by a generalised semilinear transformation -- provides it is non-degenerate \cite{Mach,Fau}. Here, non-degeneracy means two additional conditions to hold: (1) the image is not contained in a $2$-dimensional subspace, and (2) the image of a line is never $2$-element. Provided that the rank is at least $3$, condition (1) is ensured. Condition (2), however, leads us to an issue dealt with in the discussions around the peculiarities of quantum physics: we show that a violation of (2) implies the existence of two-valued measures. The exclusion of two-valued measures is in turn a consequence of Gleason's Theorem in case that the skew field is $\Complexes$ or $\Reals$. Although the case of specific further skew fields has been discussed \cite{Dvu}, not much seems to be known about the general case. Here, we show that if the skew field of scalars is a Euclidean subfield of the reals, two-valued measures do not exist. Consequently, the same applies if a $\star$-field possesses a subfield of this type.

Moreover, we deal with lineations that in addition preserve an orthogonality relation. It seems natural to ask whether the representing generalised semilinear map can be chosen to preserve in some sense the inner product. We establish that this is the case under particular conditions: the skew field is commutative, that is, a field, and there is a basis of vectors of equal length. These conditions apply for positive definite Hermitian spaces over Baer ordered $\star$-fields whose fixed field is Euclidean. In this case, morphisms are induced by what we call generalised semiunitary maps.

The paper is organised as follows. In the following Section~\ref{sec:Normal-orthogonality-spaces}, we fix the basic notation used in this paper. Moreover, we introduce and discuss normal orthogonality spaces, including a characterisation of normality as an intrinsic property, without reference to the associated ortholattice. In Section~\ref{sec:NOS}, we investigate the category \NOS\ of normal orthogonality spaces and normal homomorphisms. In Section~\ref{sec:Hermitian-spaces}, we prepare the ground for the discussion of those orthogonality spaces that arise from inner-product spaces; in particular, we discuss lineations between projective spaces and discuss their representation in the presence of an inner product. Then, in Section~\ref{sec:Linear-orthogonality-spaces}, we recall the notion of a linear orthogonality space and we show that linearity implies normality. Finally, in Section~\ref{sec:LOS}, we study the full subcategory \LOS\ of \NOS\ that consists of linear orthogonality spaces, focussing especially on the morphisms induced by generalised semiunitary maps. Some concluding remarks are found in the final Section~\ref{sec:Conclusion}.

\section{Normal orthogonality spaces}
\label{sec:Normal-orthogonality-spaces}

We deal in this paper with the following relational structures.

\begin{definition}
An {\it orthogonality space} is a non-empty set $X$ equipped with a symmetric, irreflexive binary relation $\perp$, called the {\it orthogonality relation}. The supremum of the cardinalities of sets of mutually orthogonal elements of $X$ is called the {\it rank} of~$(X, \perp)$.
\end{definition}

We may observe that orthogonality spaces are essentially the same as undirected graphs, understood such that the edges are $2$-elements subsets of the set of nodes. The rank of an orthogonality space is under this identification the supremum of the sizes of cliques. The present work, however, is not motivated by graph theory, our guiding example rather originates in quantum physics.

\begin{example} \label{ex:standard-example-1}
Let $H$ be a Hilbert space. Then the set $P(H)$ of one-dimensional subspaces of $H$, together with the usual orthogonality relation, is an orthogonality space, whose rank coincides with the dimension of $H$.
\end{example}

The {\it (orthogonal) complement} of a subset $A$ of an orthogonality space $X$ is
\[ A\c \;=\; \{ x \in X \colon x \perp a \text{ for all $a \in A$} \}. \]
The map ${\mathcal P}(X) \to {\mathcal P}(X) \komma A \mapsto A\cc$ is a closure operator on $X$. We call the closed subsets {\it orthoclosed} and we denote the collection of orthoclosed subsets by ${\mathcal C}(X, \perp)$. Endowed with the set-theoretical inclusion and the orthocomplementation $\c$, ${\mathcal C}(X, \perp)$ becomes a complete ortholattice. The ortholattice $({\mathcal C}(X, \perp); \cap, \vee, ^\perp, \emptyset, X)$ will be our primary tool to investigate $(X, \perp)$.

\begin{example} \label{ex:standard-example-2}
Let $(P(H), \perp)$ be the orthogonality space arising from the Hilbert space $H$ according to Example~\ref{ex:standard-example-1}. Then we may identify ${\mathcal C}(P(H), \perp)$ with the set ${\mathcal C}(H)$ of closed subspaces of $H$, endowed with the set-theoretical inclusion and the orthocomplementation.
\end{example}

In this paper, we will focus exclusively on the case of a finite rank. Our guiding example is, accordingly, the orthogonality space associated with a finite-dimensional Hilbert space. From now on, all orthogonality spaces are tacitly assumed to be of finite rank.

We will next introduce a condition on orthogonality spaces that mimics a key feature of the quantum-physical formalism. In quantum mechanics, a physical system is modelled by means of a Hilbert space and observables correspond to Boolean subalgebras of the lattice of its closed subspaces. We will require that orthogonality spaces possess substructures of the corresponding type.

\begin{definition} \label{def:normal}
An orthogonality space $(X, \perp)$ is called {\it normal} if, for any mutually orthogonal elements $e_1, \ldots, e_k$ of $X$, where $k \geq 1$, the subalgebra of the ortholattice ${\mathcal C}(X, \perp)$ generated by $\{e_1\}\cc, \ldots,$ $\{e_k\}\cc$ is Boolean.
\end{definition}

We may understand normality also as a coherence condition. By a subset $A$ of an orthogonality space to be orthogonal, we mean that $A$ consists of mutually orthogonal elements.

\begin{lemma} \label{lem:normal-orthogonality-space}
For an orthogonality space $(X, \perp)$, the following are equivalent:
\begin{enumerate}[{\rm(1)}]

\item $(X, \perp)$ is normal.

\item For any maximal orthogonal set $\{ e_1, \ldots, e_n \} \subseteq X$, there is a finite Boolean subalgebra of ${\mathcal C}(X, \perp)$ whose atoms are $\{ e_1 \}\cc, \ldots, \{ e_n \}\cc$.

\item For any maximal orthogonal set $\{ e_1, \ldots, e_n \} \subseteq X$ and any $1 \leq k < n$, if $f \perp e_1, \ldots, e_k$ and $g \perp e_{k+1}, \ldots, e_n$, then $f \perp g$.

\end{enumerate}
\end{lemma}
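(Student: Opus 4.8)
The plan is to prove the cycle of implications $(1)\Rightarrow(2)\Rightarrow(3)\Rightarrow(1)$, working throughout with the closure operator $A\mapsto A\cc$ and the basic facts about the ortholattice ${\mathcal C}(X,\perp)$. Before starting, I would record a couple of routine observations: if $\{e_1,\ldots,e_n\}$ is a \emph{maximal} orthogonal set, then $\{e_1,\ldots,e_n\}\c=\emptyset$, hence $\{e_1,\ldots,e_n\}\cc=X$, so that $\{e_1\}\cc,\ldots,\{e_n\}\cc$ join to $X$ in ${\mathcal C}(X,\perp)$; and for any orthogonal set, the atoms $\{e_i\}\cc$ are pairwise orthogonal in the ortholattice, since $e_i\perp e_j$ means $\{e_j\}\cc\subseteq\{e_i\}\c$.

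For $(1)\Rightarrow(2)$: apply normality to the maximal orthogonal set $\{e_1,\ldots,e_n\}$. The generated subalgebra $B$ is Boolean by hypothesis, and it is finite because it is generated by finitely many elements (a finitely generated Boolean algebra is finite). The elements $\{e_i\}\cc$ are pairwise orthogonal nonzero elements summing to the top $X$; I would argue that in a Boolean algebra a family of pairwise disjoint nonzero elements with join $1$, each of which is an atom of the generated subalgebra, must be exactly the set of atoms — more carefully, one shows each $\{e_i\}\cc$ is an atom of $B$ (any proper nonzero element below it would, together with the other $\{e_j\}\cc$, be an orthogonal family not summing correctly, contradicting that the $\{e_i\}\cc$ generate $B$ and are pairwise complementary within their join), and conversely every atom lies below some $\{e_i\}\cc$ since the join of all $\{e_i\}\cc$ is $X$.

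For $(2)\Rightarrow(3)$: given the maximal orthogonal set and the split at $k$, let $B$ be the finite Boolean subalgebra with atoms $\{e_1\}\cc,\ldots,\{e_n\}\cc$. Put $a=\{e_1\}\cc\vee\cdots\vee\{e_k\}\cc$ and $b=\{e_{k+1}\}\cc\vee\cdots\vee\{e_n\}\cc$; since these are complementary joins of disjoint atom-sets in $B$, we have $b=a\c$ (the orthocomplement computed in ${\mathcal C}(X,\perp)$, which agrees with the Boolean complement in $B$ because $B$ is a subortholattice). Now $f\perp e_1,\ldots,e_k$ says $f\in\{e_1,\ldots,e_k\}\c=(\{e_1\}\cc\vee\cdots\vee\{e_k\}\cc)\c=a\c=b$, and similarly $g\perp e_{k+1},\ldots,e_n$ gives $g\in a=b\c$; hence $g\in b\c$ while $f\in b$, so $f\perp g$. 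The only slightly delicate point is the identity $\{e_1,\ldots,e_k\}\c=(\{e_1\}\cc\vee\cdots\vee\{e_k\}\cc)\c$, which follows from $\{e_1,\ldots,e_k\}\c=\{e_1\}\c\cap\cdots\cap\{e_k\}\c$ together with De Morgan for the orthocomplementation.

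For $(3)\Rightarrow(1)$, which I expect to be the main obstacle: given mutually orthogonal $e_1,\ldots,e_m$, extend them to a maximal orthogonal set $e_1,\ldots,e_m,e_{m+1},\ldots,e_n$ (possible since the rank is finite). The task is to show the subalgebra $B$ of ${\mathcal C}(X,\perp)$ generated by $\{e_1\}\cc,\ldots,\{e_m\}\cc$ is Boolean. The key is to establish that these $m$ generators, together with the ``remainder'' $r=\{e_{m+1},\ldots,e_n\}\cc=\{e_1,\ldots,e_m\}\c$ (using maximality as in the preliminary remarks, plus De Morgan), behave like a partition of unity: I would use condition (3) repeatedly to show that for every subset $I\subseteq\{1,\ldots,m\}$ the element $\bigvee_{i\in I}\{e_i\}\cc$ has orthocomplement $\bigvee_{i\notin I}\{e_i\}\cc\vee r$ — the inclusion $\supseteq$ is clear, and for $\subseteq$ one takes $f$ orthogonal to all $e_i$ with $i\in I$ and applies (3) with the split $\{e_i:i\in I\}$ versus its complement among $\{e_1,\ldots,e_n\}$ to conclude $f$ is orthogonal to... — more precisely, one shows that $f\in(\bigvee_{i\in I}\{e_i\}\cc)\c$ forces $f\in(\bigvee_{i\notin I}\{e_i\}\cc\vee r)\cc$, which unwinds to an orthogonality statement handled by (3). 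Granting that the sublattice generated by the $\{e_i\}\cc$ inside ${\mathcal C}(X,\perp)$ is isomorphic to the lattice of subsets of $\{1,\ldots,m\}$ via $I\mapsto\bigvee_{i\in I}\{e_i\}\cc$ — distributivity being the thing to verify, and it follows once joins and meets of these elements are computed to match unions and intersections of index sets, which again reduces to instances of (3) — the subalgebra $B$ is this finite distributive lattice closed under a complementation, hence Boolean. The real work, and where I would spend the most care, is the bookkeeping that turns (3) into the distributive identities $\big(\bigvee_{i\in I}\{e_i\}\cc\big)\cap\big(\bigvee_{i\in J}\{e_i\}\cc\big)=\bigvee_{i\in I\cap J}\{e_i\}\cc$; a clean way is to first prove $\big(\bigvee_{i\in I}\{e_i\}\cc\big)\c=\bigcap_{i\in I}\{e_i\}\c$ has the stated form and then use that an ortholattice in which the relevant complements are Boolean complements is automatically distributive on that sublattice.
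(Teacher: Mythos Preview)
Your proposal is correct and follows essentially the same cycle $(1)\Rightarrow(2)\Rightarrow(3)\Rightarrow(1)$ as the paper, with the same key idea in each step. The only notable difference is in $(3)\Rightarrow(1)$: the paper avoids your detour through distributivity by directly showing, for any $\emptyset\neq A\subsetneq E$, that $A\c=(E\setminus A)\cc$ (one inclusion is obvious, the other is exactly an application of (3)), which immediately makes $A\mapsto A\cc$ an ortholattice homomorphism from the power set of $E$ into ${\mathcal C}(X,\perp)$ and hence forces the image---and any subalgebra of it---to be Boolean.
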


\begin{proof}
(1) $\Rightarrow$ (2): Let $(X, \perp)$ be normal and let $\{ e_1, \ldots, e_n \}$ be a maximal orthogonal subset of $X$. By normality, the subalgebra $\mathcal B$ of ${\mathcal C}(X, \perp)$ generated by $\{ e_1 \}\cc, \ldots,$ $\{ e_n \}\cc$ is Boolean. Moreover, $\{ e_1 \}\cc, \ldots, \{ e_n \}\cc$ are mutually orthogonal elements and we have $\{ e_1 \}\cc \vee \ldots \vee \{ e_n \}\cc = \{ e_1, \ldots, e_n \}\cc = \emptyset\c = X$. Thus $\mathcal B$ is a finite Boolean subalgebra of ${\mathcal C}(X, \perp)$, its atoms being $\{ e_1 \}\cc, \ldots,$ $\{ e_n \}\cc$.

(2) $\Rightarrow$ (3): Let $\{ e_1, \ldots, e_n \}$ be a maximal orthogonal subset of $X$ and assume that $\{ e_1 \}\cc, \ldots, \{ e_n \}\cc$ are the atoms of a finite Boolean subalgebra of ${\mathcal C}(X, \perp)$. Let $1 \leq k < n$. Then $f \perp e_1, \ldots, e_k$ means $f \in \{ e_1, \ldots, e_k \}\c = \{ e_{k+1}, \ldots, e_n \}\cc$, and similarly, $g \perp e_{k+1}, \ldots, e_n$ means $g \in \{ e_1, \ldots, e_k \}\cc$. If both $f \perp e_1, \ldots, e_k$ and $g \perp e_{k+1}, \ldots, e_n$ holds, we hence conclude $f \perp g$.

(3) $\Rightarrow$ (1): Let $D = \{ e_1, \ldots, e_k \}$, $k \geq 1$, be an orthogonal subset of $X$. Then we may extend $D$ to a maximal orthogonal subset $E = \{e_1, \ldots, e_n \}$ of $X$, where $n \geq k$. For any $A \subseteq E$, we have $\bigvee \{ \{ e \}\cc \colon e \in A \} = A\cc$; for any $A, B \subseteq E$, we have $A\cc \vee B\cc = (A \cup B)\cc$; and $E\cc = X$. Let $\emptyset \neq A \subsetneq E$. Then $(E \setminus A)\cc \subseteq A\c$. Moreover, if $f \in A\c$ and $g \in (E \setminus A)\c$, we have by assumption $f \perp g$; hence $f \in (E \setminus A)\cc$. We conclude that $A\cc = (E \setminus A)\c$. We have shown that $\{ e_1 \}\cc, \ldots, \{ e_n \}\cc$ generate a Boolean subalgebra of ${\mathcal C}(X, \perp)$; hence so do $\{ e_1 \}\cc, \ldots, \{ e_k \}\cc$.
\end{proof}

The following notation will be useful. Let $e_1, \ldots, e_k$ be mutually orthogonal elements of a normal orthogonality space $(X, \perp)$. Then the closure of $\{ \{e_1\}\cc, \ldots,$ $\{e_k\}\cc \}$ under joins in ${\mathcal C}(X, \perp)$ has the structure of a Boolean algebra, whose top element is $\{ e_1, \ldots, e_k \}\cc$. We will denote this Boolean algebra by ${\mathcal B}(e_1, \ldots, e_k)$.

The property of normality applies to our canonical example. We write $\lin{x_1, \ldots, x_k}$ for the linear hull of non-zero vectors $x_1, \ldots, x_k$ of a linear space.

\begin{example}
Let $x_1, \ldots, x_k$, $k \geq 1$, be mutually orthogonal non-zero vectors of a Hilbert space $H$. Then the subalgebra of ${\mathcal C}(H)$ generated by $\lin{x_1}, \ldots, \lin{x_k}$ consists of the joins of subspaces among $\lin{x_1}, \ldots, \lin{x_k}, \lin{x_1, \ldots, x_k}\c$. This algebra is Boolean and we conclude that $(P(H), \perp)$ is normal.
\end{example}

For later considerations, we introduce a further, particularly simple example.

\begin{example} \label{ex:n-element-set}
For $n \in \Naturals \setminus \{0\}$, we denote by $\n$ an $n$-element set and we consider the binary relation $\neq$ on $\n$. Then $(\n, \neq)$ is an orthogonality space and ${\mathcal C}(\n, \neq)$ is the powerset of $\n$. Since ${\mathcal C}(\n, \neq)$ is Boolean, we have that $(\n, \neq)$ is normal.
\end{example}

In general, however, an orthogonality space need not be normal. The subsequent examples of finite orthogonality spaces will be graphically depicted as follows: the elements of the space are represented by points, and two elements are orthogonal if the points are connected by a straight line. For instance, in the Example~\ref{ex:Finite-example-1} below we have that $a, b, c$ are mutually orthogonal and moreover $d \perp a$ as well as $e \perp b, c$. We note that this representation might remind of Greechie diagrams. It must be kept in mind, however, that an element of an orthogonality space does not necessarily represent an atom of the associated ortholattice. In Example~\ref{ex:Finite-example-1}, for instance, $\{e\}\cc$ properly contains $\{a\}\cc$.

\begin{example} \label{ex:Finite-example-1}
Consider the orthogonality space $X = \{ a, b, c, d, e \}$ given by the following scheme:

\begin{center}
\includegraphics[width=0.15\textwidth]{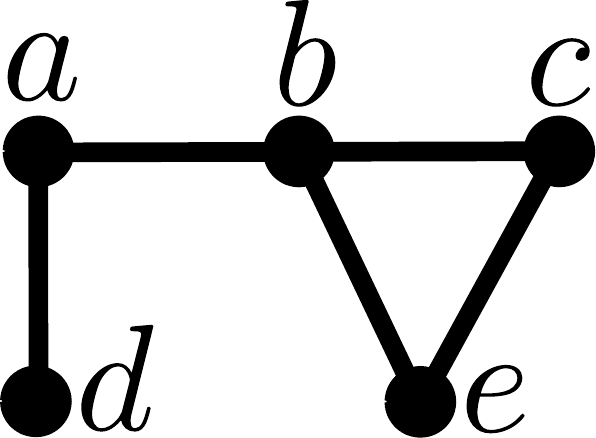}
\end{center}

$\{ a, b, c \}$ is a maximal orthogonal set. Furthermore, we have $\{ a \}^{\perp\perp} = \{ a \}$ and $\{ b \}\cc \vee \{ c \}\cc = \{ b, c \}\cc = \{ b, c \}$. Since $\{b, c\}\c = \{ a, e \}$, $X$ is not normal.
\end{example}

Given a normal orthogonality space $(X, \perp)$, we call an orthoclosed subset $A$ of $X$ together with the inherited orthogonality relation, which we usually still denote by $\perp$, a {\it subspace} of $(X, \perp)$.

The following proposition and example show that a subspace of a normal orthogonality space is not in general normal, but a subspace that is the closure of any maximal orthogonal subset is so.

\begin{proposition} \label{prop:normal-subspaces}
Let $(X, \perp)$ be a normal orthogonality space and let $A \in {\mathcal C}(X, \perp)$ be such that, for any maximal orthogonal subset $D$ of $A$, we have $D\cc = A$. Then the subspace $(A, \perp)$ is normal.
\end{proposition}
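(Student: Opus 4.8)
The plan is to verify condition~(3) of Lemma~\ref{lem:normal-orthogonality-space} for the subspace $(A, \perp)$, drawing on the fact that $(X, \perp)$ satisfies that same condition. One technical point to keep in mind is that the orthocomplement relevant to $(A, \perp)$ is the relative one, $B \mapsto B\c \cap A$ for $B \subseteq A$; but condition~(3) is phrased purely in terms of the orthogonality relation, which $A$ simply inherits, so this will not cause any trouble.

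Concretely, I would start with a maximal orthogonal subset $E = \{e_1, \ldots, e_n\}$ of $A$, fix $1 \leq k < n$, and take $f, g \in A$ with $f \perp e_1, \ldots, e_k$ and $g \perp e_{k+1}, \ldots, e_n$; the goal is $f \perp g$. First I would extend $E$ to a maximal orthogonal subset $\tilde E = \{e_1, \ldots, e_n, e_{n+1}, \ldots, e_m\}$ of the whole space $X$, which is possible since the rank is finite. The key step --- and the place where the hypothesis on $A$ is used --- is to observe that $f$ and $g$ are orthogonal to \emph{each} of the additional elements $e_{n+1}, \ldots, e_m$: by assumption $E\cc = A$, so $f, g \in E\cc$, while every $e_j$ with $j > n$ lies in $E\c$ because $\tilde E$ is an orthogonal set; hence $f \perp e_j$ and $g \perp e_j$ for all $j > n$.

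It then remains to feed this into Lemma~\ref{lem:normal-orthogonality-space}(3) for the maximal orthogonal set $\tilde E$ of $X$. Reindexing $\tilde E$ so that the elements $e_1, \ldots, e_k, e_{n+1}, \ldots, e_m$, to which $f$ is orthogonal, come first and the elements $e_{k+1}, \ldots, e_n$, to which $g$ is orthogonal, come last, we have split $\tilde E$ into two nonempty blocks, of sizes $k + (m-n) \geq 1$ and $n - k \geq 1$, with split index satisfying $1 \leq k + (m-n) < m$. Condition~(3) for $(X, \perp)$ then gives $f \perp g$, which is precisely condition~(3) for $(A, \perp)$; by Lemma~\ref{lem:normal-orthogonality-space}, $(A, \perp)$ is normal.

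I do not anticipate a genuine obstacle; the only subtlety is the middle step, where one must recognise that the hypothesis ``$D\cc = A$ for every maximal orthogonal $D \subseteq A$'' is exactly what controls how $f$ and $g$ behave against the elements of $\tilde E$ lying outside a maximal orthogonal set of $A$. Dropping this hypothesis would let such an element fail to be orthogonal to $f$ or to $g$, so the reduction to the normality of $X$ would collapse --- which is presumably the content of the example promised right after the proposition.
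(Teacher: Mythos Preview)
Your proof is correct and follows essentially the same route as the paper's: verify criterion~(3) of Lemma~\ref{lem:normal-orthogonality-space} for $A$ by extending a maximal orthogonal subset of $A$ to one of $X$, use the hypothesis $E\cc = A$ to get orthogonality of $f,g$ against the new elements, and then apply criterion~(3) for $X$. The paper's version is marginally tidier in that it only appends the new elements $e_{n+1},\ldots,e_m$ to $g$'s block (since $g \in A = E\cc$ forces $g \perp e_{n+1},\ldots,e_m$), keeping the split at index $k$ and avoiding any reindexing; your observation that $f$ is also orthogonal to these elements is true but not needed.
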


\begin{proof}
We shall use criterion (3) of Lemma~\ref{lem:normal-orthogonality-space}. Let $\{ e_1, \ldots, e_n \}$ be a maximal orthogonal subset of $A$, let $1 \leq k < n$, and assume that there are $f, g \in A$ such that $f \perp e_1, \ldots, e_k$ and $g \perp e_{k+1}, \ldots, e_n$. Then we may choose $e_{n+1}, \ldots, e_m \in X$ such that $\{ e_1, \ldots, e_m\}$ is a maximal orthogonal subset of $X$. By assumption, $A = \{ e_1, \ldots, e_n\}\cc$, hence $g \perp e_{n+1}, \ldots, e_m$. Thus we have $f \perp e_1, \ldots, e_k$ and $g \perp e_{k+1}, \ldots, e_m$ and the normality of $X$ implies $f \perp g$. We conclude that $(A, \perp)$ is normal.
\end{proof}

\begin{example} \label{ex:Finite-example-3}
Let $(X, \perp)$ be the $14$-element orthogonality space given as follows:

\begin{center}
\includegraphics[width=0.38\textwidth]{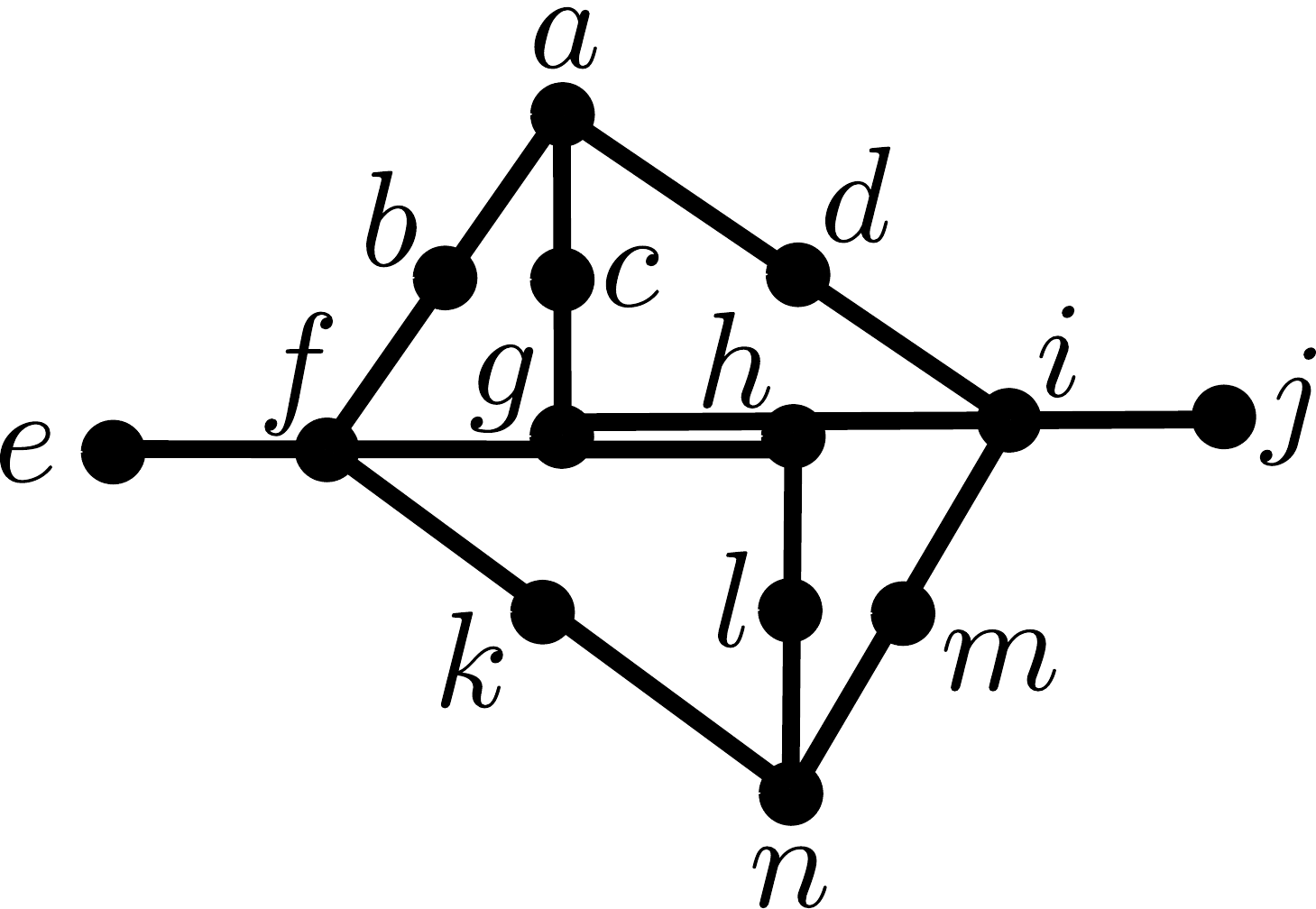}
\end{center}

(Here, the sets $\{ e, f, g, h \}$ and $\{ g, h, i, j \}$ are meant to be orthogonal; but, none of $e$ or $f$ is orthogonal to $i$ or $j$.)

By criterion {\rm (3)} of Lemma~\ref{lem:normal-orthogonality-space}, we may check that $X$ is normal. However, the subspace $\{f,i\}\c = \{a, g, h, n\}$ is not.
\end{example}

We might expect that normality of an orthogonality space is closely related to the orthomodularity of the associated ortholattice. This is indeed the case but the two properties do not coincide.

A {\it Dacey space} is an orthogonality space $(X, \perp)$ such that ${\mathcal C}(X, \perp)$ is an orthomodular lattice. We have the following characterisation of Dacey spaces \cite{Dac,Wlc}.

\begin{lemma} \label{lem:Dacey}
An orthogonality space $(X, \perp)$ is a Dacey space if and only if, for any $A \in {\mathcal C}(X, \perp)$ and any maximal orthogonal subset $D$ of $A$, we have that $D\cc = A$.
\end{lemma}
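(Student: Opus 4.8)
The plan is to base the argument on the well-known reformulation of the orthomodular law: a complete ortholattice is orthomodular if and only if, whenever $A \leq B$ and $B \wedge A\c = 0$, one has $A = B$. In the case of ${\mathcal C}(X, \perp)$, meets are intersections and the bottom element is $\emptyset$, so this reads as follows: for all $A, B \in {\mathcal C}(X, \perp)$ with $A \subseteq B$ and $B \cap A\c = \emptyset$, we have $A = B$. I would first record this equivalence; one implication is immediate, and for the other one applies the implication to the pair $A \vee (B \cap A\c) \subseteq B$, whose orthocomplemented meet with $B$ is $\emptyset$. Both directions of the lemma then follow quickly.

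For the ``only if'' direction, assume $(X, \perp)$ is a Dacey space, let $A \in {\mathcal C}(X, \perp)$, and let $D$ be a maximal orthogonal subset of $A$. From $D \subseteq A$ we get $D\cc \subseteq A\cc = A$. The key observation is that $A \cap D\c = \emptyset$: if some $x \in A$ were orthogonal to every element of $D$, then $x \notin D$ by irreflexivity, and hence $D \cup \{x\}$ would be an orthogonal subset of $A$ properly containing $D$, contradicting maximality. Since $A \cap (D\cc)\c = A \cap D\c = \emptyset$ and $D\cc \subseteq A$, the reformulated orthomodular law yields $D\cc = A$.

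For the ``if'' direction, assume that $D\cc = A$ whenever $A$ is orthoclosed and $D$ is a maximal orthogonal subset of $A$, and take orthoclosed $A \subseteq B$ with $B \cap A\c = \emptyset$; it suffices to show $A = B$. I would pick a maximal orthogonal subset $D_0$ of $A$ and extend it -- possible since the rank is finite -- to a maximal orthogonal subset $D$ of $B$. By hypothesis, $D_0\cc = A$ and $D\cc = B$. Every $x \in D \setminus D_0$ is orthogonal to all of $D_0$, hence lies in $D_0\c = (D_0\cc)\c = A\c$, while also $x \in D \subseteq B$; thus $D \setminus D_0 \subseteq B \cap A\c = \emptyset$, giving $D = D_0$ and therefore $B = D\cc = D_0\cc = A$. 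By the reformulation, ${\mathcal C}(X, \perp)$ is orthomodular, i.e.\ $(X, \perp)$ is a Dacey space.

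I do not expect a genuine obstacle: this is the classical Dacey characterisation, and once the orthomodular law is phrased in the ``$A \leq B$ and $B \wedge A\c = 0$ imply $A = B$'' form, the proof is pure bookkeeping. The only steps needing a moment's care are the extension of $D_0$ to a maximal orthogonal subset of the larger set $B$ (trivial in finite rank) and the use of the symmetry of $\perp$ together with $D_0\cc = A$ to place $D \setminus D_0$ inside $A\c$.
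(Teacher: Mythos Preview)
Your proof is correct. Note, however, that the paper does not actually supply its own proof of this lemma: it states the result and defers to the references \cite{Dac,Wlc}. Your argument, passing through the standard reformulation of orthomodularity (for $A \subseteq B$ in ${\mathcal C}(X,\perp)$, if $B \cap A\c = \emptyset$ then $A = B$), is precisely the classical route to the Dacey characterisation and is how the result is established in the cited sources; both directions are handled cleanly, and the finite-rank assumption makes the extension of $D_0$ to a maximal orthogonal subset of $B$ unproblematic.
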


\begin{example}
Let $H$ be a Hilbert space. Then ${\mathcal C}(H)$ is an orthomodular lattice and hence $(P(H), \perp)$ a Dacey space.
\end{example}

\begin{example}
By means of Lemma~\ref{lem:Dacey}, we observe that the orthogonality space $(X, \perp)$ from Example~\ref{ex:Finite-example-1} is not a Dacey space. Indeed, $A = \{ b, c, d \} \in {\mathcal C}(X, \perp)$, $\{ b, c \}$ is a maximal orthogonal subset of $A$, and $\{ b, c \}\cc = \{ b, c \} \subsetneq A$.
\end{example}

The following proposition and example show that the Dacey spaces form a strict subclass of the normal orthogonality spaces.

\begin{proposition} \label{prop:if-Dacey-then-normal}
A Dacey space is a normal orthogonality space.
\end{proposition}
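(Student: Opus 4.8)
The plan is to verify criterion (3) of Lemma~\ref{lem:normal-orthogonality-space}, using the characterisation of Dacey spaces from Lemma~\ref{lem:Dacey}. So suppose $(X,\perp)$ is a Dacey space, let $\{e_1,\ldots,e_n\}$ be a maximal orthogonal subset of $X$, fix $1\le k<n$, and suppose $f\perp e_1,\ldots,e_k$ and $g\perp e_{k+1},\ldots,e_n$. We must show $f\perp g$.

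First I would observe that $f\in\{e_1,\ldots,e_k\}\c$ and, since $\{e_{k+1},\ldots,e_n\}$ is an orthogonal subset contained in $\{e_1,\ldots,e_k\}\c$, consider $A=\{e_1,\ldots,e_k\}\c\in{\mathcal C}(X,\perp)$. The key step is to check that $\{e_{k+1},\ldots,e_n\}$ is in fact a \emph{maximal} orthogonal subset of $A$: any element of $A$ orthogonal to all of $e_{k+1},\ldots,e_n$ would be orthogonal to all of $e_1,\ldots,e_n$, hence by maximality of $\{e_1,\ldots,e_n\}$ would lie in $\{e_1,\ldots,e_n\}\cc$; but an element of this set that is also in $A=\{e_1,\ldots,e_k\}\c$ must already belong to $\{e_{k+1},\ldots,e_n\}$, since $e_{k+1},\ldots,e_n$ are atoms-below. (A little care is needed here: one argues that such an element is orthogonal to nothing new, i.e.\ it must coincide with one of the $e_i$; alternatively, simply note that adding it to $\{e_{k+1},\ldots,e_n\}$ together with $e_1,\ldots,e_k$ would give a larger orthogonal set than $\{e_1,\ldots,e_n\}$ unless it is already among the $e_i$.)

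Having established that $\{e_{k+1},\ldots,e_n\}$ is a maximal orthogonal subset of the orthoclosed set $A$, the Dacey property (Lemma~\ref{lem:Dacey}) yields $\{e_{k+1},\ldots,e_n\}\cc=A=\{e_1,\ldots,e_k\}\c$. Now $g\perp e_{k+1},\ldots,e_n$ says $g\in\{e_{k+1},\ldots,e_n\}\c$, while $f\in\{e_1,\ldots,e_k\}\c=\{e_{k+1},\ldots,e_n\}\cc$. Therefore $f\perp g$, which is exactly condition (3). By Lemma~\ref{lem:normal-orthogonality-space}, $(X,\perp)$ is normal.

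I expect the main obstacle to be the maximality argument in the second paragraph: one must be sure that no element of $A$ outside $\{e_{k+1},\ldots,e_n\}$ can be orthogonal to all of $e_{k+1},\ldots,e_n$, and this relies on combining membership in $A=\{e_1,\ldots,e_k\}\c$ with the maximality of $\{e_1,\ldots,e_n\}$ in $X$. Everything else is a direct unwinding of the definitions of $\c$ and $\cc$ together with the two cited lemmas.
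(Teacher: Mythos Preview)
Your argument is correct. The maximality step works cleanly via your ``alternative'': if $h \in A = \{e_1,\ldots,e_k\}\c$ is orthogonal to each of $e_{k+1},\ldots,e_n$, then $h$ is orthogonal to every $e_i$; by irreflexivity $h$ is distinct from every $e_i$, so $\{e_1,\ldots,e_n,h\}$ would be a strictly larger orthogonal subset of $X$, contradicting maximality. (Your first attempt, invoking $\{e_1,\ldots,e_n\}\cc$ and ``atoms-below'', is off track: being orthogonal to all $e_i$ places $h$ in $\{e_1,\ldots,e_n\}\c$, not in $\{e_1,\ldots,e_n\}\cc$, and this set is empty by maximality --- which is exactly the alternative argument. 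The caveat ``unless it is already among the $e_i$'' is also unnecessary, again by irreflexivity.) Once maximality in $A$ is established, Lemma~\ref{lem:Dacey} and the unwinding of $\c$ and $\cc$ go through exactly as you describe.

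The paper takes a different and shorter route. Since $(X,\perp)$ is Dacey, ${\mathcal C}(X,\perp)$ is orthomodular; the elements $\{e_i\}\cc$, $i=1,\ldots,k$, are pairwise orthogonal and therefore pairwise commuting, and in any orthomodular lattice a family of pairwise commuting elements generates a Boolean subalgebra (the paper cites \cite[Prop.~2.8]{BrHa}). Your approach trades this external piece of orthomodular lattice theory for a direct verification of criterion~(3) of Lemma~\ref{lem:normal-orthogonality-space} via the Dacey characterisation of Lemma~\ref{lem:Dacey}, making the proof entirely self-contained within the paper. The paper's version is terser but leans on a standard structural fact about orthomodular lattices; yours is more elementary and stays at the level of the orthogonality relation.
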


\begin{proof}
Let $(X, \perp)$ be a Dacey space and $\{ e_1, \ldots, e_k \}$ be an orthogonal subset of $X$. Then $\{ e_i \}\cc$, $i = 1, \ldots, k$, are pairwise orthogonal and hence pairwise commuting elements of the orthomodular lattice ${\mathcal C}(X, \perp)$. It follows that they generate a Boolean subalgebra \cite[Prop.~2.8]{BrHa}.
\end{proof}

\begin{example} \label{Finite-example-2}
Consider the following orthogonality space $(X, \perp)$:

\begin{center}
\includegraphics[width=0.22\textwidth]{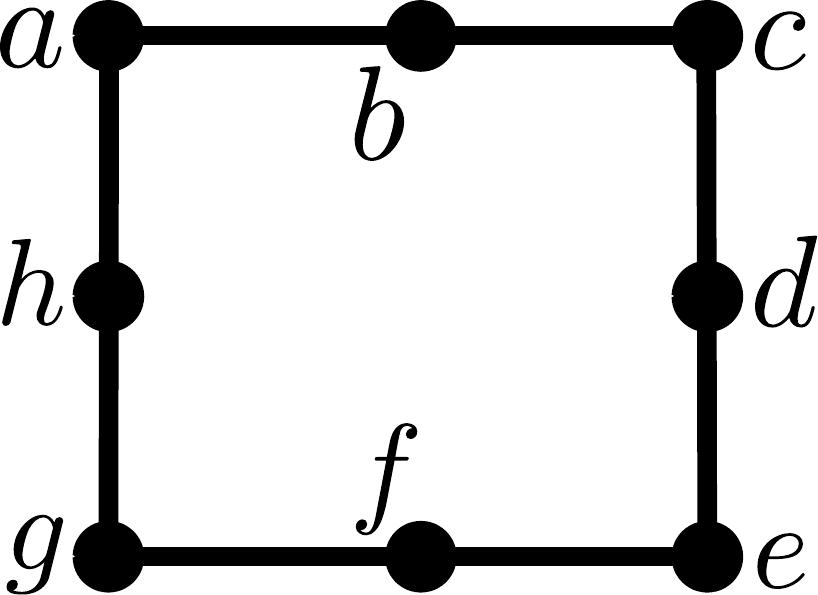}
\end{center}

The maximal orthogonal subsets are the elements along a straight line, e.g., $\{ a, b, c \}$. By criterion {\rm (3)} of Lemma~\ref{lem:normal-orthogonality-space}, we observe that $(X, \perp)$ is normal. We may also check that each subspace of $(X, \perp)$ is normal.

Moreover, the set $\{ a, e \}$ is orthoclosed. But $\{ a \}$ is a maximal orthogonal subset of $\{ a, e \}$ and $\{ a \}\cc = \{ a \}$. Hence by Lemma~\ref{lem:Dacey}, $(X, \perp)$ is not a Dacey space.
\end{example}

\section{The category \NOS\ of normal orthogonality spaces}
\label{sec:NOS}

We discuss in this section structure-preserving maps between orthogonality spaces. We shall introduce a category consisting of normal orthogonality spaces and investigate its basic properties.

For orthogonality spaces $X$ and $Y$, we call a map $\phi \colon X \to Y$ a {\it homomorphism} if $\phi$ is orthogonality-preserving, that is, if, for any $e, f \in X$, $e \perp f$ implies $\phi(e) \perp \phi(f)$. In this case, $\phi$ induces the map
\[ \bar\phi \colon {\mathcal C}(X, \perp) \to {\mathcal C}(Y, \perp)
\komma A \mapsto \{ \phi(a) \colon a \in A \}\cc. \]
Obviously, $\bar\phi$ is order- and orthogonality-preserving. It seems that in general, however, we cannot say much more about $\bar\phi$. We will be interested in homomorphisms fulfilling the following additional condition.

\begin{definition}
Let $\phi \colon X \to Y$ be a homomorphism between the normal orthogonality spaces $X$ and $Y$. We will call $\phi$ {\it normal} if, for any orthogonal set $e_1, \ldots, e_k \in X$, $k \geq 1$, $\bar\phi$ maps ${\mathcal B}(e_1, \ldots, e_k)$ isomorphically to ${\mathcal B}(\phi(e_1), \ldots, \phi(e_k))$.
\end{definition}

The following lemma might help to elucidate the condition of normality for homomorphisms.

\begin{lemma} \label{lem:normal}
Let $\phi \colon X \to Y$ be a homomorphism between normal orthogonality spaces. Then the following are equivalent:
\begin{enumerate}[{\rm(1)}]

\item $\phi$ is normal.

\item For any orthogonal subset $\{ e_1, \ldots, e_k \}$ of $X$, where $k \geq 0$, we have \linebreak $\bar\phi(\{ e_1, \ldots, e_k \}\cc) = \{ \phi(e_1), \ldots, \phi(e_k) \}\cc$.

\item For any orthogonal subset $\{ e_1, \ldots, e_k \}$ of $X$, where $k \geq 0$, we have \linebreak $\phi(\{ e_1, \ldots, e_k \}\cc) \subseteq \{ \phi(e_1), \ldots, \phi(e_k) \}\cc$.

\item For any maximal orthogonal subset $\{ e_1, \ldots, e_n \}$ of $X$, we have $\phi(X)\cc = \linebreak \{ \phi(e_1), \ldots, \phi(e_n) \}\cc$.

\item For any maximal orthogonal subset $\{ e_1, \ldots, e_n \}$ of $X$, we have $\phi(X) \subseteq \linebreak \{ \phi(e_1), \ldots, \phi(e_n) \}\cc$.

\end{enumerate}
\end{lemma}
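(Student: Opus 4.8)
The plan is to prove the chain of implications $(1)\Rightarrow(2)\Rightarrow(3)\Rightarrow(1)$ together with $(2)\Rightarrow(4)\Rightarrow(5)\Rightarrow(3)$, so that all five statements become equivalent. Throughout I would keep in mind the basic facts about $\bar\phi$: it is order-preserving, and for any orthogonal $\{e_1,\dots,e_k\}$ the element $\{e_1,\dots,e_k\}\cc$ is the top of ${\mathcal B}(e_1,\dots,e_k)$ while $\{\phi(e_1),\dots,\phi(e_k)\}\cc$ is the top of ${\mathcal B}(\phi(e_1),\dots,\phi(e_k))$. Note also that since $\phi$ is a homomorphism, $\phi(e_1),\dots,\phi(e_k)$ are indeed mutually orthogonal, so the target Boolean algebra is defined.

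For $(1)\Rightarrow(2)$: if $\bar\phi$ restricts to an isomorphism ${\mathcal B}(e_1,\dots,e_k)\to{\mathcal B}(\phi(e_1),\dots,\phi(e_k))$, then in particular it sends the top element $\{e_1,\dots,e_k\}\cc$ to the top element $\{\phi(e_1),\dots,\phi(e_k)\}\cc$, which is exactly (2); the case $k=0$ says $\bar\phi(X)=\emptyset\c$ on the $Y$ side, which holds since $\bar\phi(X)=\overline{\phi(X)}\c{}^\perp \supseteq$ \dots — more simply, $\bar\phi(\emptyset\c)=\bar\phi(X)$ and $\{\,\}\cc=\emptyset\c=Y$, and $\bar\phi(X)=\phi(X)\cc\subseteq Y$ with equality forced because $Y$ is the top; I would just note $\bar\phi$ is order-preserving and $X$ is the top of ${\mathcal C}(X,\perp)$, hmm, actually $\bar\phi(X)$ need not be $Y$ in general, so the $k=0$ instance of (2) is genuine content — it is the statement that $\overline{\phi(X)}=Y$, equivalently $\phi(X)\c=\emptyset$, which will have to be derived; I would instead get the $k=0$ case as a special case of (4), or simply observe that (2) for $k\geq 1$ plus $X=\{e\}\cc\vee\{e\}\c\cdots$ forces it. The cleanest route is probably to establish (4) first from (1) or (2) and feed it back. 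For $(2)\Rightarrow(3)$: immediate, since $\phi(A)\subseteq\overline{\phi(A)}=\bar\phi(A)$ for any $A$, so $\phi(\{e_1,\dots,e_k\}\cc)\subseteq\bar\phi(\{e_1,\dots,e_k\}\cc)=\{\phi(e_1),\dots,\phi(e_k)\}\cc$. Implications $(2)\Rightarrow(4)$ and $(3)\Rightarrow(5)$ are the special case where $\{e_1,\dots,e_k\}$ is a \emph{maximal} orthogonal subset, using $\{e_1,\dots,e_n\}\cc=X$ when the set is maximal (since $\{e_1,\dots,e_n\}\c=\emptyset$, as no element can be orthogonal to a maximal orthogonal set without enlarging it), hence $\{e_1,\dots,e_n\}\cc=\emptyset\c=X$, so $\bar\phi(X)=\overline{\phi(X)}=\phi(X)\cc$. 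Likewise $(4)\Rightarrow(5)$ is trivial.

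The substantive implications are $(5)\Rightarrow(3)$ and $(3)\Rightarrow(1)$, and I expect $(3)\Rightarrow(1)$ to be the main obstacle. For $(5)\Rightarrow(3)$: given an orthogonal $\{e_1,\dots,e_k\}$, extend it to a maximal orthogonal set $\{e_1,\dots,e_n\}$ of $X$. Here I would use normality of $X$ crucially (Lemma~\ref{lem:normal-orthogonality-space}(3)): take any $x\in\{e_1,\dots,e_k\}\cc$; I want $\phi(x)\perp\phi(e_i)$ for $i=k+1,\dots,n$ — but that is automatic since $x\perp e_i$ for those $i$ (as $x\in\{e_1,\dots,e_k\}\cc=\{e_{k+1},\dots,e_n\}\c$ by normality of $X$) and $\phi$ preserves $\perp$. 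Wait, that only gives $\phi(x)\in\{\phi(e_{k+1}),\dots,\phi(e_n)\}\c$, not the complement I want. Let me reconsider: I want $\phi(x)\in\{\phi(e_1),\dots,\phi(e_k)\}\cc$. By (5), $\phi(x)\in\phi(X)\subseteq\{\phi(e_1),\dots,\phi(e_n)\}\cc$. Now I need to push $\phi(x)$ into the smaller double-complement. Since $\phi(x)\perp\phi(e_{k+1}),\dots,\phi(e_n)$ and $\{\phi(e_1),\dots,\phi(e_n)\}$ is orthogonal, in a \emph{normal} space $Y$ the Boolean algebra ${\mathcal B}(\phi(e_1),\dots,\phi(e_n))$ has $\{\phi(e_1),\dots,\phi(e_k)\}\cc$ as the complement (within that Boolean algebra, relative to the top $\{\phi(e_1),\dots,\phi(e_n)\}\cc$) of $\{\phi(e_{k+1}),\dots,\phi(e_n)\}\cc$; so being below the top and orthogonal to $\{\phi(e_{k+1}),\dots,\phi(e_n)\}\cc$ forces $\phi(x)\in\{\phi(e_1),\dots,\phi(e_k)\}\cc$. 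This uses normality of the target and the structure of ${\mathcal B}$. For $(3)\Rightarrow(1)$: fix orthogonal $\{e_1,\dots,e_k\}$ in $X$. The elements of ${\mathcal B}(e_1,\dots,e_k)$ are exactly the $\{e_i : i\in S\}\cc$ together with $\{e_i : i\in S\}\cc\vee\{e_1,\dots,e_k\}\c$ for $S\subseteq\{1,\dots,k\}$ — I would describe it concretely as in the earlier Hilbert-space example, then show $\bar\phi$ of the join of atoms is the join of the images, using $\bar\phi(A\vee B)=\bar\phi(A)\vee\bar\phi(B)$ which holds for the join-generated elements (this needs a small argument: $\bar\phi$ preserves joins of the specific form $\{e_i:i\in S\}\cc$ because these are closures of finite sets $\phi$ is defined on pointwise), and then show via (3) (and the analogous lower bound $\{\phi(e_i):i\in S\}\cc\subseteq\bar\phi(\{e_i:i\in S\}\cc)$, which is automatic) that $\bar\phi(\{e_i:i\in S\}\cc)=\{\phi(e_i):i\in S\}\cc$ for \emph{every} $S$, not just $S=\{1,\dots,k\}$ — and here I'd apply (3) to the orthogonal subset $\{e_i:i\in S\}$ directly. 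The main work is then checking that this bijective assignment of atoms to atoms, extended to joins, is actually a well-defined ortholattice isomorphism ${\mathcal B}(e_1,\dots,e_k)\to{\mathcal B}(\phi(e_1),\dots,\phi(e_k))$: injectivity of the atom map (two distinct atoms $\{e_i\}\cc$, $\{e_j\}\cc$ go to distinct atoms $\{\phi(e_i)\}\cc$, $\{\phi(e_j)\}\cc$ — if they coincided, then $\phi(e_i)\notperp\phi(e_j)$, contradicting... hmm, actually $\{\phi(e_i)\}\cc=\{\phi(e_j)\}\cc$ does not contradict orthogonality-preservation directly, since $\phi(e_i)$ could equal $\phi(e_j)$; but if $\phi(e_i)=\phi(e_j)$ with $e_i\perp e_j$ then $\phi(e_i)\perp\phi(e_i)$, impossible by irreflexivity — so the atom map is injective), surjectivity onto atoms (by construction), and preservation of $\vee$ and $\c$ (the latter reduces to: the complement of $\{e_i:i\in S\}\cc$ in ${\mathcal B}(e_1,\dots,e_k)$ is $\{e_i:i\notin S\}\cc\vee\{e_1,\dots,e_k\}\c$ and $\bar\phi$ sends this to the corresponding complement, using the already-established behaviour on the generating family). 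I expect the bookkeeping around the "extra" bottom atom $\{e_1,\dots,e_k\}\c$ of ${\mathcal B}$ — which $\phi$ is not defined on pointwise — to be the fiddly part; the key is that $\{e_1,\dots,e_k\}\c$ is the Boolean complement of $\{e_1,\dots,e_k\}\cc$, so its image under $\bar\phi$ is pinned down once we know $\bar\phi$ is an isomorphism on the rest, or one argues directly that $\bar\phi$ preserves the relevant complementation using (3) applied after extending to a maximal orthogonal set.
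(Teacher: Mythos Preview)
Your overall implication scheme matches the paper's, and your plan for the substantive step $(5)\Rightarrow(3)$ is right in spirit: extend $\{e_1,\dots,e_k\}$ to a maximal orthogonal set in $X$, use (5), and then use normality of $Y$. One point to tighten: $\{\phi(e_1),\dots,\phi(e_n)\}$ need not be \emph{maximal} in $Y$, so appealing to the Boolean structure of ${\mathcal B}(\phi(e_1),\dots,\phi(e_n))$ alone does not immediately give ``below the top and orthogonal to one summand implies in the other summand''; the paper (and you, implicitly) must extend $\{\phi(e_1),\dots,\phi(e_n)\}$ to a maximal orthogonal subset $F$ of $Y$ and then invoke Lemma~\ref{lem:normal-orthogonality-space}(3) for $Y$.

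Two confusions are making your $(3)\Rightarrow(1)$ look much harder than it is. First, the $k=0$ case of (2) is trivial: $\emptyset\c=X$, hence $\emptyset\cc=X\c=\emptyset$, so (2) for $k=0$ reads $\bar\phi(\emptyset)=\emptyset$, not $\bar\phi(X)=Y$. Second, and more importantly, you have misread the definition of ${\mathcal B}(e_1,\dots,e_k)$. In the paper it is \emph{not} the sub-ortholattice of ${\mathcal C}(X,\perp)$ generated by the $\{e_i\}\cc$ (which would indeed contain the ``extra atom'' $\{e_1,\dots,e_k\}\c$); it is merely the closure of $\{\{e_1\}\cc,\dots,\{e_k\}\cc\}$ under joins, a $2^k$-element Boolean algebra whose elements are exactly the $A\cc$ for $A\subseteq\{e_1,\dots,e_k\}$, with its own top $\{e_1,\dots,e_k\}\cc$. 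With this in hand, $(2)\Rightarrow(1)$ is a one-liner: apply (2) to every subset $A\subseteq\{e_1,\dots,e_k\}$ to get $\bar\phi(A\cc)=\phi(A)\cc$, which is precisely the required bijection ${\mathcal B}(e_1,\dots,e_k)\to{\mathcal B}(\phi(e_1),\dots,\phi(e_k))$ (injectivity on atoms is your irreflexivity argument). The ``fiddly part around the extra bottom atom'' you anticipate simply does not arise.
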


\begin{proof}
(1) $\Rightarrow$ (2): Let $\phi$ be normal and let $\{ e_1, \ldots, e_k \} \subseteq X$ be orthogonal. Then $\bar\phi$ maps the top element of ${\mathcal B}(e_1, \ldots, e_k)$ to the top element of ${\mathcal B}(\phi(e_1), \ldots, \phi(e_k))$, that is, $\bar\phi(\{ e_1, \ldots, e_k \}\cc) = \{ \phi(e_1), \ldots, \phi(e_k) \}\cc$.

(2) $\Rightarrow$ (1): Let (2) hold and let $\{ e_1, \ldots, e_k \} \subseteq X$ be orthogonal. Recall that the Boolean algebra ${\mathcal B}(e_1, \ldots, e_k)$ consists of the elements $A\cc \in {\mathcal C}(X, \perp)$, where $A \subseteq \{e_1, \ldots e_k\}$. By assumption, $\bar\phi(A\cc) = (\phi(A))\cc$. Thus $\bar\phi$ establishes an isomorphism between ${\mathcal B}(e_1, \ldots, e_k)$ and ${\mathcal B}(\phi(e_1), \ldots, \phi(e_k))$.

The equivalence of (2) and (3) as well as the equivalence of (4) and (5) are clear. Moreover, (2) clearly implies (4). We conclude the proof by showing that (5) implies (3).

Assume that (5) holds. Let $\{ e_1, \ldots, e_k \}$ be an orthogonal subset of $X$. We extend it to a maximal orthogonal set $E = \{ e_1, \ldots, e_k, e_{k+1}, \ldots, e_m \}$. Furthermore, $f_1 = \phi(e_1), \ldots, f_m = \phi(e_m)$ are pairwise orthogonal elements of $Y$. We extend $\phi(E)$ to a maximal orthogonal subset $F$ of $Y$.

Let $A \subseteq E$. We shall show that $\phi(A\cc) \subseteq \phi(A)\cc$, then in particular (3) will follow. As $\phi$ is orthogonality-preserving, $\phi(A\cc) \perp \phi(E \setminus A)\cc$. Moreover, by assumption, $\phi(A\cc) \subseteq \phi(X) \subseteq \phi(E)\cc \perp (F \setminus \phi(E))\cc$. It follows $\phi(A\cc) \perp \phi(E \setminus A)\cc \vee (F \setminus \phi(E))\cc = (F \setminus \phi(A))\cc$ and hence, by the normality of $Y$, $\phi(A\cc) \subseteq (F \setminus \phi(A))\c = \phi(A)\cc$.
\end{proof}

We observe that normal homomorphisms are, in a restricted sense, linearity-pre\-serv\-ing.

\begin{lemma} \label{lem:normal-implies-line-preserving}
Let $X$ and $Y$ be normal orthogonality spaces and let $\phi \colon X \to Y$ be a normal homomorphism. Let $e, f \in X$ be such that $e \perp f$. If $g \in \{ e, f \}\cc$, then $\phi(g) \in \{ \phi(e), \phi(f) \}\cc$.
\end{lemma}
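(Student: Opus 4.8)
The statement to prove is Lemma~\ref{lem:normal-implies-line-preserving}: if $\phi$ is a normal homomorphism and $e \perp f$, then $g \in \{e,f\}\cc$ implies $\phi(g) \in \{\phi(e),\phi(f)\}\cc$.

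My plan is to reduce this directly to characterisation (3) of Lemma~\ref{lem:normal}, which says that for any orthogonal subset $\{e_1,\ldots,e_k\}$ of $X$ we have $\phi(\{e_1,\ldots,e_k\}\cc) \subseteq \{\phi(e_1),\ldots,\phi(e_k)\}\cc$. Since $e \perp f$, the pair $\{e,f\}$ is an orthogonal subset of $X$ of size $k=2$, so this characterisation applies verbatim: $\phi(\{e,f\}\cc) \subseteq \{\phi(e),\phi(f)\}\cc$. If $g \in \{e,f\}\cc$, then $\phi(g) \in \phi(\{e,f\}\cc) \subseteq \{\phi(e),\phi(f)\}\cc$, which is exactly the conclusion.

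So the whole proof is essentially one line: apply Lemma~\ref{lem:normal}, equivalence of (1) and (3), to the two-element orthogonal set $\{e,f\}$. The only thing worth spelling out is why $\{\phi(e),\phi(f)\}\cc$ is the right target — here the identification of $\{\phi(e),\phi(f)\}\cc$ with the top element of ${\mathcal B}(\phi(e),\phi(f))$ is implicit but not needed for the argument; the set-theoretic containment in (3) already delivers the claim. One might alternatively phrase it using (2) and note that $\phi(g) \in \phi(\{e,f\}\cc) \subseteq \bar\phi(\{e,f\}\cc) = \{\phi(e),\phi(f)\}\cc$, using that $A \subseteq A\cc$ for any $A$, hence $\phi(A\cc) \subseteq (\phi(A\cc))\cc = \bar\phi(A\cc)$.

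There is essentially no obstacle here — the lemma is an immediate corollary of Lemma~\ref{lem:normal}, recorded separately because it is the geometrically meaningful consequence (a normal homomorphism respects "collinearity" of $g$ with an orthogonal pair $e,f$, i.e. respects the relation "$g$ lies on the line through $e$ and $f$" in the sense encoded by $\{e,f\}\cc$). The proof I would write is:

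\begin{proof}
Since $e \perp f$, the set $\{ e, f \}$ is an orthogonal subset of $X$. By the equivalence of (1) and (3) in Lemma~\ref{lem:normal}, it follows that $\phi(\{ e, f \}\cc) \subseteq \{ \phi(e), \phi(f) \}\cc$. Hence, if $g \in \{ e, f \}\cc$, then $\phi(g) \in \{ \phi(e), \phi(f) \}\cc$.
\end{proof}
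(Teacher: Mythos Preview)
Your proof is correct and is essentially identical to the paper's own argument, which simply records that the assertion holds by Lemma~\ref{lem:normal}, property~(3). Your explicit spelling out of the containment $\phi(\{e,f\}\cc) \subseteq \{\phi(e),\phi(f)\}\cc$ for the two-element orthogonal set $\{e,f\}$ is exactly what the paper leaves implicit.
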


\begin{proof}
The assertion holds by Lemma~\ref{lem:normal}, property (3).
\end{proof}

An {\it automorphism} of an orthogonality space $(X, \perp)$ is a bijection $\phi \colon X \to X$ such that, for any $e, f \in X$, $e \perp f$ if and only if $\phi(e) \perp \phi(f)$. Automorphisms are always normal homomorphisms, in particular the identity is normal.

\begin{lemma} \label{lem:automorphisms-are-normal}
Let $X$ be a normal orthogonality space and let $\phi \colon X \to X$ be an automorphism. Then $\phi$ is normal.
\end{lemma}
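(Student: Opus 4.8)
The plan is to verify condition~(5) of Lemma~\ref{lem:normal}, since that is the cheapest of the equivalent characterisations of normality to check for a bijection. So let $\phi \colon X \to X$ be an automorphism and let $\{e_1, \ldots, e_n\}$ be a maximal orthogonal subset of $X$. First I would observe that, because $\phi$ is a bijection preserving and reflecting orthogonality, the image $\{\phi(e_1), \ldots, \phi(e_n)\}$ is again an orthogonal set, and it is maximal: if some $y \in X$ were orthogonal to all $\phi(e_i)$, then $\phi^{-1}(y)$ would be orthogonal to all $e_i$ (using that $\phi^{-1}$ is also an automorphism, hence orthogonality-preserving), forcing $\phi^{-1}(y) \in \{e_1, \ldots, e_n\}$ by maximality, so $y \in \{\phi(e_1), \ldots, \phi(e_n)\}$. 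Consequently $\{\phi(e_1), \ldots, \phi(e_n)\}$ is a maximal orthogonal set, whence $\{\phi(e_1), \ldots, \phi(e_n)\}\c = \emptyset$ and therefore $\{\phi(e_1), \ldots, \phi(e_n)\}\cc = \emptyset\c = X$.

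With that in hand the containment required by condition~(5) is immediate: $\phi(X) \subseteq X = \{\phi(e_1), \ldots, \phi(e_n)\}\cc$. By Lemma~\ref{lem:normal}, $(5) \Rightarrow (1)$, this shows that $\phi$ is normal. Strictly speaking one should also note at the outset that $\phi$ is a homomorphism between normal orthogonality spaces in the sense required for the definition of ``normal homomorphism'' to apply — but this is immediate since an automorphism of $X$ is in particular an orthogonality-preserving map $X \to X$ and $X$ is assumed normal.

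There is really no serious obstacle here; the only point that deserves a moment's care is the claim that the image of a maximal orthogonal set under an automorphism is again maximal, which is where the bijectivity of $\phi$ (so that $\phi^{-1}$ exists and is also orthogonality-preserving) is genuinely used — a mere homomorphism would not suffice. Everything else reduces to the trivial identities $\emptyset\c = X$ and $M\cc = X$ for $M$ a maximal orthogonal set, which were already exploited in the proof of Lemma~\ref{lem:normal-orthogonality-space}. An alternative route would be to check condition~(2) of Lemma~\ref{lem:normal} directly, using that $\phi$ restricts to a bijection of any maximal orthogonal set and hence $\bar\phi$ restricts to a bijection between the corresponding $\{\cdot\}\cc$-generated elements, but the argument via~(5) is shorter and I would present that one.
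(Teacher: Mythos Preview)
Your argument is correct. Verifying condition~(5) of Lemma~\ref{lem:normal} works exactly as you describe: the image under an automorphism of a maximal orthogonal set is again maximal (this is where you genuinely need that $\phi$ is a bijection reflecting orthogonality, as you note), hence its double complement is all of $X$, and the required containment $\phi(X) \subseteq X$ is trivial.

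The paper takes a different, more conceptual one-line route: it simply observes that $\bar\phi$ is an automorphism of the ortholattice ${\mathcal C}(X,\perp)$. The point is that a bijection $\phi$ with $e \perp f \Leftrightarrow \phi(e) \perp \phi(f)$ commutes with the complement operation on subsets, so $\phi$ carries orthoclosed sets bijectively to orthoclosed sets and $\bar\phi$ is an ortholattice automorphism; any such automorphism sends the sub-ortholattice generated by $\{e_1\}\cc,\ldots,\{e_k\}\cc$ isomorphically onto the one generated by $\{\phi(e_1)\}\cc,\ldots,\{\phi(e_k)\}\cc$, which is exactly the definition of normality. Your approach trades this structural observation for an explicit check of the weakest criterion in Lemma~\ref{lem:normal}; it is slightly longer but entirely self-contained and avoids even implicitly invoking that ortholattice automorphisms preserve generated subalgebras.
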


\begin{proof}
$\bar\phi$ is an automorphism of ${\mathcal C}(X,\perp)$.
\end{proof}

We see next that normal homomorphisms are closed under composition.

\begin{lemma}
Let $X$, $Y$, and $Z$ be normal orthogonality spaces and let $\phi \colon X \to Y$ and $\psi \colon Y \to Z$ be normal homomorphisms. Then also $\psi \circ \phi$ is a normal homomorphism.
\end{lemma}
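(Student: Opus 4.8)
The statement to prove is that the composite $\psi \circ \phi$ of two normal homomorphisms is again a normal homomorphism. The plan is to verify the two requirements separately: first that $\psi \circ \phi$ is a homomorphism at all, i.e.\ orthogonality-preserving, and then that it satisfies one of the equivalent normality conditions from Lemma~\ref{lem:normal}. The first part is immediate: if $e \perp f$ in $X$, then $\phi(e) \perp \phi(f)$ in $Y$ since $\phi$ is a homomorphism, and then $\psi(\phi(e)) \perp \psi(\phi(f))$ in $Z$ since $\psi$ is a homomorphism; so $\psi \circ \phi$ preserves $\perp$.

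For the normality part I would use criterion (3) of Lemma~\ref{lem:normal}, namely that for every orthogonal subset $\{e_1, \ldots, e_k\}$ of $X$ one has $(\psi\circ\phi)(\{e_1,\ldots,e_k\}\cc) \subseteq \{\psi(\phi(e_1)),\ldots,\psi(\phi(e_k))\}\cc$. Starting from an orthogonal set $\{e_1, \ldots, e_k\}$ in $X$, its image $\{\phi(e_1), \ldots, \phi(e_k)\}$ is orthogonal in $Y$ because $\phi$ is a homomorphism. Applying the normality of $\phi$ via condition (3) of Lemma~\ref{lem:normal} gives $\phi(\{e_1,\ldots,e_k\}\cc) \subseteq \{\phi(e_1),\ldots,\phi(e_k)\}\cc$. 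Applying $\psi$ to both sides and using monotonicity of $\psi$ as a set map, then the normality of $\psi$ applied to the orthogonal set $\{\phi(e_1),\ldots,\phi(e_k)\}$ (again condition (3)), yields
\[ \psi(\phi(\{e_1,\ldots,e_k\}\cc)) \subseteq \psi(\{\phi(e_1),\ldots,\phi(e_k)\}\cc) \subseteq \{\psi(\phi(e_1)),\ldots,\psi(\phi(e_k))\}\cc, \]
which is exactly condition (3) of Lemma~\ref{lem:normal} for $\psi\circ\phi$. Hence $\psi\circ\phi$ is normal.

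The only point requiring a moment's care — and the closest thing to an obstacle — is making sure the intermediate orthogonal set in $Y$ to which we apply $\psi$'s normality is genuinely orthogonal, which is why we invoke that $\phi$ is orthogonality-preserving before invoking normality of $\psi$; everything else is a two-step chase through the characterisation in Lemma~\ref{lem:normal}. One could equally phrase the argument using condition (2) (with equalities of $\bar{\cdot}$-images) rather than the inclusion form (3), but (3) keeps the bookkeeping minimal since it only involves the set-theoretic image maps $\phi, \psi$ directly and avoids having to track the closure operators through the composite. This completes the verification that \NOS\ is closed under composition, and together with Lemma~\ref{lem:automorphisms-are-normal} (giving identities as normal homomorphisms) it establishes that \NOS\ is indeed a category.
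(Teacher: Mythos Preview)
Your proof is correct and follows exactly the approach indicated in the paper: verify orthogonality-preservation directly, then establish normality via criterion~(3) of Lemma~\ref{lem:normal}. You have simply spelled out the two-step inclusion chase that the paper leaves implicit.
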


\begin{proof}
Clearly, $\psi \circ \phi$ is orthogonality-preserving. Moreover, the normality follows by means of property (3) in Lemma~\ref{lem:normal}.
\end{proof}

We define the category \NOS\ to consist of the normal orthogonality spaces (of finite rank) and the normal homomorphisms.

We first check whether an inclusion map between normal orthogonality spaces is normal. The following example shows that this is not in general the case.

\begin{example}\label{ex:inclusionnotnormal}
Consider again the orthogonality space $(X, \perp)$ from Example~\ref{Finite-example-2}, which is normal but not Dacey, and let $A = \{ a, e \}$. Then $A \in {\mathcal C}(X, \perp)$ and $(A, \emptyset)$ is a subspace of $(X,  \perp)$, which is normal. Let now $i_A \colon A \to X$ be the inclusion map. We have that  $\{ a \}$ is a maximal orthogonal subset of $A$ and
\[ i_A(A)\cc \;=\; \{a,e\}\cc \;=\; \{a,e\} \;\neq\;  \{ a \} \;=\; \{ a \}\cc \;=\; \{ i_A(a) \}\cc. \]
Hence, by Lemma~\ref{lem:normal}, property {\rm (4)}, $i_A$ is not normal.
\end{example}

\begin{theorem}
Let $(X,\perp)$ be a normal orthogonality space. Then $X$ is a Dacey space if and only if, for any $A \in {\mathcal C}(X, \perp)$, the subspace $(A, \perp)$ is normal and the inclusion map $\iota \colon A \to X$ is a morphism in \NOS.
\end{theorem}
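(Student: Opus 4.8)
The plan is to prove both directions using the characterisations already in hand, namely Lemma~\ref{lem:Dacey} (Dacey $\Leftrightarrow$ $D\cc = A$ for every maximal orthogonal $D$ of every $A \in {\mathcal C}(X,\perp)$), Proposition~\ref{prop:normal-subspaces} (the ``$D\cc = A$ for all maximal orthogonal $D \subseteq A$'' condition forces $(A,\perp)$ normal), and Lemma~\ref{lem:normal}, property~(4), which says that $\iota \colon A \to X$ is normal iff $\iota(X)\cc = \{\iota(e_1),\ldots,\iota(e_n)\}\cc$ for every maximal orthogonal subset $\{e_1,\ldots,e_n\}$ of $A$; here $\iota$ is the inclusion of $A$ into $X$, so this reads $A\cc = \{e_1,\ldots,e_n\}\cc$ inside ${\mathcal C}(X,\perp)$, i.e.\ $A = \{e_1,\ldots,e_n\}\cc$ since $A$ is orthoclosed.

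For the forward direction, assume $(X,\perp)$ is a Dacey space. Fix any $A \in {\mathcal C}(X,\perp)$. By Lemma~\ref{lem:Dacey}, every maximal orthogonal subset $D$ of $A$ satisfies $D\cc = A$. Feeding this into Proposition~\ref{prop:normal-subspaces} gives that $(A,\perp)$ is normal. For the morphism claim, one must be a little careful: a maximal orthogonal subset of $A$ in the subspace sense is the same as a maximal orthogonal subset of $A$ as a subset of $X$ (because orthogonality is inherited and $A$ is downward-closed under the relation in the obvious sense), so the very same equality $\{e_1,\ldots,e_n\}\cc = A$ is exactly the hypothesis of Lemma~\ref{lem:normal}(4) with $\phi = \iota$. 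Hence $\iota$ is a normal homomorphism, i.e.\ a morphism in \NOS.

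For the converse, assume that for every $A \in {\mathcal C}(X,\perp)$ the subspace $(A,\perp)$ is normal and $\iota \colon A \to X$ is a morphism in \NOS. I want to verify the Dacey condition of Lemma~\ref{lem:Dacey}: take any $A \in {\mathcal C}(X,\perp)$ and any maximal orthogonal subset $D = \{e_1,\ldots,e_n\}$ of $A$, and show $D\cc = A$. Since $\iota$ is normal, Lemma~\ref{lem:normal}(4) applied to $\iota$ and to the maximal orthogonal subset $D$ of $A$ yields $\iota(A)\cc = \{\iota(e_1),\ldots,\iota(e_n)\}\cc$, that is, $A\cc = D\cc$ in ${\mathcal C}(X,\perp)$; but $A\cc = A$ as $A$ is orthoclosed, so $D\cc = A$. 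As $A$ and $D$ were arbitrary, Lemma~\ref{lem:Dacey} gives that $(X,\perp)$ is a Dacey space. (The normality-of-subspaces hypothesis is not even needed for this direction, but it is part of the biconditional's statement and is genuinely needed — via Proposition~\ref{prop:normal-subspaces} — only to make sense of $\iota$ being a \NOS-morphism in the first place, since \NOS-morphisms are defined only between normal spaces.)

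The only real subtlety — and the step I'd flag as the main obstacle, though it is a small one — is the bookkeeping around what ``maximal orthogonal subset of the subspace $A$'' means and making sure it coincides with ``maximal orthogonal subset of $X$ contained in $A$'', together with checking that the inclusion map genuinely falls under the hypotheses of Lemma~\ref{lem:normal} (in particular that $\iota(A)\cc = A$). Once these identifications are spelled out, both directions are immediate applications of the three cited results, so the proof is essentially a matter of citing Lemma~\ref{lem:Dacey}, Proposition~\ref{prop:normal-subspaces}, and Lemma~\ref{lem:normal}(4) in the right order.
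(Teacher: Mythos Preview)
Your proposal is correct and follows essentially the same approach as the paper's proof: both directions are handled by invoking Lemma~\ref{lem:Dacey}, Proposition~\ref{prop:normal-subspaces}, and Lemma~\ref{lem:normal}(4) in the same way. Your parenthetical observation that the normality-of-subspaces hypothesis in the converse direction is needed only so that $\iota$ is a map between objects of \NOS\ is a nice clarifying remark that the paper leaves implicit.
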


\begin{proof}
Assume first that $X$ is a Dacey space. Let $A \in {\mathcal C}(X, \perp)$. By Lemma~\ref{lem:Dacey} and Proposition~\ref{prop:normal-subspaces}, $(A, \perp)$ is a normal subspace. Moreover, the inclusion map $\iota \colon A \to X \komma x \mapsto x$ is clearly orthogonality-preserving. Let $\{ e_1, \ldots, e_n \}$ be a maximal orthogonal subset of $A$. Then $A = \{e_1, \ldots, e_n\}\cc$ by Lemma~\ref{lem:Dacey}. By Lemma~\ref{lem:normal}, property (4), we conclude that $\iota$ is actually a normal homomorphism.

Conversely, assume that, for any $A \in {\mathcal C}(X, \perp)$, $(A, \perp)$ is normal and the inclusion map $\iota \colon A \to X \komma x \mapsto x$ is a morphism of \NOS. Let $\{ e_1, \ldots, e_n \}$ be a maximal orthogonal subset of some $A \in {\mathcal C}(X, \perp)$. Then again by Lemma~\ref{lem:normal}, property (4), we have that $\iota(A)\cc = \{ \iota(e_1), \ldots, \iota(e_n) \}\cc$, that is, $A = \{ e_1, \ldots, e_n \}\cc$. By Lemma~\ref{lem:Dacey}, we conclude that $X$ is a Dacey space.
\end{proof}

We note that for a normal orthogonality space to be a Dacey space, it is not enough to assume that all subspaces are normal. Indeed, Example~\ref{Finite-example-2} provides a counterexample.

\section{Hermitian spaces}
\label{sec:Hermitian-spaces}

We turn now our attention to orthogonality spaces arising from inner-product spaces. In this section, we compile the necessary background material.

We first consider linear spaces without any additional structure. We are interested in the representation of maps between projective spaces that preserve the collinearity of point triples. The most general results in this area are, to our knowledge, due to Faure \cite{Fau}. Here, we will follow the work of Machala \cite{Mach}. The reader is referred to either of these papers for more detailed information.

By an {\it sfield}, we mean a skew field (i.e., a division ring). Let $V$ be a linear space over an sfield $K$. We write $V\withoutzero = V \setminus \{0\}$ and in accordance with Example~\ref{ex:standard-example-1}, we define $P(V) = \{ \lin x \colon x \in V\withoutzero \}$ to be the projective space associated with $V$. For $x, y, z \in V\withoutzero$, we write $\ell(\lin x, \lin y, \lin z)$ if $\lin x, \lin y, \lin z$ are on a line of $P(V)$, that is, if $x, y, z$ are linearly dependent.

Let $V$ and $V'$ be linear spaces over the sfields $K$ and $K'$, respectively. We call a map $\phi \colon P(V) \to P(V')$ a {\it lineation} if:
\begin{itemize}

\item[\rm (L1)] For any $x, y, z \in V\withoutzero$, $\ell(\lin x, \lin y, \lin z)$ implies $\ell(\phi(\lin x), \phi(\lin y), \phi(\lin z))$.

\end{itemize}
Thus a lineation is a map between projective spaces that preserves the collinearity of point triples. Obviously, (L1) is equivalent to:
\begin{itemize}
\item[\rm (L1')] For any $x, y, z \in V\withoutzero$ such that $\phi(\lin x) \neq \phi(\lin y)$ and $\lin{z} \subseteq \lin{x} + \lin{y}$, we have $\phi(\lin z) \subseteq \phi(\lin x) + \phi(\lin y)$.
\end{itemize}
Thus a lineation can also be understood as follows: if the point $\lin z$ lies on the line through $\lin x$ and $\lin y$ and if $\lin x$ and $\lin y$ are not mapped to the same point, then $\phi(\lin z)$ is on the line through $\phi(\lin x)$ and $\phi(\lin y)$. It is natural to ask whether a lineation is induced by a suitable map between the underlying linear spaces.

Let $K$ be an sfield. We denote by $K\msg = K \setminus \{0\}$ the multiplicative group of $K$. A {\it valuation ring} $F_K$ of $K$ is a subring of $K$ such that, for each $\alpha \in K\msg$, at least one of $\alpha$ or $\alpha^{-1}$ is in $F_K$. In this case, the subgroup $U(F_K)$ of $K\msg$ consisting of the units of $F_K$ is called the {\it group of valuation units}. Obviously, $F_K$ is a local ring, $I_K = F_K \setminus U(F_K) = \{ \alpha \in F_K \colon \alpha = 0 \text{ or } \alpha^{-1} \notin F_K\}$ being its unique maximal left (right) ideal. Let $K'$ be a further sfield; then a ring homomorphism $\rho \colon F_K \to K'$ with kernel $I_K$ is called a {\it place} from $K$ to $K'$. Note that in this case, $\rho$ induces an embedding of the sfield $F_K/I_K$ into $K'$.

Let $V$ be a linear space over an sfield $K$. Let $F_K$ be a valuation ring of $K$ and let $F_V$ be a submodule of $V$ over $F_K$ such that any one-dimensional subspace of $V$ contains a non-zero element of $F_V$. Let $V'$ be a further linear space over an sfield $K'$. Let $\rho \colon F_K \to K'$ be a place from $K$ to $K'$ and let $A \colon F_V \to V'$ be such that (i)~any one-dimensional subspace of $V$ contains a vector in $F_V$ that $A$ does not map to $0$, (ii)~$A$ is additive, and (iii)~for any $x \in F_V$ and $\alpha \in F_K$, we have $A(\alpha x) = \rho(\alpha) A(x)$. Then $A$ is called a {\it generalised semilinear map} from $V$ to $V'$ w.r.t.\ the place $\rho$.

\begin{theorem} \label{thm:generalised-semilinear-map-induces-lineation}
Let $A \colon F_V \to V'$ be a generalised semilinear map between the linear spaces $V$ and $V'$. Then the prescription
\[ \phi_A \colon P(V) \to P(V') \komma \lin{x} \mapsto \lin{A(x)}, \quad\text{where $x \in F_V$ and $A(x) \neq 0$,} \]
defines a lineation.
\end{theorem}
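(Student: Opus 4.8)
The plan is to verify, in order, that $\phi_A$ is a well-defined map on all of $P(V)$ and then that it satisfies condition (L1). The whole argument reduces to the observation that a collinear triple of pairwise distinct points of $P(V)$ can always be put in the normal form $\lin x, \lin y, \lin{x + \gamma y}$ with a single scalar $\gamma \in K\msg$, so that only one scalar has to be pushed through the place.

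For well-definedness I would argue as follows. Condition (i) in the definition of a generalised semilinear map says precisely that every one-dimensional subspace of $V$ contains a vector of $F_V$ that $A$ does not send to $0$; hence $\phi_A(\lin x)$ makes sense for every $\lin x \in P(V)$. That it does not depend on the chosen representative follows by taking $x, x' \in F_V$ with $\lin x = \lin{x'}$ and $A(x), A(x') \neq 0$, writing $x' = \lambda x$ with $\lambda \in K\msg$, and using that $F_K$ is a valuation ring: at least one of $\lambda, \lambda^{-1}$ lies in $F_K$, and applying property (iii) either as $A(x') = \rho(\lambda) A(x)$ or as $A(x) = \rho(\lambda^{-1}) A(x')$ yields $\lin{A(x)} = \lin{A(x')}$, the relevant scalar being non-zero because neither image vanishes.

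For (L1), suppose $\ell(\lin x, \lin y, \lin z)$, that is, $x, y, z$ are linearly dependent. If two of the three points coincide, then so do two of their images and collinearity is trivial; so assume $\lin x, \lin y, \lin z$ are pairwise distinct. Then $\lin z \subseteq \lin x + \lin y$, the latter being two-dimensional, so $\lin z = \lin{x + \gamma y}$ for some $\gamma \in K\msg$, where I choose $x, y \in F_V$ with $A(x), A(y) \neq 0$, so that $\phi_A(\lin x) = \lin{A(x)}$ and $\phi_A(\lin y) = \lin{A(y)}$. If $\gamma \in F_K$, then $w := x + \gamma y \in F_V$ and $A(w) = A(x) + \rho(\gamma) A(y)$ by additivity and property (iii); thus $A(x), A(y), A(w)$ are linearly dependent, and since $\lin w = \lin z$ we are done if $A(w) \neq 0$, while if $A(w) = 0$ then $A(x) = -\rho(\gamma) A(y)$ with $\rho(\gamma) \neq 0$, forcing $\phi_A(\lin x) = \phi_A(\lin y)$, again a degenerate triple. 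If $\gamma \notin F_K$, then $\gamma^{-1} \in F_K$, and in fact $\gamma^{-1} \in I_K = \ker \rho$ since $(\gamma^{-1})^{-1} = \gamma \notin F_K$; rewriting $\lin z = \lin{\gamma^{-1} x + y}$ and setting $w' := \gamma^{-1} x + y \in F_V$, property (iii) gives $A(w') = \rho(\gamma^{-1}) A(x) + A(y) = A(y) \neq 0$, hence $\phi_A(\lin z) = \lin{A(y)} = \phi_A(\lin y)$, and the triple is again degenerate, hence collinear.

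I expect the one delicate point to be exactly this: the scalar $\gamma$ relating a representative of $\lin z$ to those of $\lin x$ and $\lin y$ need not lie in $F_K$, so property (iii) cannot be applied to it directly. The device that clears this obstacle is to rescale the representative of $\lin z$ by $\gamma^{-1}$, which then lies in the maximal ideal $I_K$ and is therefore annihilated by the place $\rho$, collapsing that case to one in which two image points coincide. The remaining matters — guaranteeing that the representatives picked for $\lin x$ and $\lin y$ have non-zero image and keeping track of the degenerate configurations where two of the three points are equal — are routine once condition (i) and the well-definedness of $\phi_A$ are in hand.
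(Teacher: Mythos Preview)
Your proof is correct and follows essentially the same route as the paper's: both arguments pick representatives of $\lin x$ and $\lin y$ in $F_V \setminus I_V$, use the valuation-ring property to arrange that the single scalar relating a representative of $\lin z$ to them lies in $F_K$, and then apply additivity together with property~(iii). The differences are only presentational: you normalise from the start to the form $x + \gamma y$ and handle the degenerate image configurations explicitly, whereas the paper keeps two scalars $\alpha, \beta$, splits on whether $\alpha^{-1}\beta$ or $\beta^{-1}\alpha$ lies in $F_K$, and disposes of degeneracies by assuming the images are pairwise distinct at the outset.
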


\begin{proof}[Sketch of proof; for full details see the proof of {\rm \cite[Satz 5]{Mach}}]
Each one-di\-men\-sion\-al subspace of $V$ contains by assumption an element $x \in F_V$ such that $A(x) \neq 0$. Moreover, let $y \in \lin x \cap F_V$ such that $A(y) \neq 0$. Then either $y = \alpha x$ or $x = \alpha y$ for some $\alpha \in F_K \setminus \{0\}$. In the former case, we have $A(y) = \rho(\alpha) A(x)$; in the latter case, we have $A(x) = \rho(\alpha) A(y)$. Here, $\rho$ is the place associated with $A$. It follows that $\lin{A(x)} = \lin{A(y)}$. We conclude that we can define $\phi_A$ as indicated.

Let now $x, y, z \in F_V$ such that $A(x), A(y), A(z) \neq 0$ and $\ell(\lin x, \lin y, \lin z)$. We have to show that $\ell(\phi_A(\lin x), \phi_A(\lin y), \phi_A(\lin z))$. We may assume that $\phi_A(\lin x)$, $\phi_A(\lin y)$, and $\phi_A(\lin z)$ are mutually distinct. Let $\alpha, \beta \in K$ be such that $z = \alpha x + \beta y$. Then $\alpha, \beta \neq 0$. Moreover, either $\alpha^{-1} \beta \in F_K$ or $\beta^{-1} \alpha \in F_K$. In the former case, we set $z' = \alpha^{-1} z = x + \alpha^{-1} \beta \, y$; then $z' \in F_V$ and $A(z') = A(x) + \rho(\alpha^{-1} \beta) A(y) \neq 0$ because $\phi_A(\lin x) = \lin{A(x)}$ and $\phi_A(\lin y) = \lin{A(y)}$ are distinct. Hence $\phi_A(\lin z) = \lin{A(z')} = \lin{A(x) + \rho(\alpha^{-1} \beta) A(y)} \subseteq \lin{A(x)} + \lin{A(y)} = \phi_A(\lin x) + \phi_A(\lin y)$ and the assertion follows. In the latter case, we set $z' = \beta^{-1} z$ and proceed similarly.
\end{proof}

Let $A \colon F_V \to V'$ be a generalised semilinear map between the linear spaces $V$ and $V'$. We will then write $I_V = \{ x \in F_V \colon A(x) = 0 \}$. Note that, for any $x \in F_V \setminus I_V$, we have $\lin x \cap F_V = F_K \cdot x$. Moreover, let $\alpha \in F_K$; then $\alpha x \in I_V$ if and only if $\alpha \in I_K$, or in other words, $\alpha x \in F_V \setminus I_V$ if and only if $\alpha \in U(F_K)$.

For a converse of Theorem~\ref{thm:generalised-semilinear-map-induces-lineation}, we need to take into account additional conditions. A lineation $\phi \colon P(V) \to P(V')$ is called {\it non-degenerate} if the following conditions hold:
\begin{itemize}
\item[\rm (L2)] For any linearly independent vectors $x, y \in V\withoutzero$, $\{ \phi(\lin z) \colon z \neq 0, \; z \in \lin{x,y} \}$ contains at least three elements.

\item[\rm (L3)] The image of $\phi$ is not contained in a $2$-dimensional subspace of $V'$.
\end{itemize}

We arrive at the main theorem of \cite{Mach}.

\begin{theorem} \label{thm:Faure}
Every non-degenerate lineation between projective spaces is, in the sense of Theorem~\ref{thm:generalised-semilinear-map-induces-lineation}, induced by a generalised semilinear map.
\end{theorem}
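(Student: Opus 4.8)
The plan is to reduce Theorem~\ref{thm:Faure} to the classical fundamental theorem of projective geometry for \emph{collineations}, using non-degeneracy to upgrade the lineation $\phi$ step by step. The key idea is that conditions (L2) and (L3) are exactly what is needed to ensure that $\phi$ behaves like a classical morphism on a large enough substructure, namely the span of the image.

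First I would set $W$ to be the subspace of $V'$ spanned by the image of $\phi$; by (L3), $\dim W \geq 3$. I would show that $\phi$, viewed as a map into $P(W)$, is surjective onto the \emph{lines} of $P(W)$ in the following sense: whenever $\lin x, \lin y$ are distinct points of $P(V)$ with $\phi(\lin x) \neq \phi(\lin y)$, condition (L1') says $\phi$ maps the line through them into the line through their images, and (L2) guarantees this image is not a single point or a pair but a genuine line (at least three points). So the image of every projective line that is not collapsed is a full projective line of $P(W)$. The next step is to verify that $\phi$ does not collapse too much: using (L3) one finds three points $\lin{x_1}, \lin{x_2}, \lin{x_3}$ whose images span $W$, hence are in ``general position,'' and then (L2) applied along the relevant lines propagates non-collapsing to a spanning set, so that after discarding a possible ``kernel direction'' $\phi$ restricts to an injective collineation-like map on a complementary subspace.

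The main technical work — and the expected obstacle — is the passage from ``preserves collinearity of triples'' to the reconstruction of the additive, semilinear data. Concretely, one fixes a frame $\lin{x_0}, \dots, \lin{x_n}$ in the domain mapped to a frame in $P(W)$, uses the lineation property to define coordinate-wise how $\phi$ acts (à la von Staudt: the ternary/quaternary constructions for addition and multiplication of cross-ratios transfer along lines precisely because those constructions are built from collinearity incidences), and extracts from this a place $\rho$ on $K$ with values in $K'$ together with an additive $\rho$-semilinear $A$ defined on the valuation submodule $F_V$. The delicate points are: (a) that $\phi$ may fail to be injective, so one obtains a \emph{place} rather than an isomorphism of sfields and $A$ has a kernel $I_V$ — handled by Machala's valuation-ring formalism recalled just above; and (b) that the ambient space $V'$ may be strictly larger than $W$, which is harmless since $A$ need only land in $V'$. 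Throughout, Theorem~\ref{thm:generalised-semilinear-map-induces-lineation} provides the consistency check that the $A$ so constructed indeed re-induces $\phi$.

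Rather than redo von Staudt's algebra, I would cite \cite{Mach,Fau} for the core reconstruction and present the argument as an orchestration: reduce to $P(W)$ via (L3), secure non-collapsing on a frame via (L2), invoke the classical representation to get $(\rho, A)$ on the frame-span, extend $A$ additively and $\rho$-semilinearly to all of $F_V$ using that every line already satisfies (L1'), and finally confirm $\phi_A = \phi$ by Theorem~\ref{thm:generalised-semilinear-map-induces-lineation}. The hard part is genuinely step (b)/(a) above — keeping careful track of where injectivity fails and ensuring the valuation ring $F_K$ and submodule $F_V$ are the correct ones — but this is exactly the content packaged in \cite[Satz~5 and the following results]{Mach}, so the proof here is a matter of assembling those pieces under hypotheses (L1)--(L3).
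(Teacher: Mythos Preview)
The paper does not prove Theorem~\ref{thm:Faure} at all: it is simply stated as ``the main theorem of \cite{Mach}'' and cited without argument. Your proposal is therefore not in conflict with anything in the paper; indeed, your bottom line --- defer the actual von Staudt-type reconstruction to \cite{Mach,Fau} --- is exactly what the paper does, only the paper does so in one sentence rather than via an orchestrated sketch.

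That said, a brief comment on the sketch itself: your reduction to $P(W)$ via (L3) and the use of (L2) to ensure lines are not collapsed to two-point sets is the right shape, but the claim that ``the image of every projective line that is not collapsed is a \emph{full} projective line of $P(W)$'' is stronger than what (L2) gives and stronger than what is needed. Condition (L2) only guarantees at least three image points on such a line, not the whole line; surjectivity onto lines is not available in general (the place $\rho$ need not be onto $K'$). The Machala/Faure argument does not use full-line surjectivity but rather uses the three-point condition to make the coordinate constructions well-defined. This is a minor overstatement in your outline, not a fatal gap, since you immediately hand the real work to \cite{Mach}.
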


We next consider linear spaces that are equipped with an inner product. For, the maps between projective spaces that we are going to study are supposed to respect first and foremost an orthogonality relation.

A {\it $\star$-sfield} is an sfield equipped with an involutorial antiautomorphism $^\star$. An {\it (anisotropic) Hermitian space} is a linear space $H$ over a $\star$-sfield $K$ that is equipped with an anisotropic, symmetric sesquilinear form $\herm{\cdot}{\cdot} \colon H \times H \to K$. For $x, y \in H$, we write $x \perp y$ if $\herm x y = 0$, and for $x, y \in H\withoutzero$, $\lin x \perp \lin y$ means $x \perp y$.

Let $H$ and $H'$ be Hermitian spaces over the $\star$-sfield $K$ and $K'$, respectively. We call $U \colon H \to H'$ a {\it generalised semiunitary} map if $U$ is a generalised semilinear map w.r.t.\ some place $\rho$ from $K$ to $K'$ and there are a $\lambda \in K$ and a $\lambda' \in K'$ such that
\[ \herm{U(x)}{U(y)} \;=\; \rho(\herm x y \lambda) \, \lambda' \]
for any $x, y \in F_H$. The question arises whether orthogonality-preserving lineations are induced by semiunitary maps. Under particular circumstances, we can give an affirmative answer.

\begin{theorem} \label{thm:generalised-semiunitary-maps}
Let $H$ and $H'$ be finite-dimensional Hermitian spaces over the $\star$-fields $K$ and $K'$, respectively. Assume that $H$ possesses an orthogonal basis consisting of vectors of equal length. Then any non-degenerate orthogonality-preserving lineation $\phi \colon P(H) \to P(H')$ is induced by a generalised semiunitary map.
\end{theorem}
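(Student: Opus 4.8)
The plan is to combine the representation theorem for lineations with a rigidity argument supplied by the inner product. Since $\phi$ is non-degenerate, Theorem~\ref{thm:Faure} provides a place $\rho\colon F_K\to K'$ and a generalised semilinear map $A\colon F_H\to H'$ with respect to $\rho$ such that $\phi=\phi_A$. I would fix an orthogonal basis $e_1,\dots,e_n$ of $H$ with $\herm{e_i}{e_i}=d$ for all $i$ (by non-degeneracy, $n\geq 3$), and for each $i$ choose $x_i\in\lin{e_i}\cap F_H$ with $f_i:=A(x_i)\neq 0$. First I would note that, $F_K$ being a valuation ring, the free $F_K$-module $\bigoplus_i F_K x_i$ already meets every line of $H$ in a vector with non-zero image under $A$; hence we may replace $F_H$ by this module and assume $F_H=\bigoplus_i F_K x_i$, so that $A(\sum_i\xi_i x_i)=\sum_i\rho(\xi_i)f_i$ for $\xi_i\in F_K$. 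Orthogonality-preservation makes $f_1,\dots,f_n$ pairwise orthogonal, hence linearly independent by anisotropy; both Gram matrices are then diagonal, with non-zero diagonal entries $d_i:=\herm{x_i}{x_i}$ and $d_i':=\herm{f_i}{f_i}$ lying in the $\star$-fixed subfields $K_0$ of $K$ and $K_0'$ of $K'$.

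The core of the argument is that $\rho$ is forced to be compatible with the inner-product data, and I would extract this by working inside the planes $\lin{e_i,e_j}=\lin{x_i,x_j}$. Since $d_i,d_j$ are $\star$-fixed, the vector $x_i+\xi x_j$ is orthogonal to $x_i-(d_i/d_j)\bar\xi^{-1}x_j$ whenever the latter is a well-defined point; pushing such orthogonal pairs through $\phi_A$ and expanding a $2\times 2$ Gram matrix in $H'$ should yield, in turn:
\begin{enumerate}[{\rm(i)}]
\item $d_i/d_j\in U(F_K)$ for all $i\neq j$ --- otherwise one of the two points would collapse under $\phi$ onto $\lin{f_i}$ while remaining orthogonal to the other, contradicting $\herm{f_i}{f_i}\neq 0$;
\item the valuation ring $F_K$ is $\star$-stable, by the same collapsing phenomenon applied to a hypothetical $\xi\in F_K$ with $\bar\xi\notin F_K$;
\item $\rho(d_i/d_j)=d_i'/d_j'$, obtained from the case $\xi=1$ together with $d_i'/d_j'\in K_0'$;
\item $\rho(\bar\xi)=\rho(\xi)^\star$ for every $\xi\in F_K$.
\end{enumerate}

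Granting (i)--(iv), the conclusion is a short computation. I would order the basis so that $d_1$ has least valuation among $d_1,\dots,d_n$; then $d_i/d_1\in U(F_K)$ and, using the $\star$-stability of $F_K$, $\herm{x}{y}\,d_1^{-1}\in F_K$ for all $x,y\in F_H$. Put $\lambda:=d_1^{-1}\in K$ and $\lambda':=d_1'\in K'$. For $x=\sum_i\xi_i x_i$ and $y=\sum_i\eta_i x_i$ with $\xi_i,\eta_i\in F_K$, additivity and multiplicativity of $\rho$ together with (iii) and (iv) give
\[
\rho\bigl(\herm{x}{y}\lambda\bigr)\,\lambda'
=\sum_i\rho(\xi_i)\,\rho(\eta_i)^\star\,\rho(d_i/d_1)\,d_1'
=\sum_i\rho(\xi_i)\,\rho(\eta_i)^\star\,d_i'
=\herm{A(x)}{A(y)}.
\]
Thus $A$ is a generalised semiunitary map with respect to $\rho$, and $\phi=\phi_A$ is induced by it.

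I expect the main obstacle to be step~(ii): the place produced by Theorem~\ref{thm:Faure} carries no information about the involutions, and without $\star$-stability of $F_K$ neither $\herm{x}{y}\lambda$ nor $\rho(\bar\xi)$ is controllable. The point I would need to pin down is that orthogonality-preservation is rigid enough to force it --- if $\bar\xi$ left $F_K$ one could exhibit two (generically distinct, and necessarily anisotropic) $\phi$-orthogonal points that $\phi_A$ nonetheless sends to the same line $\lin{f_i}$, which is impossible because $\herm{f_i}{f_i}\neq 0$; the same device yields (i). Once (i) and (ii) are secured, (iii), (iv) and the final identity are routine, and the commutativity of $K$ and $K'$ --- hence that $K_0,K_0'$ are fields and that rescaling $A$ by a scalar of $K'$ keeps it $\rho$-semilinear --- is used throughout to diagonalise the forms and to commute scalars past $\rho$.
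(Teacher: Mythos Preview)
Your approach and the paper's are the same in outline: apply Faure's theorem to obtain a generalised semilinear $A$ with respect to a place $\rho$, then exploit orthogonal pairs inside the coordinate planes to show that $F_K$, $I_K$ are $\star$-stable, that $\rho$ intertwines the involutions, and that the diagonal Gram data on the two sides match up, after which the semiunitary identity is a direct expansion. The paper organises these steps as (a)--(f) and uses precisely the analogous test vectors (e.g.\ $\alpha b_1-b_2\perp b_1+\alpha^\star b_2$ for your (ii) and (iv)).

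The one substantive difference is the normalisation. The paper keeps the given $F_H$ and shows, using the equal-length hypothesis, that the basis vectors themselves lie in $F_H\setminus I_H$ (step (b)); it then needs a separate argument (steps (e) and (f), the latter via Rad\'o's lemma) that every element of $F_H$ has all coordinates in $F_K$. You instead shrink $F_H$ to the free $F_K$-module $\bigoplus_i F_K x_i$; this is legitimate --- the restricted map is still a generalised semilinear map inducing $\phi$, because the $f_i$ are independent --- and it makes the coordinate condition automatic, at the price of having to prove your (i). Your collapsing argument for (i) is correct; note, though, that it never uses the common length $d$, so your route appears to dispense with the equal-length hypothesis that the paper's step (b) needs essentially.

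One small inaccuracy: in your sketch of (ii) the two images are $f_i+\rho(\xi)f_j$ and $f_i$, which in general are \emph{not} on the same line; the actual contradiction is $(f_i+\rho(\xi)f_j,f_i)=d_i'\neq 0$, exactly as in the paper's step (d). With that correction, all of (i)--(iv) go through as you indicate.
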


\begin{proof}
By Theorem~\ref{thm:Faure}, $\phi$ is induced by a generalised semilinear map $U \colon F_H \to H'$ w.r.t.\ a place $\rho \colon F_K \to K'$.

We proceed by showing several auxiliary lemmas.

(a) For any $a, b \in F_H \setminus I_H$, $a \perp b$ implies $U(a) \perp U(b)$.

Proof of (a): Assume $a \perp b$. Then $\lin a \perp \lin b$ and hence $\lin{U(a)} = \phi(\lin a) \perp \phi(\lin b) = \lin{U(b)}$ as $\phi$ is orthogonality-preserving. It follows $U(a) \perp U(b)$.

(b) There is an orthogonal basis $b_1, \ldots, b_n \in F_H \setminus I_H$ of $H$ consisting of vectors of equal length.

Proof of (b): By assumption, $H$ possesses an orthogonal basis $b_1, \ldots, b_n$ consisting of vectors of equal length. In view of condition (i) of the definition of a generalised semilinear map, we may assume that $b_1 \in F_H \setminus I_H$. Let $2 \leq i \leq n$; we claim that $b_i \in F_H \setminus I_H$ as well. Assume that $b_i \in I_H$. Then $U(b_1+b_i) = U(b_1-b_i) = U(b_1) \neq 0$ but $b_1 + b_i \perp b_1 - b_i$, in contradiction to (a). Assume that $b_i \notin F_H$. Let $\lambda \in K$ be such that $\lambda b_i \in F_H \setminus I_H$. Then $\lambda^{-1} \in F_K$ would imply that $b_i = \lambda^{-1} \cdot \lambda b_i \in F_H$ contrary to the assumption; hence $\lambda \in I_K$. It follows $\lambda b_1, \lambda b_i \in F_H$ and $U(\lambda b_i + \lambda b_1) = U(\lambda b_i - \lambda b_1) = U(\lambda b_i) \neq 0$ but $\lambda b_i + \lambda b_1 \perp \lambda b_i - \lambda b_1$, again a contradiction to (a).

For the rest of the proof, we fix a basis $b_1, \ldots, b_n$ of $H$ as specified in (b).

(c) $U(b_1), \ldots, U(b_n)$ are vectors of equal length.

Proof of (c): Let $2 \leq i \leq n$. From $b_1 + b_i \perp b_1 - b_i$ it follows by (a) that $U(b_1 + b_i) \perp U(b_1 - b_i)$, that is, $\herm{U(b_1) + U(b_i)}{U(b_1) - U(b_i)} = 0$. By (a), it follows $\herm{U(b_1)}{U(b_1)} = \herm{U(b_i)}{U(b_i)}$. The assertion is shown.

(d) $F_K$ and $I_K$ are closed under $^\star$. Moreover, for any $\alpha \in F_K$, we have $\rho(\alpha^\star) = \rho(\alpha)^\star$.

Proof of (d): Let $\alpha \in I_K \setminus \{0\}$. Assume that $\alpha^\star \notin F_K$. Then $(\alpha^{-1})^\star = (\alpha^\star)^{-1} \in I_K$. Moreover, $\alpha b_1 - b_2$ and $(\alpha^{-1})^\star b_1 + b_2$ are orthogonal vectors in $F_H \setminus I_H$ and hence $-U(b_2) = U(\alpha b_1 - b_2) \perp U((\alpha^{-1})^\star b_1 + b_2) = U(b_2)$, a contradiction. We conclude that $\alpha^\star \in F_K$. Furthermore, $\alpha b_1 - b_2$ and $b_1 + \alpha^\star b_2$ are orthogonal vectors in $F_H \setminus I_H$ and hence $U(b_2) = -U(\alpha b_1 - b_2) \perp U(b_1 + \alpha^\star b_2) = U(b_1) + \rho(\alpha^\star) U(b_2)$. We conclude that $\rho(\alpha^\star) = 0$, that is, $\alpha^\star \in I_K$. We have shown that $I_K$ is closed under $^\star$.

Let now $\alpha \in K \setminus \{0\}$ be such that $\alpha^\star \notin F_K$. Then $(\alpha^\star)^{-1} \in I_K$ and hence also $\alpha^{-1} \in I_K$. This means $\alpha \notin F_K$. It follows that also $F_K$ is closed under $^\star$.

Finally, let $\alpha \in F_K$. We have that $\alpha b_1 - b_2$ and $b_1 + \alpha^\star b_2$ are orthogonal vectors in $F_H \setminus I_H$. It follows that $\rho(\alpha) U(b_1) - U(b_2) \perp U(b_1) + \rho(\alpha^\star) U(b_2)$, that is,
\[ \rho(\alpha) \herm{U(b_1)}{U(b_1)} - \rho(\alpha^\star)^\star \herm{U(b_2)}{U(b_2)} = 0. \]
Thus, by (c), the assertion follows.

(e) Let $\alpha_1, \ldots, \alpha_n \in K$. Then there is an $\alpha \in K \setminus \{0\}$ such that $\alpha^{-1} \alpha_1, \ldots, \alpha^{-1} \alpha_n \in F_K$ and $\alpha^{-1} \alpha_i \notin I_K$ for at least one $i$.

The proof of (e) can be found in \cite[Lemma 6]{Rad}.

(f) Let $x = \alpha_1 b_1 + \ldots + \alpha_n b_n \in F_H$. Then $\alpha_1, \ldots, \alpha_n \in F_K$.

Proof of (f): Assume to the contrary that one of the coefficients is not in $F_K$. By (e), there is an $\alpha \in K$ such that $\alpha^{-1} \alpha_1, \ldots, \alpha^{-1} \alpha_n \in F_K$ and $\alpha^{-1} \alpha_i \notin I_K$ for some $i$. Then $\alpha \notin F_K$ and hence $\alpha^{-1} \in I_K$. Hence $0 = \rho(\alpha^{-1}) U(x) = U(\alpha^{-1} x) = \rho(\alpha^{-1}\alpha_1) U(b_1) + \ldots + \rho(\alpha^{-1}\alpha_n) U(b_n) \neq 0$, because $\rho(\alpha^{-1}\alpha_i) U(b_i) \neq 0$ and the summed vectors are mutually orthogonal. The assertion follows.

Let now $x = \alpha_1 b_1 + \ldots + \alpha_n b_n$ and $y = \beta_1 b_1 + \ldots + \beta_n b_n$ be elements of $F_H$. By (f), $\alpha_1, \ldots, \alpha_n, \beta_1, \ldots, \beta_n \in F_K$. Using (c) and (d), we get
\[ \begin{split}
& \herm{U(x)}{U(y)} \;=\; \rho(\alpha_1) \herm{U(b_1)}{U(b_1)} \rho(\beta_1)^\star + \ldots +
  \rho(\alpha_n) \herm{U(b_n)}{U(b_n)} \rho(\beta_n)^\star \\
& \;=\; \rho(\alpha_1 \beta_1^\star + \ldots + \alpha_n \beta_n^\star) \herm{U(b_1)}{U(b_1)} \\
& \;=\; \rho\left((\alpha_1 \herm{b_1}{b_1} \beta_1^\star + \ldots +
  \alpha_n \herm{b_n}{b_n} \beta_n^\star) \herm{b_1}{b_1}^{-1}\right) \herm{U(b_1)}{U(b_1)} \\
& \;=\; \rho(\herm{x}{y} \herm{b_1}{b_1}^{-1}) \herm{U(b_1)}{U(b_1)},
\end{split} \]
thus the theorem is proved.
\end{proof}

We may observe that the requirement in Theorem~\ref{thm:generalised-semiunitary-maps} regarding the existence of basis vectors of equal length is related to the orderability of the scalar $\star$-field. In the remainder of this section, we review orders on fields and $\star$-fields and we will indicate examples for our above results. For further information, we refer the reader, e.g., to~\cite{Fuc,Pre,Hol}.

We recall that an {\it ordered field} is a field $K$ equipped with a linear order such that the additive reduct becomes a linearly ordered group and the positive elements are closed under multiplication \cite[Chapter 8]{Fuc}. In this case, $K^n$, $n \geq 1$, endowed with the standard inner product,
\[ \hermbig{\scriptsize \begin{pmatrix} \alpha_1 \\ \vdots \\ \alpha_n \end{pmatrix}}%
{\scriptsize \begin{pmatrix} \beta_1 \\ \vdots \\ \beta_n \end{pmatrix}}
\;=\; \alpha_1 \beta_1 + \ldots + \alpha_n \beta_n, \]
is an $n$-dimensional Hermitian space. Moreover, $K^n$ is positive definite and possesses an orthonormal basis.

The elements of $K$ may be roughly classified by means of the linear order. We call
\[ F_K \;=\;
  \{ \alpha \in K \colon |\alpha| \leq n \text{ \rm for some } n \in \Naturals \} \]
the set of {\it finite} elements, which we partition in turn into the two sets
\begin{align*}
M_K \;=\;
& \{ \alpha \in K \colon \tfrac 1 n \leq |\alpha| \leq n \text{ \rm for some } n \in \Naturals \}, \\
I_K \;=\;
& \{ \alpha \in K \colon |\alpha| \leq \tfrac 1 n \text{ \rm for all } n \in \Naturals \},
\end{align*}
consisting of the {\it medial} and the {\it infinitesimal} elements, respectively. Then $F_K$ is a valuation ring of $K$. We have that $U(F_K) = M_K$ is the group of valuation units and, in accordance with the notation used above, the unique maximal ideal of $F_K$ is $I_K = F_K \setminus M_K$.

If $0$ is the only infinitesimal element, that is, if all non-zero elements are medial, then $K$ is called  {\it Archimedean}. This is the case if and only if $K$ is isomorphic to an ordered subfield of $\Reals$; see, e.g., \cite{Fuc}.

Any positive definite Hermitian space over a non-Archimedean ordered field $K$ that possesses an orthonormal basis gives rise to an example for Theorem~\ref{thm:generalised-semiunitary-maps}. Namely, the quotient of the module over $F_K$ of the vectors of finite length modulo the submodule of the vectors of infinitesimal length results in a Hermitian space over $\Reals$. For the details, see, e.g., \cite{Hol}; let us here consider the particular case of non-standard reals.

\begin{example} \label{ex:non-standard-reals}
Let $\Reals^\star$ be the ordered field of hyperreal numbers. The set $F_{\Reals^\star}$ of finite hyperreals is a valuation ring. Moreover, $F_{\Reals^\star} / I_{\Reals^\star}$ is isomorphic to $\Reals$, hence the quotient map induces a place $\rho \colon F_{\Reals^\star} \to \Reals$.

Consider now the linear space $(\Reals^\star)^n$, $n \geq 3$, endowed with the standard inner product. Then
\[ F_{(\Reals^\star)^n} = \{ x \in (\Reals^\star)^n \colon \herm x x \in F_{\Reals^\star} \} = \left\{ {\scriptsize \begin{pmatrix} \alpha_1 \\ \vdots \\ \alpha_n \end{pmatrix}} \colon \alpha_1, \ldots, \alpha_n \in F_{\Reals^\star} \right\} \]
is a submodule of $(\Reals^\star)^n$ over $F_{\Reals^\star}$. Furthermore,
\[ A \colon F_{(\Reals^\star)^n} \to \Reals^n \komma
{\scriptsize \begin{pmatrix} \alpha_1 \\ \vdots \\ \alpha_n \end{pmatrix}} \mapsto
{\scriptsize \begin{pmatrix} \rho(\alpha_1) \\ \vdots \\ \rho(\alpha_n) \end{pmatrix}} \]
is a generalised semiunitary map from $(\Reals^\star)^n$ to $\Reals^n$, inducing a non-degenerate orthog\-onality-preserving lineation.
\end{example}

Consider finally the case that the field is equipped with an involution. Let $K$ be a $\star$-field and let $S_K = \{ \alpha \in K \colon \alpha^\star = \alpha \}$ be its fixed field. Assume that $S_K$, as an additive group, is linearly ordered such that $1 \geq 0$ and, for any $\beta \in K$, $\alpha \geq 0$ implies $\beta \alpha \beta^\star \geq 0$. Then $K$ is called {\it Baer ordered}.

All above considerations allow analogues in this broader context. In particular, a positive definite Hermitian space over a Baer ordered $\star$-field allows the construction of a quotient space over $\Reals$ or $\Complexes$; see \cite{Hol}. Example~\ref{ex:non-standard-reals} can be adapted in a way that the reals are replaced with the field of complex numbers.

\section{Linear orthogonality spaces}
\label{sec:Linear-orthogonality-spaces}

The orthogonality spaces to which we turn in this section are directly related to those that arise from Hermitian spaces. We will show that they are Dacey spaces and hence belong to the normal orthogonality spaces.

\begin{definition}
An orthogonality space $(X, \perp)$ is called {\it linear} if, for any two distinct elements $e, f \in X$, there is a third element $g$ such that $\{e,f\}\c = \{e,g\}\c$ and exactly one of $f$ and $g$ is orthogonal to $e$.
\end{definition}

In other words, for $(X, \perp)$ to be linear means that (i) for distinct, non-orthogonal elements $e, f \in X$ there is a $g \perp e$ such that $\{ e, f \}\c = \{ e, g \}\c$ and (ii) for orthogonal elements $e, f \in X$, there is a $g \notperp e$ such that $\{ e, f \}\c = \{ e, g \}\c$. Note that in both cases $g$ is necessarily distinct from $e$ and $f$.

\begin{example} \label{ex:standard-example-3}
Let $H$ be a Hilbert space and let $(P(H), \perp)$ again be the orthogonality space arising from $H$ according to Example~\ref{ex:standard-example-1}. Then we readily check that $(P(H), \perp)$ is linear.
\end{example}

We start with the following observation. We call an orthogonality space $(X, \perp)$ {\it irredundant} if, for any $e, f \in X$, $\{e\}\c = \{f\}\c$ implies $e = f$. Moreover, we call $(X,\perp)$ {\it strongly irredundant} if, for any $e, f \in X$, $\{e\}\c \subseteq \{f\}\c$ implies $e = f$. Obviously, strong irredundancy implies irredundancy. We may express strong irredundancy also closure-theoretically; cf., e.g., \cite{Ern}. Indeed, $(X,\perp)$ is strongly irredundant exactly if the specialisation order associated with the closure operator $\cc$ is the equality.

\begin{lemma} \label{lem:linear-implies-irredundant}
Linear orthogonality spaces are strongly irredundant.
\end{lemma}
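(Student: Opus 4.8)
The goal is to show that if $(X,\perp)$ is linear then $\{e\}\c \subseteq \{f\}\c$ forces $e = f$. Suppose for contradiction that $e \neq f$ and $\{e\}\c \subseteq \{f\}\c$. The plan is to split into the two cases of the linearity condition applied to the pair $e, f$, and derive a contradiction in each.

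First I would observe a basic consequence of the hypothesis: since $\{e\}\c \subseteq \{f\}\c$, any element orthogonal to $e$ is orthogonal to $f$. In particular, because $\perp$ is symmetric, whether or not $e \perp f$ holds is constrained. If $e \perp f$, then $f \in \{e\}\c \subseteq \{f\}\c$, so $f \perp f$, contradicting irreflexivity. Hence we must have $e \notperp f$. Now apply linearity to the distinct pair $e, f$: there is $g$ with $\{e,f\}\c = \{e,g\}\c$ and exactly one of $f, g$ orthogonal to $e$. Since $e \notperp f$, this forces $g \perp e$. But then $g \in \{e\}\c \subseteq \{f\}\c$, so $g \perp f$ as well. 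Therefore $g \in \{e,f\}\c$. On the other hand $g \in \{e,g\}\c$ would require $g \perp g$, which is false; so $g \notin \{e,g\}\c = \{e,f\}\c$, contradicting $g \in \{e,f\}\c$.

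The argument is short and the only real subtlety is checking which of the two alternatives in the definition of linearity actually occurs — that is handled by the observation that $e \notperp f$ (forced by strong irredundancy's hypothesis together with irreflexivity), which rules out the case where $g \notperp e$. Once the correct branch is identified, the contradiction comes from the incompatibility between $g$ being orthogonal to both $e$ and $f$ (hence in $\{e,f\}\c$) and $g$ lying in $\{e,g\}\c$, which is impossible by irreflexivity. I do not expect any genuine obstacle here; the main care needed is simply to keep track of the equality $\{e,f\}\c = \{e,g\}\c$ and to use irreflexivity at the right moment.
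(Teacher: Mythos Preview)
Your proof is correct and follows essentially the same strategy as the paper's: rule out $e \perp f$ by irreflexivity, then apply linearity to obtain $g \perp e$ with $\{e,f\}\c = \{e,g\}\c$ and derive a contradiction from $g \perp g$. The only cosmetic difference is that the paper first proves irredundancy separately and phrases the final contradiction via double complements (observing $g \in \{e,g\}\cc = \{e,f\}\cc = \{e\}\cc$ while $g \perp e$), whereas you work directly with single complements; your route is slightly more streamlined but the content is the same.
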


\begin{proof}
Let $(X, \perp)$ be a linear orthogonality space.

We first show that $X$ is irredundant. Let $e$ and $f$ be distinct elements of $X$. If $e$ and $f$ are orthogonal, then $f \perp e$ but $e \notperp e$. If not, there is by the linearity some $g \perp e$ such that $\{ e, f \}\c = \{ e, g \}\c$. Then $g \notin \{ e, g \}\c = \{ e, f \}\c$, hence $g \perp e$ but $g \notperp f$. Hence $\{e\}\c \neq \{f\}\c$ in either case.

Let now $e, f \in X$ be such that $\{e\}\c \subseteq \{f\}\c$. We shall show that then actually $\{e\}\c = \{f\}\c$; by irredundancy, it will follow that $X$ is strongly irredundant. Assume to the contrary that $\{e\}\c \subsetneq \{f\}\c$. Then $e \neq f$ and $e \notperp f$. Hence, by the linearity of $X$, there is a $g \perp e$ such that $\{e,g\}\c = \{e,f\}\c$. But this means $g \in \{e,g\}\cc = \{e,f\}\cc = \{e\}\cc$, a contradiction.
\end{proof}

The following correspondence between linear orthogonality spaces and linear spaces was shown in \cite{Vet3}.

\begin{theorem} \label{thm:orthogonality-spaces-by-orthomodular-spaces}
Let $H$ be a Hermitian space of finite dimension $n$. Then $(P(H), \perp)$ is a linear orthogonality space of rank $n$.

Conversely, let $(X, \perp)$ be a linear orthogonality space of finite rank $n \geq 4$. Then there is a $\star$-sfield $K$ and an $n$-dimensional Hermitian space $H$ over $K$ such that $(X,\perp)$ is isomorphic to $(P(H), \perp)$.
\end{theorem}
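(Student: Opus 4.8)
For the first assertion, pairwise orthogonal non-zero vectors of $H$ are linearly independent because the form is anisotropic; hence any orthogonal subset of $P(H)$ has at most $n$ elements, while an orthogonal basis of $H$ -- which exists, as one may repeatedly split off a one-dimensional subspace using anisotropy -- yields an orthogonal subset of size $n$. So $(P(H),\perp)$ has rank $n$. For linearity, let $\lin{x} \neq \lin{y}$ in $P(H)$. If $\lin{x} \notperp \lin{y}$, set $g = y - x\lambda$, where $\lambda \in K$ is chosen so that $g \perp x$ -- possible since $\herm{x}{x} \neq 0$; then $g \neq 0$, $\lin{g} \perp \lin{x}$ and $\lin{x} \vee \lin{g} = \lin{x} \vee \lin{y}$, so $\{\lin{x},\lin{y}\}\c = \{\lin{x},\lin{g}\}\c$. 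If $\lin{x} \perp \lin{y}$, put $g = x+y$; then $\herm{g}{x} = \herm{x}{x} \neq 0$, so $\lin{g} \notperp \lin{x}$, while $\lin{x} \vee \lin{g} = \lin{x} \vee \lin{y}$ again gives $\{\lin{x},\lin{y}\}\c = \{\lin{x},\lin{g}\}\c$. In either case exactly one of $\lin{y},\lin{g}$ is orthogonal to $\lin{x}$, as required.

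For the converse, the plan is to reconstruct $H$ from the ortholattice $L = {\mathcal C}(X,\perp)$ and then to identify $X$ with $P(H)$. The first and main step is to recognise $L$ as the subspace lattice of an $n$-dimensional linear space over an sfield. Using linearity, one shows that $(X,\perp)$ is a Dacey space, so that $L$ is orthomodular; that the atoms of $L$ are exactly the sets $\lin{e}\cc$ with $e \in X$, and that every element of $L$ is a join of atoms; that $L$ has length $n$ and satisfies the covering property; and that $L$ is irreducible -- it is here that linearity rules out ``direct sum'' configurations. Thus $L$ is an irreducible, atomistic, complemented modular lattice of length $n$, that is, the subspace lattice of a projective geometry of projective dimension $n-1$. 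Since $n-1 \geq 3$, this geometry is Desarguesian, and the coordinatisation theorem for projective geometries furnishes an sfield $K$ and an $n$-dimensional linear space $V$ over $K$ with $L \cong \mathrm{Sub}(V)$.

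Next, the orthocomplementation $\c$ of $L$ transports to an orthocomplementation of $\mathrm{Sub}(V)$. By the classical theory (Birkhoff--von Neumann, Baer) of orthocomplementations of subspace lattices of finite-dimensional linear spaces, such an orthocomplementation is induced by a non-degenerate reflexive sesquilinear form relative to a uniquely determined involutory antiautomorphism $^\star$ of $K$; after a suitable rescaling the form becomes symmetric, and the irreflexivity of $\perp$ -- no atom of $L$ is orthogonal to itself -- forces it to be anisotropic. Writing $H$ for $V$ equipped with this form, we obtain an $n$-dimensional Hermitian space over the $\star$-sfield $K$ with ${\mathcal C}(X,\perp) \cong {\mathcal C}(P(H),\perp)$ as ortholattices. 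Finally, by Lemma~\ref{lem:linear-implies-irredundant} the assignment $e \mapsto \lin{e}\cc$ is a bijection of $X$ onto the set of atoms of $L$, hence onto $P(H)$; and since for $e,f \in X$ one has $e \perp f$ exactly when $\lin{e}\cc \perp \lin{f}\cc$ holds in $L$, this bijection is an isomorphism $(X,\perp) \cong (P(H),\perp)$.

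The hard part is the lattice-theoretic step: extracting from the purely relational linearity condition the precise package -- modularity together with the covering property, irreducibility, and the right length -- that makes the coordinatisation theorems applicable. It is also here that the hypothesis $n \geq 4$ is indispensable, since a projective geometry of projective dimension at least $3$ is automatically Desarguesian, hence representable over an sfield, whereas for rank $3$ one is left with projective planes that may fail to admit such a representation, and rank $\leq 2$ is degenerate.
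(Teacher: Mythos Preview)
The paper does not actually prove this theorem; it merely cites \cite{Vet3}. Your outline is correct and follows the expected route---verify linearity of $(P(H),\perp)$ directly, and for the converse establish that ${\mathcal C}(X,\perp)$ is an irreducible atomistic orthomodular lattice of finite length with the covering property, apply projective coordinatisation (possible since $n\geq 4$), and then represent the orthocomplementation by an anisotropic Hermitian form via the Birkhoff--von Neumann/Baer theorem. This is precisely the strategy of \cite{Vet3}, and in fact the lattice-theoretic ingredients you list (atomisticity via strong irredundancy, modularity, covering, irreducibility) are the same ones the present paper invokes later in the proof of Theorem~\ref{thm:linear-vs-Dacey}.

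One small notational slip: you write $\lin{e}\cc$ for $e\in X$, but in this paper the bracket notation $\lin{\cdot}$ is reserved for the linear span of vectors; for elements of an abstract orthogonality space you should write $\{e\}\cc$.
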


Clearly, the assumption regarding the rank cannot be omitted in Theorem~\ref{thm:orthogonality-spaces-by-orthomodular-spaces}. For low ranks, linear orthogonality spaces may be of a much different type than those arising from inner-product spaces.

\begin{example} \label{ex:linear-example-mon}
For $n \geq 2$, let $D_n = \{ 0_1, 1_1, \ldots, 0_n, 1_n \}$, endowed with the orthogonality relation such that $0_i$ and $1_i$, for each $i = 1, \ldots, n$, are orthogonal and no further pair. We easily see that $(D_n, \perp)$ is linear. Note that ${\mathcal C}(D_n, \perp)$ is isomorphic to $\text{\rm MO}_n$, the horizontal sum of $n$ four-element Boolean algebras, which is a modular ortholattice with $2n+2$ elements.
\end{example}

Each linear orthogonality space is a Dacey space and hence normal. The exact relationship is as follows.

Here, an orthogonality space $(X, \perp)$ is called {\it irreducible} if $X$ cannot be partitioned into two non-empty subsets $A$ and $B$ such that $e \perp f$ for any $e \in A$ and $f \in B$.

\begin{theorem} \label{thm:linear-vs-Dacey}
An orthogonality space $(X, \perp)$ is linear if and only if $X$ is an irreducible, strongly irredundant Dacey space. In particular, $X$ is in this case normal.
\end{theorem}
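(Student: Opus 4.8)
The plan is to prove the two directions separately, using Lemma~\ref{lem:linear-implies-irredundant} to dispose of strong irredundancy in the forward direction, and using the structural characterisations of Dacey spaces (Lemma~\ref{lem:Dacey}) together with a direct analysis of the defining linearity condition. For the forward direction, assume $(X,\perp)$ is linear. Strong irredundancy is immediate from Lemma~\ref{lem:linear-implies-irredundant}. For irreducibility, suppose $X = A \sqcup B$ with $A, B$ non-empty and $A \perp B$; pick $e \in A$ and any $f \in B$. Since $e \perp f$, linearity gives a $g \notperp e$ with $\{e,f\}\c = \{e,g\}\c$. Now $g$ cannot lie in $A$ (else $g \perp e$) nor in $B$ (else $g \perp e$ again, since $e \in A$ and $A \perp B$ — wait, if $g \in B$ then $e \perp g$ as well), so $g \notin A \cup B = X$, a contradiction. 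For the Dacey property, I would use Lemma~\ref{lem:Dacey}: given $C \in {\mathcal C}(X,\perp)$ and a maximal orthogonal subset $D$ of $C$, I must show $D\cc = C$. Since $D \subseteq C$ and $C$ is orthoclosed we have $D\cc \subseteq C$; the reverse inclusion is the content. Suppose $x \in C \setminus D\cc$. The key is to find, using linearity repeatedly, an element orthogonal to all of $D$ that "witnesses" $x$ failing to be captured, contradicting maximality of $D$ in $C$.

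For the reverse direction, assume $(X,\perp)$ is an irreducible, strongly irredundant Dacey space, and let $e \neq f$ in $X$; I must produce the third element $g$. Split into the two cases of the remark following the definition of linearity. If $e \notperp f$: I want $g \perp e$ with $\{e,g\}\c = \{e,f\}\c$. Equivalently, writing $N = \{e,f\}\cc$ (a rank-$2$ orthoclosed subspace, since $e \not\perp f$ forces $\{e,f\}$ to have a maximal orthogonal extension of size $2$ inside $N$ — here the Dacey property via Lemma~\ref{lem:Dacey} controls the structure of $N$), I seek an element $g \in N$ with $g \perp e$; then automatically $\{e,g\}\c \supseteq \{e,f\}\c$ by the join/complement calculus inside $N$, and strong irredundancy forces equality. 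Such a $g$ exists because, inside the orthoclosed subspace $N$ of rank $2$, the element $e$ has $\{e\}\c \cap N$ non-empty: pick any maximal orthogonal subset of $N$ containing $e$; by the Dacey property it has size $2$, so it contains a partner $g \perp e$ in $N$, and $g \neq f$ because $f \notperp e$ while $g \perp e$. If $e \perp f$: symmetrically, I want $g \notperp e$ with $\{e,g\}\c = \{e,f\}\c$. Here I would use irreducibility and strong irredundancy crucially. Since $\{e,f\}\c$ is orthoclosed and (by strong irredundancy, $\{e\}\cc = \{e\}$ properly contains nothing) one checks $\{e,f\}\c \subsetneq \{e\}\c$; irreducibility should guarantee that $\{e\}\c$ is not "exhausted orthogonally" by $\{f\}$, producing an element $h \in \{e\}\c$ with $h \notperp f$, and then a suitable combination inside the rank-$2$ subspace $\{e,f\}\cc$ yields a $g \notperp e$ with the same complement as $\{e,f\}$ — again closing up via strong irredundancy.

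The final sentence of the theorem (that $X$ is then normal) follows immediately from Theorem~\ref{thm:linear-vs-Dacey}'s own statement combined with Proposition~\ref{prop:if-Dacey-then-normal}: a Dacey space is normal, and we have just characterised linear spaces as certain Dacey spaces.

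\textbf{Expected main obstacle.} The hard part will be the two existence arguments in the reverse direction, specifically extracting the witness $g$ of the correct orthogonality type from the abstract hypotheses. In the orthogonal case $e \perp f$, producing $g \notperp e$ genuinely requires irreducibility (a reducible example like a disjoint union of rank-$1$ pieces glued orthogonally would fail), and one must carefully verify that the element supplied by irreducibility, when combined with $e$ and $f$ inside the small orthoclosed subspace $\{e,f\}\cc$, really has complement equal to $\{e,f\}\c$ rather than something strictly larger — this is where strong irredundancy is the indispensable tool, since it collapses "$\subseteq$ of complements" to equality. I expect the bookkeeping to reduce, in each case, to a computation inside a rank-$2$ orthoclosed subspace, where the Dacey property (via Lemma~\ref{lem:Dacey}) makes the lattice ${\mathcal C}$ restricted to that subspace an $\text{MO}$-type ortholattice, so that the required $g$ can be pinned down explicitly; assembling this cleanly, rather than any single deep step, is the real work.
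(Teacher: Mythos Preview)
Your forward-direction irreducibility argument contains a genuine error: from $g \notperp e$ you correctly conclude $g \notin B$, but the claim ``$g \in A \Rightarrow g \perp e$'' is simply false, since elements of $A$ need not be mutually orthogonal. What actually happens is $g \in A$, whence $g \perp f$; deriving a contradiction from this requires the orthomodular machinery. The paper argues as follows: since $g \in \{e,f\}\cc$ and $g \perp f$, orthomodularity gives $\{g\}\cc \subseteq \{e,f\}\cc \cap \{f\}\c = \{e\}\cc$, hence $\{e,f\}\cc = \{e,g\}\cc = \{e\}\cc$, forcing $f \in \{e\}\cc$, a contradiction. Note that this already \emph{uses} the Dacey property, which the paper does not prove here but imports from \cite{Vet3}; your sketch of that step (``using linearity repeatedly'') is too vague to assess.

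For the reverse direction your route diverges from the paper's and has real gaps. The paper works lattice-theoretically: strong irredundancy makes each $\{e\}\cc$ an atom, so ${\mathcal C}(X,\perp)$ is an atomistic orthomodular lattice of finite length, hence modular with the covering and exchange properties; irreducibility of $X$ translates into triviality of the centre of ${\mathcal C}(X,\perp)$. The non-orthogonal case then falls out from covering plus orthomodularity (the height-$2$ element $\{e,f\}\cc$ decomposes as $\{e\}\cc \vee \{g\}\cc$ with $g \perp e$), and the orthogonal case from lattice irreducibility (the join of two orthogonal atoms contains a third atom) together with the exchange property. Your direct approach presupposes that $\{e,f\}\cc$ has rank exactly $2$, which is the covering property in disguise and does not obviously follow from Dacey plus strong irredundancy without the modularity step; and in the $e \perp f$ case, your proposed witness $h \in \{e\}\c$ with $h \notperp f$ need not lie in $\{e,f\}\cc$, so it is unclear how to manufacture from it a $g \in \{e,f\}\cc$ with $g \notperp e$ and $\{e,g\}\c = \{e,f\}\c$.
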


\begin{proof}
Let $(X, \perp)$ be linear. By \cite[Theorem 3.7]{Vet3}, ${\mathcal C}(X, \perp)$ is orthomodular, that is, a Dacey space. By Proposition~\ref{prop:if-Dacey-then-normal}, $X$ is hence normal. By Lemma~\ref{lem:linear-implies-irredundant}, $X$ is strongly irredundant. Assume now that $X = A \cup B$, where $A$ and $B$ are disjoint non-empty subsets of $X$ and $e \perp f$ for any $e \in A$ and $f \in B$. By linearity, for any $e \in A$ and $f \in B$, there is a $g \notperp e$ such that $\{ e, f \}\c = \{ e, g \}\c$. Then $g \notin B$ and consequently $g \in A$ and thus $g \perp f$. It follows $\{ g \}\cc \subseteq \{ e, f \}\cc \cap \{f\}\c = \{e\}\cc$ by orthomodularity and hence $f \in \{ f \}\cc \subseteq \{ e, f \}\cc = \{ e \}\cc \vee \{ g \}\cc = \{ e \}\cc$, in contradiction to $f \in \{e\}\c$. We conclude that $X$ is irreducible.

Conversely, let $(X, \perp)$ be an irreducible, strongly irredundant Dacey space. By the strong irredundancy, $\{e\}\cc$ is, for any $e \in X$, an atom of ${\mathcal C}(X, \perp)$ and it follows that $(X, \perp)$ is atomistic. Furthermore, ${\mathcal C}(X, \perp)$ is a complete orthomodular lattice of finite length. It follows that ${\mathcal C}(X, \perp)$ is in fact a modular lattice and hence fulfils the covering property and the exchange property. Moreover, ${\mathcal C}(X, \perp)$ is irreducible. Indeed, if the centre of ${\mathcal C}(X, \perp)$ contained an element $\emptyset \subsetneq A \subsetneq X$, then each atom would be below $A$ or below $A\c$, that is, we would have $X = A \cup A\c$ and $X$ would not be irreducible.

Let $e, f \in X$ be distinct, non-orthogonal elements. Then $\{e\}\cc$ and $\{f\}\cc$ are distinct atoms and hence $\{ e, f \}\cc = \{e\}\cc \vee \{f\}\cc$ covers $\{e\}\cc$. By orthomodularity, there is an element $g \perp e$ such that $\{ e, f \}\cc = \{e\}\cc \vee \{g\}\cc = \{ e, g \}\cc$, that is, $\{ e, f \}\c = \{ e, g \}\c$.

Let $e, f \in X$ be distinct, orthogonal elements. Since ${\mathcal C}(X, \perp)$ is irreducible, the join of $\{ e\}\cc$ and $\{f\}\cc$ contains a third atom, that is, there is a $g \neq e,f$ such that $g \in \{e,f\}\cc$. By the exchange property, it follows $\{ e, f \}\cc = \{ e, g \}\cc$. Thus $\{ e, f \}\cc = \{ e, g \}\cc$, and $g \notperp e$ because otherwise $g = f$. The proof of the linearity of $X$ is complete.
\end{proof}

\begin{example}
We observe from Theorem~\ref{thm:linear-vs-Dacey} that not every Dacey space is linear. The probably simplest counterexample is $(\Two, \neq)$, the orthogonality space consisting of two orthogonal elements, cf.~Example~\ref{ex:n-element-set}. Obviously, $\Two$ is Dacey but not linear. More generally, the same applies, for any $n \geq 2$, to $({\mathbf n}, \neq)$.
\end{example}

We have seen above that subspaces of normal orthogonality spaces are not necessarily normal. In the present context, the situation is different.

\begin{proposition} \label{prop:subspace-linear-OS-is-linear}
Any subspace of a linear orthogonality space is linear.
\end{proposition}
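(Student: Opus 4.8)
The plan is to verify the defining condition of linearity for the subspace directly, borrowing the required witness from the ambient space and using orthoclosedness to confine it to the subspace. So let $(X,\perp)$ be a linear orthogonality space and let $A\in{\mathcal C}(X,\perp)$ be a subspace, so that $A=A\cc$. Fix distinct elements $e,f\in A$. Since $X$ is linear, there is a $g\in X$ with $\{e,f\}\c=\{e,g\}\c$ and such that exactly one of $f$ and $g$ is orthogonal to $e$; as noted after the definition of linearity, such a $g$ is automatically distinct from $e$ and $f$.

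The first step is to show that $g\in A$. Applying $\c$ to the identity $\{e,f\}\c=\{e,g\}\c$ gives $\{e,f\}\cc=\{e,g\}\cc$. From $\{e,f\}\subseteq A$ and the fact that $\c$ is order-reversing we obtain $\{e,f\}\cc\subseteq A\cc=A$; since $g\in\{e,g\}\subseteq\{e,g\}\cc=\{e,f\}\cc$, it follows that $g\in A$.

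The remaining step is routine bookkeeping about the subspace structure. Writing $\c_A$ for the complement operation of $(A,\perp)$, one has $B\c_A=B\c\cap A$ for every $B\subseteq A$, so $\{e,f\}\c=\{e,g\}\c$ immediately yields $\{e,f\}\c_A=\{e,g\}\c_A$. Moreover the condition that exactly one of $f$ and $g$ is orthogonal to $e$ concerns only the relation $\perp$ among the three elements $e,f,g$, all of which lie in $A$, and $\perp$ on $A$ is merely the restriction of $\perp$ on $X$; hence this condition holds in $(A,\perp)$ as well. Thus $g$ witnesses linearity for the pair $e,f$ inside $A$, and $(A,\perp)$ is linear.

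There is essentially no obstacle here; the only point requiring attention is that the witness $g$ produced by the linearity of $X$ genuinely lies inside the orthoclosed set $A$, which is exactly what the chain $g\in\{e,g\}\cc=\{e,f\}\cc\subseteq A$ delivers. One could alternatively argue via Theorem~\ref{thm:linear-vs-Dacey}, by checking that a subspace of a linear orthogonality space is again an irreducible, strongly irredundant Dacey space; but verifying irreducibility of $A$ appears to be no simpler than the direct computation above, so the direct route is to be preferred.
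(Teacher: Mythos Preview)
Your proof is correct and follows the same overall strategy as the paper's: pick the witness $g$ for linearity in $X$, show that $g$ lands in $A$, and then transfer the identity $\{e,f\}\c=\{e,g\}\c$ to the subspace by intersecting with $A$.

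The one noteworthy difference is how you establish $g\in A$. The paper first invokes Theorem~\ref{thm:linear-vs-Dacey} to get that ${\mathcal C}(X,\perp)$ is orthomodular, and from this derives the identity $B\cce{A}=B\cc$ for $B\subseteq A$; the membership $g\in A$ is then read off from $g\in\{e,g\}\cc=\{e,g\}\cce{A}\subseteq A$. Your argument is more elementary: you bypass orthomodularity entirely and use only that $\cc$ is a closure operator with $A=A\cc$, obtaining $g\in\{e,g\}\cc=\{e,f\}\cc\subseteq A\cc=A$ directly. This is a genuine simplification --- Theorem~\ref{thm:linear-vs-Dacey} is not needed for the present proposition --- though the paper's identity $B\cce{A}=B\cc$ is of independent interest as it shows that ${\mathcal C}(A,\perpe{A})$ embeds into ${\mathcal C}(X,\perp)$.
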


\begin{proof}
Let $(X, \perp)$ be a linear orthogonality space and let $(A, \perpe{A})$ be a subspace of $X$. For $B \subseteq A$, we have $B\ce{A} = B\c \cap A$. By Theorem~\ref{thm:linear-vs-Dacey}, ${\mathcal C}(X, \perp)$ is orthomodular and hence $B\cce{A} = (B\c \cap A)\c \cap A = B\cc$.

Let $e$ and $f$ be distinct elements of $A$. Then there is a $g \in X$ such that $\{e,f\}\c = \{e,g\}\c$ and exactly one $f$ and $g$ is orthogonal to $e$. Since $g \in \{e,g\}\cc = \{e,g\}\cce{A}$, we have that $g \in A$. Furthermore, we have $\{e,f\}\ce{A} = \{e,f\}\c \cap A = \{e,g\}\c \cap A = \{e,g\}\ce{A}$. The linearity of  $(A, \perpe{A})$ is shown.
\end{proof}

\section{The category \LOS\ of linear orthogonality spaces}
\label{sec:LOS}

We denote by \LOS\ the full subcategory of \NOS\ consisting of linear orthogonality spaces.

Our aim is to describe the morphisms in \LOS. We restrict our considerations to orthogonality spaces that arise from Hermitian spaces. In view of Theorem~\ref{thm:orthogonality-spaces-by-orthomodular-spaces}, the results hence apply to linear orthogonality spaces whose rank is at least~$4$.

\begin{theorem} \label{thm:morphisms-in-LOS}
Let $H$ and $H'$ be finite-dimensional Hermitian spaces. Then a map $\phi \colon P(H) \to P(H')$ is a morphism in \LOS\ if and only if $\phi$ is an orthogonality-preserving lineation.
\end{theorem}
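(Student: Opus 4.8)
The plan is to prove the two implications separately. For the easier direction, suppose $\phi \colon P(H) \to P(H')$ is a morphism in \LOS, i.e.\ a normal homomorphism between the (linear, hence normal) orthogonality spaces $P(H)$ and $P(H')$. Orthogonality-preservation is part of the definition of a homomorphism, so only the lineation property (L1) needs checking. Given $x, y, z \in H\withoutzero$ with $\ell(\lin x, \lin y, \lin z)$, I want $\phi(\lin z) \subseteq \phi(\lin x) + \phi(\lin y)$ whenever $\phi(\lin x) \neq \phi(\lin y)$. The idea is to realise collinearity in $P(H)$ in purely orthogonality-theoretic terms so that Lemma~\ref{lem:normal-implies-line-preserving} applies. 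Concretely: if $\lin e \perp \lin f$ then $\lin z \subseteq \lin e + \lin f$ is equivalent to $z \in \{e,f\}\cc$, and Lemma~\ref{lem:normal-implies-line-preserving} gives $\phi(\lin z) \in \{\phi(e), \phi(f)\}\cc$, i.e.\ $\phi(\lin z) \subseteq \phi(\lin e) + \phi(\lin f)$, which is (L1') for an \emph{orthogonal} pair. For a general collinear triple on a line $\ell$, I replace the spanning pair $\lin x, \lin y$ by an orthogonal pair spanning the same line: since $H$ is anisotropic, any $2$-dimensional subspace has an orthogonal basis $e, f$, and $\lin x, \lin y, \lin z$ all lie in $\lin e + \lin f = \{e,f\}\cc$. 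Applying the orthogonal case to each of $\lin x, \lin y, \lin z$ shows $\phi$ maps all of them into $\{\phi(e),\phi(f)\}\cc$, a subspace of $H'$ that is at most $2$-dimensional; hence $\phi(\lin x), \phi(\lin y), \phi(\lin z)$ are collinear (or coincide), which is exactly (L1).

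For the converse, suppose $\phi$ is an orthogonality-preserving lineation; I must show it is a normal homomorphism, for which I will verify criterion (3) of Lemma~\ref{lem:normal}: for every orthogonal subset $\{e_1,\dots,e_k\}$ of $P(H)$, $\phi(\{e_1,\dots,e_k\}\cc) \subseteq \{\phi(e_1),\dots,\phi(e_k)\}\cc$. Unwinding the orthogonality-space complement in $P(H)$: if the $e_i = \lin{v_i}$ with the $v_i$ mutually orthogonal, then $\{e_1,\dots,e_k\}\cc = P(W)$ where $W = \lin{v_1,\dots,v_k}$ (using anisotropy so that $W^{\perp\perp} = W$ in $H$). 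So I need: $\phi$ maps the points of the subspace $W$ into $\{\phi(e_1),\dots,\phi(e_k)\}\cc$, equivalently into the orthogonal complement in $H'$ of $\phi(\lin{v_1}) + \dots + \phi(\lin{v_k})$. The lineation property lets me move around inside $P(W)$: any point of $W$ is collinear with (pairs of) the $e_i$, so $\phi$ of it lands in the subspace $\phi(e_1) + \dots + \phi(e_k)$ of $H'$ — but that by itself is the wrong target. The right approach is to use orthogonality-preservation together with lineation \emph{locally}: for a point $\lin w \subseteq W$, write $w = \sum \alpha_i v_i$ and produce, for each $j$, a vector $w'_j$ orthogonal to $v_j$ with $\lin w$ collinear with $\lin{v_j}, \lin{w'_j}$ (e.g.\ $w'_j = w - \alpha_j v_j$, the projection killing the $j$-th coordinate, which indeed satisfies $w'_j \perp v_j$ and $w \in \lin{v_j} + \lin{w'_j}$). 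Since $w'_j \perp v_j$, orthogonality-preservation gives $\phi(\lin{w'_j}) \perp \phi(\lin{v_j})$; and the lineation property gives $\phi(\lin w) \subseteq \phi(\lin{v_j}) + \phi(\lin{w'_j})$. I want to conclude $\phi(\lin w) \perp \phi(\lin{v_j})$. This needs a small extra argument: $\phi(\lin w)$ lies in a plane spanned by $\phi(\lin{v_j})$ and something orthogonal to it, but that alone does not force $\phi(\lin w) \perp \phi(\lin{v_j})$. I therefore iterate: choose a second decomposition, or rather observe that $w'_j$ itself lies in the span of the remaining $v_i$ and repeat, peeling off coordinates one at a time; alternatively, and more cleanly, I pick for the pair $\{e_1,\dots,e_k\}$ a companion orthogonal vector. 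The cleanest route is: let $u = w - \sum_i \frac{\herm{w}{v_i}}{\herm{v_i}{v_i}} v_i$ — wait, $w \in W$ so this is $0$; instead argue that for $w \in W$, $\lin w$ is collinear with $\lin{v_j}$ and $\lin{(w - \alpha_j v_j)}$, and $(w-\alpha_j v_j) \in \lin{v_1,\dots,\widehat{v_j},\dots,v_k}$, so by downward induction on $k$ the point $\phi(\lin{w-\alpha_j v_j})$ already lies in $\{\phi(e_i) : i \neq j\}\cc$; combined with $\phi(\lin w) \subseteq \phi(e_j) + \phi(\lin{w-\alpha_j v_j})$ and the fact (to be extracted from orthogonality-preservation applied to $v_j \perp (w - \alpha_j v_j)$) that this is an \emph{orthogonal} sum in $H'$, anisotropy of $H'$ gives that the orthocomplement of $\phi(e_j)$ inside this plane is exactly $\phi(\lin{w - \alpha_j v_j})$ together with its scalar multiples — hence $\phi(\lin w)$, lying in the plane, is either on the line $\phi(e_j)$ or is orthogonal to it; in the former case $\phi(\lin w) = \phi(e_j) \perp$ all other $\phi(e_i)$ trivially and we are separately done. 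Ranging $j$ over $1,\dots,k$ yields $\phi(\lin w) \perp \phi(e_j)$ for all $j$ (outside the trivial cases), i.e.\ $\phi(\lin w) \in \{\phi(e_1),\dots,\phi(e_k)\}\cc$, which is criterion (3).

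The main obstacle I anticipate is precisely the step just sketched: passing from ``$\phi(\lin w)$ lies in a $2$-plane spanned by $\phi(e_j)$ and a vector orthogonal to $\phi(e_j)$'' to ``$\phi(\lin w) \perp \phi(e_j)$ or $\phi(\lin w) = \phi(e_j)$.'' This requires knowing that an anisotropic plane in $H'$ has \emph{exactly} one point orthogonal to a given point — which is true by anisotropy (the orthogonal complement of a line in a nondegenerate $2$-space is a line) — but one must be careful because $\phi$ need not be injective and the relevant vector in $H'$ arises only up to the lineation, so the bookkeeping of which image points are distinct has to be handled. The inductive structure on $k$ (stripping one orthogonal coordinate at a time, base case $k=1$ being vacuous and $k=2$ being exactly the orthogonal instance of (L1') combined with orthogonality-preservation) is what organizes this, and once (3) of Lemma~\ref{lem:normal} is established the theorem follows immediately, since a map satisfying (3) is by definition a normal homomorphism, hence a morphism in \NOS\ between objects of \LOS, hence a morphism in \LOS.
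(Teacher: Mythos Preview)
Your forward direction is correct and matches the paper: replace the spanning pair of the line by an orthogonal pair and invoke Lemma~\ref{lem:normal-implies-line-preserving}.

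The converse, however, goes astray at a decisive point. You correctly note that criterion~(3) of Lemma~\ref{lem:normal} asks for $\phi(\{e_1,\dots,e_k\}\cc) \subseteq \{\phi(e_1),\dots,\phi(e_k)\}\cc$, and that $\{e_1,\dots,e_k\}\cc = P(W)$ with $W = \lin{v_1,\dots,v_k}$. But you then misidentify $\{\phi(e_1),\dots,\phi(e_k)\}\cc$ as ``the orthogonal complement in $H'$ of $\phi(\lin{v_1}) + \dots + \phi(\lin{v_k})$''. It is the \emph{double} complement, which in an anisotropic Hermitian space is simply the span $\phi(\lin{v_1}) + \dots + \phi(\lin{v_k})$. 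So when you write ``$\phi$ of it lands in the subspace $\phi(e_1) + \dots + \phi(e_k)$ of $H'$ --- but that by itself is the wrong target,'' you have in fact hit exactly the right target and then walked away from it. The remainder of your argument tries to show $\phi(\lin w) \perp \phi(e_j)$ for each $j$, which is false in general (take $w = v_1 + v_2$: its image lies in the plane $\phi(e_1) + \phi(e_2)$ but is typically orthogonal to neither summand), and the step ``$\phi(\lin w)$, lying in the plane, is either on the line $\phi(e_j)$ or is orthogonal to it'' is simply wrong --- a point of a $2$-plane spanned by two orthogonal lines need not coincide with either of them.

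The paper's proof uses precisely the observation you discarded. It verifies criterion~(5) of Lemma~\ref{lem:normal}: for an orthogonal basis $x_1,\dots,x_n$ of $H$, the images $\phi(\lin{x_i})$ are mutually orthogonal and hence distinct, so the lineation property applies at each step of a straightforward induction showing $\phi(\lin x) \subseteq \phi(\lin{x_1}) + \dots + \phi(\lin{x_n})$ for every $x \in H\withoutzero$.
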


\begin{proof}
Let $\phi \colon P(H) \to P(H')$ be a morphism in \LOS. By definition, $\phi$ preserves the orthogonality relation. Let $x, y, z \in H\withoutzero$ be such that $\phi(\lin x) \neq \phi(\lin y)$ and $z \in \lin{x, y}$. Let $y' \perp x$ be such that $\lin{x,y} = \lin{x,y'}$. By Lemma~\ref{lem:normal-implies-line-preserving}, $\phi(\lin y), \phi(\lin z) \subseteq \phi(\lin x) + \phi(\lin{y'})$. By assumption, $\phi(\lin{x}) \neq \phi(\lin{y})$, so that $\phi(\lin z) \subseteq \phi(\lin x) + \phi(\lin{y'}) = \phi(\lin x) + \phi(\lin{y})$. By criterion (L1'), $\phi$ is a lineation.

Conversely, let $\phi$ be an orthogonality-preserving lineation. Then $\phi$ is a homomorphism of orthogonality spaces. Let $x_1, \ldots, x_n$ be an orthogonal basis of $H$. We have to verify that $\phi(\lin x) \in \{ \phi(\lin{x_1}), \ldots, \phi(\lin{x_n}) \}\cc$, that is, $\phi(\lin x) \subseteq \phi(\lin{x_1}) + \ldots + \phi(\lin{x_n})$ for any $x \in H$; then it will follow by Lemma~\ref{lem:normal}, property (5), that $\phi$ is normal and hence a morphism.

Assume that $x \in \lin{x_1, x_2}$. We have that $\phi(\lin{x_1}) \perp \phi(\lin{x_2})$ and hence $\phi(\lin{x_1}) \neq \phi(\lin{x_2})$. As $\phi$ is a lineation, it follows $\phi(\lin x) \subseteq \phi(\lin{x_1}) + \phi(\lin{x_2})$. The assertion follows thus by an inductive argument.
\end{proof}

A morphism of \LOS\ being a lineation, the question seems natural whether it is non-degenerate. We consider the conditions (L2) and (L3), which define non-degeneracy, separately.

The latter condition is automatic, provided that we assume dimensions of at least $3$.

\begin{lemma} \label{lem:morphisms-L3}
Let $H$ and $H'$ be finite-dimensional Hermitian spaces and assume that the dimension of $H$ is at least $3$. Then any morphism in \LOS\ from  $P(H)$ to $P(H')$ is a lineation fulfilling {\rm (L3)}.
\end{lemma}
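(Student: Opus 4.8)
The plan is to show that if $\phi\colon P(H)\to P(H')$ is a morphism in \LOS\ and $\dim H\geq 3$, then the image of $\phi$ is not contained in a $2$-dimensional subspace of $H'$. By Theorem~\ref{thm:morphisms-in-LOS}, $\phi$ is an orthogonality-preserving lineation, so it certainly is a lineation; the only thing requiring work is (L3).

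First I would fix an orthogonal basis $x_1, x_2, x_3, \ldots, x_n$ of $H$, which exists since $H$ is an anisotropic Hermitian space of dimension $n \geq 3$. Since $x_i \perp x_j$ for $i\neq j$ and $\phi$ preserves orthogonality, the points $\phi(\lin{x_1}), \phi(\lin{x_2}), \phi(\lin{x_3})$ are pairwise orthogonal in $P(H')$. The key observation is that pairwise orthogonal points of a Hermitian space are linearly independent: if $y_1, y_2, y_3 \in H'\withoutzero$ are pairwise orthogonal and $\sum \gamma_i y_i = 0$, then taking the inner product with $y_j$ gives $\gamma_j \herm{y_j}{y_j} = 0$, and anisotropy forces $\gamma_j = 0$. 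Hence $\phi(\lin{x_1}), \phi(\lin{x_2}), \phi(\lin{x_3})$ span a $3$-dimensional subspace of $H'$, so the image of $\phi$ cannot lie in any $2$-dimensional subspace. This establishes (L3).

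There is no real obstacle here; the argument is short. The one point to be careful about is that the conclusion is purely about the existence of three mutually orthogonal points in the image, so we must invoke the fact that $H$ has an orthogonal basis of size $n\geq 3$ — for an anisotropic Hermitian space this is standard (Gram–Schmidt works because the form is anisotropic, so one never divides by zero), and the excerpt already freely uses orthogonal bases of $H$ in the proof of Theorem~\ref{thm:morphisms-in-LOS}. Thus the proof reduces to: extract three pairwise orthogonal points in the image via an orthogonal basis of $H$ and orthogonality-preservation, note that pairwise orthogonal nonzero vectors are linearly independent by anisotropy, and conclude the image spans at least a $3$-dimensional subspace, which is exactly (L3).
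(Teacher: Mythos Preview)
Your argument is correct and is essentially the paper's own proof written out in full: the paper simply notes that, since $\dim H \geq 3$, the image of $\phi$ contains three mutually orthogonal elements and hence cannot lie in a $2$-dimensional subspace. You have merely made explicit the two steps the paper leaves implicit, namely the existence of an orthogonal basis and the linear independence of mutually orthogonal nonzero vectors via anisotropy.
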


\begin{proof}
A morphism $\phi \colon P(H) \to P(H')$ in \LOS\ is by Theorem~\ref{thm:morphisms-in-LOS} an orthogonality-preserving lineation. Since $H$ is at least $3$-dimensional, the image of $\phi$ contains three mutually orthogonal elements. It follows that $\phi$ fulfils (L3).
\end{proof}

In the next lemma, $(\Three, \neq)$ is, in accordance with Example~\ref{ex:n-element-set}, the orthogonality space consisting of three mutually orthogonal elements.

\begin{lemma} \label{lem:three-morphism-1}
Let $H$ and $H'$ be Hermitian spaces of finite dimension $\geq 3$ over the $\star$-sfields $K$ and $K'$, respectively. Assume that there is a morphism in \LOS\ from $P(H)$ to $P(H')$ that does not fulfil {\rm (L2)}. Then there is a $3$-dimensional subspace $H_3$ of $H$ and a morphism in \NOS\ from $(P(H_3), \perp)$ to $(\Three, \neq)$.
\end{lemma}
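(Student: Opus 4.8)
The plan is to extract from the failure of (L2) a single line on which $\phi$ collapses to two points, and then use orthogonality-preservation together with normality to turn the two-element image of that line into a copy of $(\Three,\neq)$ sitting inside some $P(H_3)$.

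First I would unwind the negation of (L2): there are linearly independent $x,y\in H\withoutzero$ such that the set $L=\{\phi(\lin z)\colon z\neq 0,\ z\in\lin{x,y}\}$ has at most two elements. Since $\phi$ is orthogonality-preserving and $H$ is at least $3$-dimensional, the image of $\phi$ is not a single point (take any $e\perp f$ in the plane $\lin{x,y}^{\perp}$-augmented picture; more simply, a vector orthogonal to the plane maps to something orthogonal to all of $L$), so $L$ must have exactly two elements, say $L=\{p,q\}$ with $p\neq q$. Note $p\not\perp q$ in general is not guaranteed, but we do not need that; what matters is that the plane $P:=\lin{x,y}$ is sent onto $\{p,q\}$. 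Pick a third coordinate: since $\dim H\geq 3$, choose a nonzero $w\in H$ with $w\perp x$ and $w\perp y$, and set $H_3=\lin{x,y,w}$, a $3$-dimensional subspace of $H$. By Proposition~\ref{prop:subspace-linear-OS-is-linear}, $(P(H_3),\perp)$ is a linear, hence normal, orthogonality space, and $\phi$ restricted to $P(H_3)$ is still an orthogonality-preserving lineation, i.e.\ a morphism in \NOS\ (indeed in \LOS) by Theorem~\ref{thm:morphisms-in-LOS}.

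Next I would identify the target map. The restriction $\phi|_{P(H_3)}$ sends the line $P(P)\subseteq P(H_3)$ onto $\{p,q\}$ and sends $\lin w$ to some point $r$ which is orthogonal to every element of $P(P)$, in particular $r\perp p$ and $r\perp q$. Now here is the key reduction: $P(H_3)$ decomposes as the line $P(P)$ together with the lines through $\lin w$, and every point of $P(H_3)$ lies on a line meeting $P(P)$, so its image under $\phi$ is constrained by the lineation property. I claim the image of $\phi|_{P(H_3)}$ consists of at most three points, $\{p,q,r\}$, and that we may define a map $\psi\colon P(H_3)\to\Three$ by collapsing each fibre; the three elements of $\Three$ correspond to $p$, $q$, $r$. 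The point requiring care is that $p,q,r$ need not be mutually orthogonal in $H'$ — only $r$ is orthogonal to $p$ and $q$ — so $\psi$ as a map into $(\Three,\neq)$ must be justified abstractly: I would show that the assignment "$p\mapsto$ first element, $q\mapsto$ second, $r\mapsto$ third'' composed with $\phi$ is orthogonality-preserving from $P(H_3)$ into $\Three$, using that any two orthogonal points of $P(H_3)$ cannot both be mapped into $P$ (since $P$ maps to two non-orthogonal-in-general points — actually $p,q$ are the images of a full projective line, and two points on a line are never orthogonal to a common... hmm).

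The genuine obstacle, and where I would spend most effort, is precisely verifying that $\psi$ lands in $(\Three,\neq)$ as a \emph{homomorphism}, i.e.\ that orthogonal elements of $P(H_3)$ never have equal $\psi$-value. The cleanest route: first establish that $\phi|_{P(H_3)}$ has image exactly $\{p,q,r\}$ and that the fibre over $r$ is $\{\lin w\}$ while the fibres over $p$ and over $q$ partition $P(P)$; this uses the lineation property applied to lines joining $\lin w$ to points of $P(P)$, which must map into $\lin{p,r}$ or $\lin{q,r}$, forcing every point off $P(P)$ to map to $p$, $q$, or $r$, and in fact (by orthogonality with $w$) points off $P(P)$ other than $\lin w$ cannot map to $r$ unless... — this step needs the observation that a line through $\lin w$ and a point $\lin u\in P(P)$ contains points collinear with things in $P(P)$, pinning their image. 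Then: two orthogonal points $e,f\in P(H_3)$; if both lie in $P(P)$ they would be orthogonal points on a projective line of the \emph{linear} space $P(H_3)$, which forces them equal (a line in a Hermitian plane has at most... no — $\lin x\perp\lin y$ with both in $P$ is possible). So instead I use: $e\perp f$ implies $\phi(e)\perp\phi(f)$, and since at most one of $p,q$ can be orthogonal to a given image point while $r$ is orthogonal to both $p,q$, a short case analysis on which of $\{p,q,r\}$ the values $\phi(e),\phi(f)$ take — combined with the fibre description — yields $\psi(e)\neq\psi(f)$. Once that is in place, $\psi=\iota\circ(\phi|_{P(H_3)})$ is an orthogonality-preserving map, hence a homomorphism, and normality of $\psi$ follows from Lemma~\ref{lem:normal}(5) since $(\Three,\neq)=\{\psi(e_1),\psi(e_2),\psi(e_3)\}\cc$ trivially for the (unique up to the collapse) maximal orthogonal triple; thus $\psi$ is a morphism in \NOS, completing the proof.
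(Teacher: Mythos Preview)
Your setup is fine and matches the paper's: you may take the collapsing line to be $\{e,f\}\cc$ with $e\perp f$ (so that $p=\phi(e)$ and $q=\phi(f)$ are automatically orthogonal), pick $w\perp e,f$, and set $H_3=\lin{e,f,w}$. The restriction of $\phi$ to $P(H_3)$ is then a normal homomorphism into $P(H_3')$ where $H_3'$ is spanned by $p,q,r$; up to here everything is correct.

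The genuine gap is your claim that $\phi|_{P(H_3)}$ has image exactly $\{p,q,r\}$. This is false in general, and your sketch of why it should hold (``points off $P(P)$ other than $\lin w$ cannot map to $r$ unless\dots'') never closes. What one can actually prove from the lineation property is only that the image is contained in the union of the two \emph{lines} $\{p,r\}\cc\cup\{q,r\}\cc$ (this is the paper's formula~(\ref{fml:three-morphism-0})). Nothing prevents $\phi$ from hitting a point $h'\in\{p,r\}\cc$ distinct from both $p$ and $r$: take any $h\in\{e,w\}\cc$; then $\phi(h)\in\{p,r\}\cc$, but there is no reason for $\phi(h)\in\{p,r\}$. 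So the image can be a genuine subset of a line union a line, not three points, and the map ``send $\phi(x)$ to the corresponding element of $\Three$'' is simply undefined.

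The paper repairs this with a case split. If both orthogonal lines $\{e,w\}\cc$ and $\{f,w\}\cc$ also collapse to two points under $\phi$, then one checks directly that the image is $\{p,q,r\}$ and $\psi=\phi$ works. Otherwise --- say some $h\in\{e,w\}\cc$ has $\phi(h)=h'\notin\{p,r\}$ --- one first shows, using $h'$, that the image is contained in $\{p,r\}\cc\cup\{q\}$ (a whole line together with a single isolated point), and then composes $\phi$ with an auxiliary orthogonality-preserving map $\tau\colon\{p,r\}\cc\to\{p,r\}$ to collapse that remaining line. The existence of such a $\tau$ is easy (the orthogonality relation on a rank-$2$ linear space is a perfect matching, hence $2$-colourable), but it is an essential extra ingredient that your plan does not contain.
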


\begin{proof}
For convenience, we will formulate this proof in the language of orthogonality spaces rather than linear spaces.

Let $\phi \colon P(H) \to P(H')$ be a morphism in \LOS\ that violates {\rm (L2)}. By Theorem~\ref{thm:morphisms-in-LOS}, $\phi$ is an orthogonality-preserving lineation. Moreover, there are $e, f \in P(H)$ such that $e \perp f$ and the image of $\{e,f\}\cc$ under $\phi$ contains exactly two elements. Thus $\phi(\{e,f\}\cc) = \{e', f'\}$, where $e' = \phi(e)$ and $f' = \phi(f)$. We choose a $g \in P(H)$ be such that $g \perp e, f$. Then $e'$, $f'$, and $g' = \phi(g)$ are mutually orthogonal.

Let $H_3$ be the $3$-dimensional subspace of $H$ spanned by $e$, $f$, and $g$. Then we have that $\{ e, f, g \}\cc = P(H_3)$, the orthogonality relation being induced by the inner product on $H_3$. Similarly, let $H'_3$ be the subspace of $H$ spanned by $e'$, $f'$, and $g'$, so that $\{ e', f', g' \}\cc = P(H'_3)$. As $\phi$ is normal, we conclude from Lemma~\ref{lem:normal}, property (3), that the image of $P(H_3)$ under $\phi$ is contained in $P(H'_3)$. In other words, $\phi|_{P(H_3)}$ is an orthogonality-preserving lineation from $P(H_3)$ to $P(H'_3)$.

$(\{ e, f, g \}\cc, \perp)$ is, by Proposition~\ref{prop:subspace-linear-OS-is-linear}, a linear orthogonality space. Furthermore, $\{ e', f', g' \}$, together with the orthogonality relation of $P(H'_3)$, is an orthogonality space isomorphic to $(\Three, \neq)$. In particular, $(\{ e', f', g' \}, \perp)$ is normal. Our aim is to show that there is an orthogonality-preserving map $\psi \colon \{ e, f, g \}\cc \to \{ e', f', g' \}$. Then it will follow that $\psi$ is a morphism of \NOS\ and the lemma will be proved. For, such a map is a homomorphism of orthogonality spaces, and since the image of any set of three mutually orthogonal elements in $\{e,f,g\}\cc$ is $\{ e', f', g' \}$, the normality holds by Lemma~\ref{lem:normal}, property (4).

By Lemma~\ref{lem:normal-implies-line-preserving} we observe that, for any $x \in \{e,f\}\cc$ such that $\phi(x) = e'$, we have $\phi(\{g,x\}\cc) \subseteq \{e',g'\}\cc$, and for any $x \in \{e,f\}\cc$ such that $\phi(x) = f'$, we have $\phi(\{g,x\}\cc) \subseteq \{f',g'\}\cc$. Furthermore, for any $y \in \{e,f,g\}\cc$, there is an $x \in \{e,f\}\cc$ such that $y \in \{g,x\}\cc$. We conclude that
\begin{equation} \label{fml:three-morphism-0}
\phi(\{e,f,g\}\cc) \subseteq \{e',g'\}\cc \cup \{f',g'\}\cc.
\end{equation}
We now distinguish three cases.

{\it Case 1.} There is an $h \in \{e,g\}\cc$ such that $h' = \phi(h) \neq e', g'$. Note that $h' \in \{e',g'\}\cc$. We claim that
\begin{equation} \label{fml:three-morphism-1}
\phi(\{f,h\}\cc) \;=\; \{f',h'\}.
\end{equation}
Let $x \in \{f,h\}\cc$. Since $x \neq g$, there is a unique $t \in \{e,f\}\cc$ such that $x \in \{g,t\}\cc$. Depending on whether $\phi(t) = e'$ or $\phi(t) = f'$, we have that $\phi(x) \in \{e',g'\}\cc$ or $\{f',g'\}\cc$. Furthermore, since $\phi(x) \in \{f',h'\}\cc$, we conclude that either $\phi(x) = h'$ or $\phi(x) = f'$. Thus (\ref{fml:three-morphism-1}) is shown.

We next claim that
\begin{equation} \label{fml:three-morphism-2}
\phi(\{e,f,g\}\cc) \subseteq \{e',g'\}\cc \cup \{f'\}.
\end{equation}
Let $x \in \{e,f,g\}\cc$ such that $\phi(x) \notin \{e',g'\}\cc$. Note that then $\phi(x) \in \{ f', g' \}\cc$ by (\ref{fml:three-morphism-0}). Moreover, there is a unique $y \in \{f,h\}\cc \cap \{e,x\}\cc$. Then $x \in \{ e, y \}\cc$ and, by (\ref{fml:three-morphism-1}), either $\phi(y) = h'$ or $\phi(y) = f'$. If $\phi(y) = h'$, then $\phi(x) \in \{e',h'\}\cc = \{e',g'\}\cc$, in contradiction to our assumption. Hence we have $\phi(y) = f'$ and $\phi(x) \in \{e',f'\}\cc$ and we conclude $\phi(x) = f'$. Thus (\ref{fml:three-morphism-2}) is shown.

Finally, let $\tau \colon \{e',g'\}\cc \to \{e',g'\}$ be any orthogonality-preserving map. We define
\[ \psi \colon \{ e, f, g \}\cc \to \{ e', f', g' \} \komma x \mapsto
\begin{cases} \tau(\phi(x)) & \text{if $\phi(x) \in \{e',g'\}\cc$,} \\
                         f' & \text{if $\phi(x) = f'$.}
\end{cases} \]
Then $\psi$ is orthogonality-preserving, as desired.

{\it Case 2.} There is a $h \in \{f,g\}\cc$ such that $\phi(h) \neq f', g'$. Then we argue similarly to Case 1.

{\it Case 3.} $\phi(\{ e, g \}\cc) = \{ e', g' \}$ and $\phi(\{ f, g \}\cc) = \{ f', g' \}$. Let then $x \in \{e,f,g\}\cc$ such that $x \notin \{e,g\}\cc \cup \{f,g\}\cc$. By (\ref{fml:three-morphism-0}), $\phi(x) \in \{e',g'\}\cc$ or $\phi(x) \in \{f',g'\}\cc$. In the former case, let $y \in \{e,g\}\cc$ be such that $x \in \{f,y\}\cc$; then $\phi(y) = e'$ and hence also $\phi(x) = e'$, or $\phi(y) = g'$ and hence $\phi(x) = g'$. In the latter case, we similarly see that $\phi(x) = f'$ or $\phi(x) = g'$.

We conclude that $\phi(\{e,f,g\}\cc) = \{ e', f', g' \}$. Taking $\psi = \phi$, we again have that $\psi$ is orthogonality-preserving.
\end{proof}

We may formulate the conclusion of Lemma~\ref{lem:three-morphism-1} measure-theoretically. By a {\it measure} on a finite-dimensional Hermitian space $H$, we mean a map $\mu$ from ${\mathcal C}(H)$ to the real unit interval such that (i) $\mu(A + B) = \mu(A) + \mu(B)$ for any orthogonal subspaces $A$ and $B$ of $H$ and (ii) $\mu(H) = 1$. We call a measure {\it two-valued} if its image consists of $0$ and $1$ only.

\begin{lemma} \label{lem:three-morphism-2}
Let $H$ and $H'$ be Hermitian spaces of finite dimension $\geq 3$ over the $\star$-sfields $K$ and $K'$, respectively. Assume that there is a morphism in \LOS\ from $P(H)$ to $P(H')$ that does not fulfil {\rm (L2)}. Then there is a $3$-dimensional Hermitian space over $K$ that possesses a two-valued measure.
\end{lemma}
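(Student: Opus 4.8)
The plan is to convert the combinatorial conclusion of Lemma~\ref{lem:three-morphism-1} into a measure on a three-dimensional Hermitian space. By Lemma~\ref{lem:three-morphism-1}, the hypothesis yields a $3$-dimensional subspace $H_3$ of $H$ together with a morphism $\psi$ in \NOS\ from $(P(H_3), \perp)$ to $(\Three, \neq)$. The idea is to fix one of the three elements of $\Three$, say the element $e'$, and define $\mu \colon {\mathcal C}(H_3) \to [0,1]$ by declaring, for $A \in {\mathcal C}(H_3)$, that $\mu(A) = 1$ if $e' \in \psi(A)\cc = \{\psi(a) \colon a \in A\}\cc$ and $\mu(A) = 0$ otherwise. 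Equivalently, since $\Three$ is just a three-element set with the unequal relation, $\psi(A)\cc$ is either a singleton, a two-element set, the empty set, or all of $\Three$; one sets $\mu(A) = 1$ precisely when $e'$ lies in $\bar\psi(A)$, where $\bar\psi$ is the induced map on orthoclosed sets.

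The key steps, in order, are as follows. First, invoke Lemma~\ref{lem:three-morphism-1} to obtain $H_3$ and the \NOS-morphism $\psi \colon P(H_3) \to \Three$; note $P(H_3)$ carries the orthogonality relation inherited from the inner product on $H_3$ and is, by Proposition~\ref{prop:subspace-linear-OS-is-linear}, linear, hence by Theorem~\ref{thm:linear-vs-Dacey} a Dacey space, so ${\mathcal C}(P(H_3),\perp)$ is orthomodular and can be identified with ${\mathcal C}(H_3)$. Second, define $\mu$ as above and check $\mu(H_3) = 1$: a maximal orthogonal subset of $P(H_3)$ has three elements, and by normality of $\psi$ (Lemma~\ref{lem:normal}, property (4)) its image generates all of $\Three$, so $\bar\psi(H_3) = \Three \ni e'$. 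Third, verify additivity. Given orthogonal $A, B \in {\mathcal C}(H_3)$ with $A + B = H_3$ (the general finite-additivity case reduces to this by extending to a maximal orthogonal family and summing), one has $\bar\psi(A) \perp \bar\psi(B)$ in $\Three$ because $\psi$ is orthogonality-preserving, and $\bar\psi(A) \vee \bar\psi(B) = \bar\psi(A+B) = \Three$ by normality (Lemma~\ref{lem:normal}, property (2)). In $\Three$ with relation $\neq$, two orthoclosed subsets whose join is everything and which are mutually orthogonal must be complementary, so exactly one of them contains $e'$; hence exactly one of $\mu(A), \mu(B)$ equals $1$, giving $\mu(A) + \mu(B) = 1 = \mu(H_3)$. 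For a general orthogonal pair $A, B$ (not summing to $H_3$), extend $\{A, B\}$ to a decomposition $A + B + C = H_3$ and apply the same reasoning to the three pairwise-orthogonal pieces, using that $e'$ lies in exactly one of $\bar\psi(A), \bar\psi(B), \bar\psi(C)$. Fourth, confirm $\mu$ is two-valued, which is immediate from its definition.

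The main obstacle I anticipate is the bookkeeping around additivity: making precise the passage from the defining property of a measure (additivity over arbitrary orthogonal pairs of subspaces) to the clean three-element partition picture in $\Three$, while correctly using property (2) of Lemma~\ref{lem:normal} to transport joins through $\bar\psi$. One must be careful that $\bar\psi$ of a join of finitely many of the atomic pieces is the corresponding join in $\Three$ — this is exactly normality of $\psi$ — and that in $\Three$ the lattice ${\mathcal C}(\Three, \neq)$ is the full powerset of a three-element set, so the relevant joins and complements behave set-theoretically. Once that is in place, the verification that $e'$ is contained in exactly one block of any orthogonal partition of $\Three$, hence that $\mu$ adds up correctly, is routine. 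A minor point to handle cleanly is that the additivity axiom $\mu(A+B) = \mu(A) + \mu(B)$ must be checked for all orthogonal $A,B$, not only complementary ones; refining an arbitrary orthogonal pair to a maximal orthogonal decomposition of $H_3$ and then grouping disposes of this.
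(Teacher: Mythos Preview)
Your proposal is correct and follows essentially the same route as the paper: invoke Lemma~\ref{lem:three-morphism-1} to obtain $H_3$ and a morphism $\psi\colon P(H_3)\to\Three$, single out one element $e'\in\Three$, and let $\mu$ record whether that element is hit. The paper's proof is a two-line remark (``assigning $1$ to one of the elements of~$\Three$ and $0$ to the remaining two, we may construct a two-valued measure''), whereas you spell out the additivity check via $\bar\psi$ and Lemma~\ref{lem:normal}; this extra bookkeeping is sound and in fact makes the well-definedness of $\mu$ on $2$-dimensional subspaces transparent, which the paper leaves implicit.
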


\begin{proof}
By Lemma~\ref{lem:three-morphism-1}, there is a $3$-dimensional subspace $H_3$ of $H$ and a map from $P(H_3)$ to the $3$-element set $\Three$ such that any three mutually orthogonal elements of $P(H_3)$ are mapped to distinct elements. Assigning $1$ to one of the elements of $\Three$ and $0$ to the remaining two, we may construct a two-valued measure as asserted.
\end{proof}

Combining the results that we have achieved so far, we arrive at the following statement.

\begin{proposition} \label{prop:morphisms-of-LOS-and-semiunitary-maps}
Let $H$ and $H'$ be Hermitian spaces of finite dimension $\geq 3$ over $\star$-fields. Assume moreover that {\rm (1)} $H$ possesses a basis of vectors of equal length and {\rm (2)} no $3$-dimensional subspace of $H$ possesses a two-valued measure. Then any morphism in \LOS\ between $P(H)$ and $P(H')$ is induced by a generalised semiunitary map.
\end{proposition}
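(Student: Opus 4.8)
The plan is to chain together the three main results that have been assembled: Theorem~\ref{thm:morphisms-in-LOS} identifies morphisms in \LOS\ with orthogonality-preserving lineations, Theorem~\ref{thm:generalised-semiunitary-maps} produces a generalised semiunitary map from a non-degenerate orthogonality-preserving lineation when $H$ has an orthogonal basis of vectors of equal length, and Lemmas~\ref{lem:morphisms-L3} and~\ref{lem:three-morphism-2} control the two non-degeneracy conditions (L2) and (L3). So the whole argument is really a matter of verifying that, under hypotheses~(1) and~(2), a morphism in \LOS\ between $P(H)$ and $P(H')$ is in fact non-degenerate.

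First I would let $\phi \colon P(H) \to P(H')$ be a morphism in \LOS. By Theorem~\ref{thm:morphisms-in-LOS}, $\phi$ is an orthogonality-preserving lineation. Since $\dim H \geq 3$, Lemma~\ref{lem:morphisms-L3} gives (L3) immediately. For (L2), I would argue by contradiction: if $\phi$ failed (L2), then by Lemma~\ref{lem:three-morphism-2} some $3$-dimensional subspace of $H$ would carry a two-valued measure, contradicting hypothesis~(2). Hence $\phi$ satisfies both (L2) and (L3), i.e.\ $\phi$ is a non-degenerate lineation that preserves orthogonality. Now hypothesis~(1) supplies exactly the remaining input needed for Theorem~\ref{thm:generalised-semiunitary-maps} — namely that $H$ has an orthogonal basis of vectors of equal length, and $K, K'$ are $\star$-fields — so that theorem yields a generalised semiunitary map $U \colon F_H \to H'$ inducing $\phi$, which is the assertion.

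I do not expect a serious obstacle here: the proposition is essentially a bookkeeping corollary, and the only things to be careful about are the quantifiers over subspaces (the hypothesis~(2) must be read as ``\emph{no} $3$-dimensional subspace has a two-valued measure'', matching precisely the conclusion of Lemma~\ref{lem:three-morphism-2}) and the fact that ``basis of vectors of equal length'' in hypothesis~(1) is the same as ``orthogonal basis of vectors of equal length'' required by Theorem~\ref{thm:generalised-semiunitary-maps} (an orthogonal basis is what ``basis'' tacitly means in the Hermitian-space context of this paper, or else one should spell this out). If anything needs more than a sentence, it is reconciling the phrasing of the non-degeneracy hypotheses with the exact statements of the cited lemmas; the logical skeleton itself is a two-line contrapositive plus an invocation of Theorem~\ref{thm:generalised-semiunitary-maps}.

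Thus the proof I would write is short: state that $\phi$ is an orthogonality-preserving lineation by Theorem~\ref{thm:morphisms-in-LOS}; note (L3) by Lemma~\ref{lem:morphisms-L3}; deduce (L2) from the contrapositive of Lemma~\ref{lem:three-morphism-2} using hypothesis~(2); conclude $\phi$ is non-degenerate; and apply Theorem~\ref{thm:generalised-semiunitary-maps} using hypothesis~(1) to obtain the representing generalised semiunitary map.
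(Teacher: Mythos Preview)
Your proposal is correct and matches the paper's own proof essentially line for line: invoke Theorem~\ref{thm:morphisms-in-LOS} to get an orthogonality-preserving lineation, use Lemma~\ref{lem:morphisms-L3} for (L3), derive (L2) by contraposition from Lemma~\ref{lem:three-morphism-2} and hypothesis~(2), and finish with Theorem~\ref{thm:generalised-semiunitary-maps} via hypothesis~(1). Your caution about ``basis'' versus ``orthogonal basis'' is apt---the paper silently reads hypothesis~(1) as an orthogonal basis, and you are right to flag that identification.
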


\begin{proof}
Let $\phi \colon P(H) \to P(H')$ be a morphism in \LOS. By Theorem~\ref{thm:morphisms-in-LOS}, $\phi$ is an orthogonality-preserving lineation and by Lemma~\ref{lem:morphisms-L3}, $\phi$ fulfils (L3).

Assume now that $\phi$ does not fulfil (L2). By Lemma~\ref{lem:three-morphism-2}, $H_3$ possesses a two-valued measure, in contradiction to condition (2). We conclude that $\phi$ does fulfil (L2) and is hence non-degenerate.

Taking into account condition (1), the assertion now follows by Theorem~\ref{thm:generalised-semiunitary-maps}.
\end{proof}

The question arises when a Hermitian space fulfils the conditions of Proposition~\ref{prop:morphisms-of-LOS-and-semiunitary-maps}, which are admittedly rather technical. It would in particular be desirable to find a formulation referring to the scalar fields only. Whereas an answer seems to be difficult to find in general, the situation is somewhat more transparent in the case of Baer ordered $\star$-fields.

\begin{theorem} \label{thm:morphisms-of-LOS-and-semiunitary-maps}
Let $H$ be a positive definite Hermitian space of finite dimension $\geq 3$ over the Baer ordered $\star$-field $K$, and let $H'$ be a further finite-dimensional Hermitian space. Assume moreover that $K$ has the following properties:
\begin{itemize}

\item[\rm (1)] For any $\alpha \in S_K$ such that $\alpha \geq 0$, there is a $\beta \in K$ such that $\alpha = \beta \beta^\star$.

\item[\rm (2)] $K^3$, endowed with the standard inner product, does not possess a two-valued measure.

\end{itemize}
Then any morphism in \LOS\ between $P(H)$ and $P(H')$ is induced by a generalised semiunitary map.
\end{theorem}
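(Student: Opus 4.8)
The plan is to deduce this theorem from Proposition~\ref{prop:morphisms-of-LOS-and-semiunitary-maps} by checking that the two hypotheses assumed there follow from the present conditions together with the positive definiteness of $H$ and the Baer orderability of $K$. Concretely, I would first establish that $H$ admits an orthonormal basis. Since $H$ is positive definite, it is anisotropic, so Gram--Schmidt orthogonalisation applies and yields an orthogonal basis $c_1, \ldots, c_n$ of $H$. Each $\herm{c_i}{c_i}$ lies in the fixed field $S_K$ and, by positive definiteness, is strictly positive. Condition (1) then provides $\delta_i \in K$ with $\herm{c_i}{c_i} = \delta_i \delta_i^\star$, and upon replacing $c_i$ by $\delta_i^{-1} c_i$ we obtain $\herm{\delta_i^{-1} c_i}{\delta_i^{-1} c_i} = (\delta_i \delta_i^\star)^{-1} \herm{c_i}{c_i} = 1$. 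Thus $H$ has an orthonormal basis, in particular a basis of vectors of equal length, which is hypothesis (1) of Proposition~\ref{prop:morphisms-of-LOS-and-semiunitary-maps}.

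Next I would verify hypothesis (2) of that proposition, namely that no $3$-dimensional subspace of $H$ possesses a two-valued measure. Let $H_3 \subseteq H$ be a $3$-dimensional subspace. The sesquilinear form restricted to $H_3$ is again positive definite, hence anisotropic, so the argument of the previous paragraph applies verbatim and produces an orthonormal basis $c_1, c_2, c_3$ of $H_3$. The linear map $K^3 \to H_3$ sending the $i$-th standard basis vector to $c_i$ is then an isometry with respect to the standard inner product on $K^3$, and it induces an isomorphism of ortholattices ${\mathcal C}(K^3) \to {\mathcal C}(H_3)$ (all subspaces being closed in the finite-dimensional setting). Consequently any two-valued measure on $H_3$ pulls back along this isomorphism to a two-valued measure on $K^3$, contradicting condition (2). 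Hence no $3$-dimensional subspace of $H$ admits a two-valued measure, and Proposition~\ref{prop:morphisms-of-LOS-and-semiunitary-maps} applies and gives the assertion.

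I do not expect a genuine obstacle here; the theorem is essentially a repackaging of the proposition in terms of conditions on the scalar field. The only points that need care are that positive definiteness is inherited by subspaces and that Gram--Schmidt orthogonalisation is valid for anisotropic sesquilinear forms over the $\star$-field $K$, so that the normalisation step using condition (1) can be carried out uniformly both for $H$ itself and for each of its $3$-dimensional subspaces; everything else is routine bookkeeping.
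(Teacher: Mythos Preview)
Your proposal is correct and follows essentially the same route as the paper: verify the two hypotheses of Proposition~\ref{prop:morphisms-of-LOS-and-semiunitary-maps} by normalising vectors via condition~(1) to obtain orthonormal bases, and then identifying any $3$-dimensional subspace $H_3$ with $K^3$ so that condition~(2) rules out two-valued measures. The paper's argument is a touch terser (it observes directly that every one-dimensional subspace contains a unit vector, whence orthonormal bases exist), but the substance is identical.
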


\begin{proof}
Let $x \in H\withoutzero$. As $H$ is positive definite, there is, by (1), a $\beta \in K$ such that $\herm{x}{x} = \beta \beta^\star$ and hence $\herm{\frac 1 \beta x}{\frac 1 \beta x} = 1$. That is, every subspace of $H_3$ contains a unit vector and $H_3$ possesses an orthonormal basis.

It furthermore follows that $H_3$ can be identified with $K^3$, endowed with the standard inner product. Thus, by (2), $H_3$ does not possess a two-valued measure. The assertion follows now from Proposition~\ref{prop:morphisms-of-LOS-and-semiunitary-maps}.
\end{proof}

Theorem~\ref{thm:morphisms-of-LOS-and-semiunitary-maps} leads in turn to the question how to characterise the Baer ordered $\star$-fields $K$ fulfilling condition (2), that is, how to exclude the existence of two-valued measures on $K^3$. We will give a sufficient criterion; see Lemma~\ref{lem:Euclidean-fixed-field-and-two-valued-measures} below. We need several preparatory steps, some of which might be interesting in their own right.

We recall that an ordered field is called {\it Euclidean} if any positive element is a square. In the proof of the next lemma, we follow the lines of Piron's proof of Gleason's Theorem \cite[p.~75--78]{Pir}.

\begin{theorem} \label{thm:classical-space-no-two-valued-measures}
A $3$-dimensional positive definite Hermitian space over a Euclidean subfield of $\Reals$ does not possess a two-valued measure.
\end{theorem}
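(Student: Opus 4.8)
The plan is to argue by contradiction, following the lines of Piron's proof of Gleason's Theorem \cite[p.~75--78]{Pir}. Let $H$ be a $3$-dimensional positive definite Hermitian space over a Euclidean subfield $K$ of $\Reals$, and suppose $\mu \colon {\mathcal C}(H) \to [0,1]$ is a two-valued measure. As in the proof of Theorem~\ref{thm:morphisms-of-LOS-and-semiunitary-maps}, the hypothesis that $K$ is Euclidean lets us normalise any vector of positive length, so that $H$ has an orthonormal basis and may be identified with $K^3$ under the standard inner product; then ${\mathcal C}(H)$ is the lattice of all linear subspaces. Restricting $\mu$ to lines yields a map $f$ on the rays of $H$, with values in $\{0,1\}$, such that $f(\lin{a}) + f(\lin{b}) + f(\lin{c}) = \mu(H) = 1$ for every orthonormal basis $a,b,c$; thus each orthonormal triple contains exactly one \emph{black} ray, namely one on which $f=1$. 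I would first record the elementary consequences used below: $\mu$ is monotone, so any subspace containing a black ray has measure $1$; hence every ray orthogonal to a black ray is white; and any two black rays are non-orthogonal.

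The core of Piron's argument is a geometric propagation lemma: if a ray is black, then so is every ray lying within a fixed positive angular distance of it. I would establish this by constructing the relevant auxiliary orthonormal triples and performing the in-plane rotations exactly as in \cite{Pir}, checking at each step that the only operations these constructions demand of the scalars are divisions and extractions of square roots of positive elements --- equivalently, normalisations of vectors of positive length --- all of which are available precisely because $K$ is Euclidean. This is the only point at which the hypothesis on $K$ is used, and I expect it to be the main obstacle; everything else is bookkeeping. For the classical case $K = \Reals$ one would finish by invoking connectedness of the sphere, but that route is closed here, so it is essential that the lemma yields a uniform angular threshold and not merely the openness of the set of black rays.

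To conclude, observe that a Euclidean subfield of $\Reals$ contains $\Rationals$ and is therefore dense in $\Reals$; consequently any two rays of $H$ can be joined by a finite chain of rays of $H$ whose consecutive angular gaps fall below the threshold furnished by the propagation lemma. Since $f$ is not identically zero --- an orthonormal triple sums to $1$ --- some ray is black, and propagating along such chains forces every ray of $H$ to be black. But then any orthonormal basis $a,b,c$ would satisfy $1 = f(\lin{a}) + f(\lin{b}) + f(\lin{c}) = 3$, which is absurd. Hence $H$ admits no two-valued measure.
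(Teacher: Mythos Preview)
Your setup is correct and matches the paper's, but the ``propagation lemma'' you attribute to Piron is not what his argument delivers, and this is a genuine gap. Piron's line and rotation constructions do not spread black rays outward; they \emph{confine} them. Fixing a black ray $b_3$ and working in the affine chart $\iota \colon (\alpha,\beta) \mapsto \lin{(\alpha,\beta,1)}$, the auxiliary lemmas combine to show that $\bar\mu(r) = 0$ whenever $\norm{r} > 1$, i.e.\ every ray at angle greater than $\tfrac{\pi}{4}$ from $b_3$ is white. The constructions require only square roots of positive elements (together with the purely real-analytic fact $\lim_n \cos^n\!\tfrac{\pi}{n} = 1$), so they do go through over a Euclidean subfield of $\Reals$. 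But nothing in them forces rays \emph{near} $b_3$ to be black; the argument bounds the black set from above, not from below. Your uniform-threshold spreading lemma is therefore unsupported by the cited source, and the density-and-chaining conclusion rests entirely on it.

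The paper finishes in the opposite direction and avoids chaining altogether. Having confined the black rays to the cone $\norm{r} \leq 1$ about $b_3$, it simply exhibits three mutually orthogonal rays all lying outside that cone --- namely $\iota(2,0)$, $\iota(-\tfrac12,1)$, $\iota(-\tfrac12,-\tfrac54)$, whose coordinates are rational and hence in $K$. All three are white, so their measures sum to $0 \neq 1$, contradicting $\mu(H)=1$. No density or connectedness is invoked; a single explicit orthogonal triple produces the contradiction. If you want to salvage your route, you would need to supply an actual proof of the spreading lemma, and that is not a rearrangement of Piron's steps but a different argument.
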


\begin{proof}
Let $R$ be a Euclidean subfield of the reals and let $H$ be a $3$-dimensional positive definite Hermitian space over $R$.

We claim that each one-dimensional subspace of $H$ possesses a unit vector. Indeed, the only automorphism of $R$ is the identity, hence $\star = \id$. Hence the assertion follows like in the proof of Theorem~\ref{thm:morphisms-of-LOS-and-semiunitary-maps}.

Let us assume that there is a two-valued measure $\mu$ on $H$, that is, a map $\mu \colon P(H) \to \{0,1\}$ such that, among any three orthogonal elements $\lin x, \lin y, \lin z \in P(H)$, exactly one is mapped to $1$. Pick $b_3 \in H\withoutzero$ such that $\mu(\lin{b_3}) = 1$ and let $b_1, b_2, b_3$ be an orthogonal basis of $H$. By the previous paragraph, we can suppose that $b_1, b_2, b_3$ are unit vectors. We may hence identify $H$ with $R^3$, endowed with the standard inner product.

We have that $\mu(\lin{\vector{0}{0}{1}}) = 1$ and consequently $\mu(\lin{\vector{\alpha}{\beta}{0}}) = 0$ for any elements $\alpha, \beta \in R$ that are not both $0$. The map $\iota \colon R^2 \to P(R^3) \komma (\alpha,\beta) \mapsto \lin{\vector{\alpha}{\beta}{1}}$ establishes a one-to-one correspondence between $R^2$ and the set of those elements of $P(R^3)$ that are not orthogonal to $b_3$. We shall write $\bar\mu$ for $\mu \circ \iota$. Let $\bar 0$ be the origin of $R^2$; then $\bar\mu(\bar 0) = 1$. We proceed by showing several auxiliary statements.

(a) Let $L \subseteq R^2$ be a line and let $r \in L$ be the element closest to $\bar 0$. Then $\bar\mu(r) \geq \bar\mu(s)$ for any $s \in L$.

Proof of (a): We may assume that $s \neq r$. Let $\lin l \in P(R^3)$ be parallel to $L$. Then $\lin l \perp \iota r$ and $\mu(\lin l) = 0$. Moreover, let $t \in L$ be such that $\iota t \perp \iota s$. Then $\iota r$ and $\lin l$ span the same $2$-dimensional subspace of $R^3$ as $\iota s$ and $\iota t$. Hence $\bar\mu(r) = \bar\mu(r) + \mu(\lin l) = \bar\mu(s) + \bar\mu(t) \geq \bar\mu(s)$.

We will denote by $\norm r$ the (Euclidean) distance between $\bar 0$ and some $r \in R^2$.

(b) For any $r \in R^2$ and $\tau \in R$ such that $0 < \tau \leq 1$, we have $\bar\mu(r) \leq \bar\mu(\tau \cdot r)$.

Proof of (b): Let $r^\perp$ arise from rotating $r$ by $\frac \pi 2$. Consider
\[ s \;=\; \tau \cdot r + \sqrt{\tau(1-\tau)} \cdot r^\perp. \]
Then $\angle \bar 0 \, s \, r = \frac \pi 2$, hence $\bar\mu(r) \leq \bar\mu(s)$ by (a). Likewise, we have $\angle s \, \tau r \, \bar 0 = \frac \pi 2$, hence $\bar\mu(s) \leq \bar\mu(\tau r)$ again by (a).

In what follows, $D_\omega \colon R^2 \to R^2$ denotes the rotation by $\omega$.

(c) Let $r \in R^2$, $\; n \geq 1$, and $s = \cos \frac{\pi}{2^n} D_{\frac{\pi}{2^n}} r$. Then $s \in R^2$ and $\bar\mu(r) \leq \bar\mu(s)$.

Proof of (c): From the fact that, for any $x \in \Reals$, we have $\cos^2 \frac x 2 = \frac 1 2 (1+\cos x)$, we conclude that $\cos \frac{\pi}{2^n}, \sin \frac{\pi}{2^n} \in R$ for all $n$. It follows that $s \in R^2$. Moreover, $\angle r \, s \, \bar 0 = \frac \pi 2$. Hence the last assertion follows from (a).

(d) $\lim_{n \to \infty} \cos^n \frac{\pi}{n} = 1$.

Proof of (d): By L'Hospital's rule, we have $\lim_{x \to 0} \frac{\ln \cos \pi x} x = -\pi \lim_{x \to 0} \tan \pi x = 0$. Hence $\lim_{n \to \infty} \ln \cos^n \frac{\pi}{n} = \lim_{n \to \infty} n \ln \cos \frac{\pi}{n} = 0$ as well and the assertion follows.

(e) Let $r \in R^2$ such that $\norm r > 1$. Then $\bar\mu(r) \leq \bar\mu(-\frac 1 {{\norm r}^2} r)$.

Proof of (e): Because of (d), we may choose an $m$ large enough such that $\cos^{2^m} \!\! \frac{\pi}{2^m} > \frac 1 {{\norm r}^2}$. Let $\omega = \frac{\pi}{2^m}$ and define a sequence $r^{(i)} \in R^2$, $\, i = 0, \ldots, 2^m$, as follows:
\[ r^{(0)} = r, \quad r^{(i+1)} = \cos \omega \cdot D_\omega r^{(i)} \text{ for $0 \leq i < 2^m$}. \]
By (c), $r^{(i)} \in R^2$ for all $i$, and $\bar\mu(r) = \bar\mu(r^{(0)}) \leq \bar\mu(r^{(1)}) \leq \ldots \leq \bar\mu(r^{(2^m)})$. Moreover, $r^{(2^m)} = -\cos^{2^m}\!\omega \cdot r$ and $-\cos^{2^m}\!\omega  < -\frac 1 {{\norm r}^2}$. 
Hence, by (b), it follows $\bar\mu(r^{(2^m)}) \leq \bar\mu(-\frac 1 {{\norm r}^2} r)$.

(f) Let $r \in R^2$ be such that $\norm{r} > 1$. Then $\bar\mu(r) = 0$.

Proof of (f): Assume that $\bar\mu(r) = 1$. By (e), $\bar\mu(r) \leq \bar\mu(- \tfrac 1 {{\norm r}^2} r)$, hence $\bar\mu(- \tfrac 1 {{\norm r}^2} r) = 1$. But $\iota r$ and $\iota(-\tfrac 1 {{\norm r}^2} r)$ are perpendicular, a contradiction.

(g) There are $r, s, t \in R^2$ such that $\iota r, \iota s, \iota t$ are mutually orthogonal and $\norm r, \norm s, \linebreak \norm t > 1$.

Proof of (g): Consider $(2, 0)$, $(-\frac 1 2, 1)$, and $(-\frac 1 2, -\frac 5 4)$.
\end{proof}

Whereas Theorem~\ref{thm:classical-space-no-two-valued-measures} might in the present context be of limited applicability, its following corollary is more useful.

\begin{lemma} \label{lem:Euclidean-subfield-of-reals-in-fixed-field}
Let $R$ be a subfield of the Baer ordered $\star$-field $K$. Assume moreover that $R$ is isomorphic to a Euclidean subfield of $\Reals$. Then $K^3$, endowed with the standard inner product, does not possess a two-valued measure.
\end{lemma}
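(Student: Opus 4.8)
The plan is to deduce the claim from Theorem~\ref{thm:classical-space-no-two-valued-measures}: a hypothetical two-valued measure on $K^3$ should restrict to one on the three-dimensional subspace $R^3$, and the latter, transported into the reals, is excluded by that theorem. First I would record that, since $R$ is isomorphic to a Euclidean subfield of $\Reals$, its ordering is uniquely determined---a positive element being precisely a non-zero square---so that $R$ admits no field automorphism besides the identity. As $R$ is a $\star$-subfield of $K$, the involution $^\star$ restricts to an automorphism of $R$ and is therefore trivial on $R$; that is, $R \subseteq S_K$. Hence, for $x, y \in R^3$, the value $\herm x y$ of the standard inner product lies in $R$, so that $R^3$ equipped with this form is a three-dimensional Hermitian space over $R$; it is positive definite, a non-zero sum of squares being positive in the unique (Euclidean) order of $R$. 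Moreover the orthogonality relation on $P(R^3)$ is exactly the restriction of the one on $P(K^3)$, since $\herm x y$ takes the same value whether evaluated over $R$ or over $K$.

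Next I would observe that $K^3$, with the standard inner product, is anisotropic: for $x = (\alpha_1, \alpha_2, \alpha_3) \neq 0$ we have $\herm x x = \alpha_1 \alpha_1^\star + \alpha_2 \alpha_2^\star + \alpha_3 \alpha_3^\star$, where, by Baer orderedness, each summand lies in $S_K$ and is $\geq 0$, and at least one of them is $> 0$, so $\herm x x \neq 0$. Being anisotropic of dimension three, $K^3$ has the property that any three mutually orthogonal non-zero vectors are linearly independent and hence form an orthogonal basis. Suppose now, for contradiction, that $\mu$ is a two-valued measure on $K^3$---equivalently, in this dimension, a map $\mu \colon P(K^3) \to \{0,1\}$ such that among any three mutually orthogonal members of $P(K^3)$ exactly one is sent to $1$---and let $\nu$ be the restriction of $\mu$ to $P(R^3) \subseteq P(K^3)$. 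Given mutually orthogonal $\lin x, \lin y, \lin z \in P(R^3)$, these are mutually orthogonal in $P(K^3)$ as well and therefore constitute an orthogonal basis of $K^3$, whence exactly one of $\mu(\lin x), \mu(\lin y), \mu(\lin z)$ equals $1$. So $\nu$ is a two-valued measure on the three-dimensional positive definite Hermitian space $R^3$.

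Finally, transporting $\nu$ along a field isomorphism of $R$ onto a Euclidean subfield of $\Reals$---which preserves the orders, the squares, and hence the inner product and the orthogonality relation---yields a two-valued measure on a three-dimensional positive definite Hermitian space over a Euclidean subfield of the reals, in contradiction with Theorem~\ref{thm:classical-space-no-two-valued-measures}. Consequently $K^3$ admits no two-valued measure. The argument is otherwise mechanical; the two points that need care are the reduction $R \subseteq S_K$, which rests on the rigidity of Euclidean fields, and the remark that a triple of mutually orthogonal lines of $R^3$ is still a maximal orthogonal set---hence an orthogonal basis---when viewed in $K^3$. The latter uses the anisotropy of $K^3$ and is, I expect, the only genuinely load-bearing step.
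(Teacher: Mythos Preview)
Your argument is the same as the paper's: restrict a hypothetical two-valued measure on $K^3$ along the inclusion $P(R^3) \hookrightarrow P(K^3)$ and invoke Theorem~\ref{thm:classical-space-no-two-valued-measures}. You fill in several details the paper leaves implicit---anisotropy of $K^3$ from the Baer order, the fact that an orthogonal triple in $P(R^3)$ remains a maximal orthogonal set in $P(K^3)$, and the transport along the field isomorphism---whereas the paper phrases the reduction via orthogonality-preserving maps to~$\Three$.

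One gap to flag: you write ``As $R$ is a $\star$-subfield of $K$'' and then use the rigidity of Euclidean fields to conclude $R \subseteq S_K$. The hypothesis only says $R$ is a subfield of $K$; closure under $^\star$ is not automatic, so the restriction of $^\star$ to an automorphism of $R$ is unjustified as written. The paper glosses over the same point (it simply asserts that the inclusion $P(R^3) \to P(K^3)$ is orthogonality-preserving), and both the lemma's internal label and its sole application---where $R$ is explicitly chosen inside $S_K$---indicate that $R \subseteq S_K$ is the intended reading. With that reading your proof is complete and more careful than the original.
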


\begin{proof}
The inclusion map $R^3 \to K^3$ induces a map $P(R^3) \to P(K^3)$, which is injective and orthogonality-preserving. Hence, if there is an orthogonality-preserving map from $P(K^3)$ to $\Three$, there is an orthogonality-preserving map from $P(R^3)$ to $\Three$.

Assume that there is a two-valued measure on $K^3$. Then it follows that there is a two-valued measure on $R^3$. This in turn is impossible by Theorem~\ref{thm:classical-space-no-two-valued-measures}.
\end{proof}

Let $R$ be a subfield of an ordered field $S$. Equipped with the inherited order, $R$ is an ordered field again. We note that we may speak about the infinitesimal and the medial elements of $R$ without the need to specify whether we refer to $R$ or $S$. Indeed, we have that $M_R = M_S \cap R$ and $I_R = I_S \cap R$ because $R$ contains the rational subfield of $S$.

\begin{lemma} \label{lem:quadratic-extension}
Let $S$ be a Euclidean field and let $R$ be an Archimedean subfield of $S$. Then any quadratic extension of $R$ is Archimedean as well.
\end{lemma}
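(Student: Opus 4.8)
The plan is to reduce the statement to two elementary facts about ordered fields: that $x \mapsto x^2$ is monotone on the positive cone, and that an ordered field is Archimedean exactly when every element is bounded by a natural number. First I would make the extension concrete. Since the characteristic is $0$, a quadratic extension of $R$ has the form $L = R(\sqrt d)$ with $d \in R$ not a square in $R$; for $L$ to be an ordered field extending the order of $R$ — which is the situation in which ``Archimedean'' is meaningful, and the one occurring in the sequel — one needs $d > 0$ in $R$. As $R \subseteq S$ carries the order inherited from $S$ and $S$ is Euclidean, $d > 0$ yields $\sqrt d \in S$; hence we may regard $L = R(\sqrt d)$ as a subfield of $S$, equipped with the order inherited from $S$. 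This is the only role of the Euclidean hypothesis on $S$: it realises the quadratic extension, compatibly with the order, inside $S$ rather than merely as an abstract field.

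Next I would bound $\sqrt d$ and then an arbitrary element of $L$. Since $R$ is Archimedean there is an $n \in \Naturals$ with $0 < d \leq n$, hence $0 < d \leq n^2$. Working in the ordered field $L$, were $\sqrt d > n$ we would get $d = (\sqrt d)^2 > n^2 \geq d$ by squaring the inequality $\sqrt d > n > 0$, which is absurd; therefore $0 < \sqrt d \leq n$. Now let $\alpha \in L$ be arbitrary and write $\alpha = a + b\sqrt d$ with $a, b \in R$ (using that $\{1, \sqrt d\}$ is an $R$-basis of $L$). Using again that $R$ is Archimedean, choose $m, k \in \Naturals$ with $|a| \leq m$ and $|b| \leq k$; then
\[ |\alpha| \;\leq\; |a| + |b|\,\sqrt d \;\leq\; m + kn \in \Naturals. \]
Thus $L$ possesses no infinite element, and consequently no nonzero infinitesimal, since the inverse of a nonzero infinitesimal would be infinite. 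Hence $0$ is the only infinitesimal of $L$, that is, $L$ is Archimedean.

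I expect no serious obstacle in the argument itself: the whole computation takes place inside the single ordered field $L$ and uses only monotonicity of $x \mapsto x^2$ on the positives together with the Archimedean property of $R$. The one point that deserves a word of care is the reading of ``quadratic extension'': the conclusion is meaningful only for extensions that are ordered fields, and the Euclidean hypothesis on $S$ is precisely what makes each such extension $R(\sqrt d)$ (with $d>0$ in $R$) available inside $S$ with a compatible order, so that one may speak of its infinitesimals at all.
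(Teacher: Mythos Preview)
Your proof is correct and follows the same overall strategy as the paper: use the Euclidean hypothesis on $S$ to realise $R(\sqrt d)$ (with $d>0$) as an ordered subfield of $S$, and then check by a short ordered-field computation that no element is infinite/infinitesimal.

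The only difference is in the finishing computation. You bound $\sqrt d$ directly by a natural number via monotonicity of squaring, and then bound an arbitrary $a+b\sqrt d$ by the triangle inequality; this shows there are no infinite elements. The paper instead assumes $\alpha+\beta\sqrt\gamma$ is a nonzero infinitesimal and multiplies by the conjugate $\alpha-\beta\sqrt\gamma$ (which is finite) to obtain that $\alpha^2-\beta^2\gamma \in R$ is infinitesimal, hence zero, forcing $\gamma$ to be a square in $R$. Your argument is a touch more elementary and constructive (explicit bounds), while the paper's conjugate trick avoids bounding $\sqrt\gamma$ at all; both are equally valid and of comparable length.
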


\begin{proof}
Let $\gamma \in R^+$ not possess a square root in $R$. We have to show that $R(\sqrt \gamma)$ is Archimedean. Since $R$ is Archimedean, we have that $R \subseteq M_S \cup \{0\}$. In particular, $\gamma \in M_S$ and this implies that also $\sqrt \gamma \in M_S$.

Furthermore, $R(\sqrt \gamma) = \{ \alpha + \beta \sqrt \gamma \colon \alpha, \beta \in R \}$. Assume that $\alpha, \beta \in R$ are such that $\alpha + \beta \sqrt \gamma$ is a non-zero infinitesimal element. Note that then $\beta \neq 0$. Since $\alpha - \beta \sqrt \gamma \in F_S$, it follows that also $\alpha^2 - \beta^2 \gamma = (\alpha - \beta \sqrt \gamma)(\alpha + \beta \sqrt \gamma)$ is infinitesimal. But the only infinitesimal element of $R$ is $0$, hence $\gamma = (\frac \alpha \beta)^2$, a contradiction. We conclude that $R(\sqrt \gamma) \subseteq M_S \cup \{0\}$ and it follows that $R(\sqrt \gamma)$ is Archimedean.
\end{proof}

\begin{lemma} \label{lem:Euclidean-fixed-field}
Let $S$ be a Euclidean field. Then there is a smallest Euclidean subfield $R$ of $S$. Moreover, $R$ is isomorphic to an ordered subfield of $\Reals$.
\end{lemma}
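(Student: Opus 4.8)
The plan is to obtain $R$ as the \emph{Euclidean closure of the prime field of $S$}, built up from below. Write $\Rationals$ for the prime subfield of $S$, put $\Rationals_0 = \Rationals$ and, inductively, $\Rationals_{n+1} = \Rationals_n\bigl(\sqrt\alpha \colon \alpha \in \Rationals_n,\ \alpha > 0\bigr)$, where $\sqrt\alpha$ denotes the unique positive square root of $\alpha$ in $S$; this exists because $S$ is Euclidean. Set $R = \bigcup_{n \geq 0} \Rationals_n$. Since the $\Rationals_n$ form an increasing chain of subfields of $S$, the union $R$ is a subfield of $S$, and it is Euclidean: a positive element of $R$ lies in some $\Rationals_n$, hence its positive square root lies in $\Rationals_{n+1} \subseteq R$. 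To see that $R$ is the smallest Euclidean subfield, I would show $R \subseteq E$ for an arbitrary Euclidean subfield $E$ of $S$. Every subfield contains the prime field, so $\Rationals \subseteq E$, and one proceeds by induction: if $\Rationals_n \subseteq E$ and $\alpha \in \Rationals_n$ is positive, then, $E$ being Euclidean, some $\beta \in E$ satisfies $\beta^2 = \alpha$; one of $\beta, -\beta$ is positive in $E$, hence positive in $S$ as the orders agree, hence equal to $\sqrt\alpha$, so $\sqrt\alpha \in E$ and thus $\Rationals_{n+1} \subseteq E$. Being Euclidean and contained in every Euclidean subfield, $R$ is the least one.

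For the ``moreover'' part it is enough to prove that $R$ is Archimedean, since every Archimedean ordered field is isomorphic to an ordered subfield of $\Reals$, as recalled in Section~\ref{sec:Hermitian-spaces}. Fix $x \in R$, say $x \in \Rationals_n$. The crucial observation is that $x$ then lies in a subfield $L$ of $S$ that is reached from $\Rationals$ by finitely many successive quadratic extensions: there is a chain $\Rationals = L_0 \subseteq L_1 \subseteq \cdots \subseteq L_m = L$ with $x \in L$ and $L_{i+1} = L_i(\sqrt{c_i})$ for some $c_i \in L_i$ with $c_i \geq 0$, where each step is either trivial (when $c_i$ is already a square in $L_i$) or a genuine quadratic extension inside the Euclidean field $S$. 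Granting this, one argues by induction along the chain: $L_0 = \Rationals$ is Archimedean, and by Lemma~\ref{lem:quadratic-extension} each step $L_i \subseteq L_{i+1}$ preserves the Archimedean property. Hence $L$, and therefore $x$, is medial or zero; as $x \in R$ was arbitrary, $R \subseteq M_S \cup \{0\}$, that is, $R$ is Archimedean (recall from the paragraph preceding Lemma~\ref{lem:quadratic-extension} that $M_R = M_S \cap R$).

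The step needing care is the finite-tower observation, because the extension $\Rationals_{n+1}/\Rationals_n$ is in general of infinite degree, so one cannot directly run the above induction along the tower $(\Rationals_n)_n$ itself. The resolution is that a single element of $\Rationals_{n+1}$ involves only finitely many of the adjoined square roots, so it lies in $\Rationals_n(\sqrt{\alpha_1}, \ldots, \sqrt{\alpha_k})$ for finitely many positive $\alpha_1, \ldots, \alpha_k \in \Rationals_n$; each $\alpha_j$ is in turn of the same shape over $\Rationals_{n-1}$, and so on down to $\Rationals_0 = \Rationals$. Collecting all radicands that occur and adjoining their square roots one at a time, in an order in which each radicand already lies in the field constructed so far --- such an order exists because the dependency bottoms out at $\Rationals$ --- produces the required finite chain $L_0 \subseteq \cdots \subseteq L_m$ of quadratic or trivial extensions; a routine induction on $n$ makes this precise. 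Everything remaining is bookkeeping with the order inherited by subfields of $S$.
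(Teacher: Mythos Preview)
Your argument is correct and follows essentially the same route as the paper's: construct $R$ as the Euclidean closure of the prime subfield, observe that any Euclidean subfield must contain it, and deduce Archimedeanity by climbing finite towers of quadratic extensions via Lemma~\ref{lem:quadratic-extension}. The paper packages the tower step more tersely, citing \cite[Proposition~2.12]{Lam} for the ``double inductive process'' and then invoking that a union of Archimedean subfields is Archimedean, whereas you spell out explicitly why a single element of $R$ lies in a finite chain of quadratic extensions of $\Rationals$; the two formulations are interchangeable.
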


\begin{proof}
Let $Q$ be the rational subfield of $S$ and let $R$ be the Euclidean closure of $Q$, that is, the smallest subfield of $S$ containing $Q$ and such that $\alpha \in R^+$ implies $\sqrt \alpha \in R$. Any Euclidean subfield of $S$ contains $Q$ and hence $R$, hence $R$ is the smallest Euclidean subfield of $S$. 

$Q$ is isomorphic to $\Rationals$ and since $\Rationals$ can be ordered in only one way, $Q$ is in fact an ordered subfield of $S$ that is order-isomorphic to $\Rationals$ equipped with its natural order. We conclude that $Q$ is Archimedean.

Furthermore, the formation of the Euclidean closure of $Q$ is the result of a double inductive process, each step being a quadratic extension; cf., e.g., \cite[Proposition~2.12]{Lam}. Since by Lemma~\ref{lem:quadratic-extension} the Archimedean property is preserved in each step and since the union of Archimedean subfields is Archimedean again, we conclude that also $R$ is Archimedean and hence an ordered subfield of~$\Reals$.
\end{proof}

Let $K$ be a Baer ordered $\star$-field. Then its fixed field $S_K$ is endowed with a linear order~$\leq$ with the effect that $(S_K; +, 0, \leq)$ is a totally ordered group and, for any $\alpha, \beta \in S_K$, $\alpha \geq 0$ implies $\alpha \beta^2 \geq 0$. We will say that the fixed field of $K$ is Euclidean if, in $S_K$, any positive element is a square. Clearly, $S_K$ is in this case actually an ordered field and this ordered field is Euclidean.

\begin{lemma} \label{lem:Euclidean-fixed-field-and-two-valued-measures}
Let $K$ be a Baer ordered $\star$-field whose fixed field is Euclidean. Then $K^3$, endowed with the standard inner product, does not possess a two-valued measure.
\end{lemma}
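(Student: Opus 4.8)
The plan is to reduce the statement to the already-settled case of a Euclidean subfield of the reals, so that essentially no new work is required beyond assembling the preceding lemmas.

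First I would invoke the hypothesis: the fixed field $S_K$ is a Euclidean ordered field (as noted just before the statement, Baer orderedness together with the square-root condition makes $S_K$ an honest ordered field, and it is Euclidean by assumption). Hence Lemma~\ref{lem:Euclidean-fixed-field} applies to $S = S_K$: there is a smallest Euclidean subfield $R$ of $S_K$, and $R$ is order-isomorphic to an ordered subfield of $\Reals$. Since $R$ is Euclidean and Euclideanness is transported along an order-isomorphism (if $\alpha>0$ then $\alpha=\beta^2$, and applying the isomorphism $\sigma$ gives $\sigma(\alpha)>0$ with $\sigma(\alpha)=\sigma(\beta)^2$), the image of $R$ in $\Reals$ is a Euclidean subfield of $\Reals$. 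In other words, $R$ is isomorphic to a Euclidean subfield of $\Reals$.

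Next, since $R \subseteq S_K \subseteq K$, the field $R$ is a subfield of the Baer ordered $\star$-field $K$ that is isomorphic to a Euclidean subfield of $\Reals$. This is precisely the hypothesis of Lemma~\ref{lem:Euclidean-subfield-of-reals-in-fixed-field}, which then yields directly that $K^3$, endowed with the standard inner product, does not possess a two-valued measure, completing the proof.

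I do not expect a real obstacle here: the substantive content has already been discharged in Theorem~\ref{thm:classical-space-no-two-valued-measures} (the Gleason-type argument ruling out two-valued measures over Euclidean subfields of $\Reals$) and in Lemma~\ref{lem:Euclidean-fixed-field} (locating such a subfield inside any Euclidean field). The only point deserving a moment's attention is making sure that the smallest Euclidean subfield $R$ of $S_K$ embeds into $\Reals$ \emph{as a Euclidean subfield}, i.e.\ that both the field operations and the order are carried across; but this is exactly what the order-isomorphism in Lemma~\ref{lem:Euclidean-fixed-field} provides, so the argument goes through.
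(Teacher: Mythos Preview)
Your proposal is correct and follows essentially the same route as the paper: apply Lemma~\ref{lem:Euclidean-fixed-field} to the Euclidean fixed field $S_K$ to obtain a subfield isomorphic to a Euclidean subfield of $\Reals$, and then invoke Lemma~\ref{lem:Euclidean-subfield-of-reals-in-fixed-field}. The only difference is that you spell out explicitly why the image in $\Reals$ is Euclidean, a point the paper's one-line proof leaves implicit.
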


\begin{proof}
By Lemma~\ref{lem:Euclidean-fixed-field}, $S_K$ possesses a subfield that is isomorphic to a Euclidean subfield of the reals. Hence the assertion follows from Lemma~\ref{lem:Euclidean-subfield-of-reals-in-fixed-field}.
\end{proof}

We arrive at our main result. We denote by \EOS\ the full subcategory of \LOS, and hence of \NOS, consisting of orthogonality spaces that arise from (finite-dimensional) positive definite Hermitian spaces over Baer ordered $\star$-field whose fixed field is Euclidean.

\begin{theorem}
Let $H$ be a positive definite Hermitian space of finite dimension $\geq 3$ over a Baer ordered $\star$-field with a Euclidean fixed field. Let $H'$ be a further finite-dimensional Hermitian space. Then any morphism in \LOS\ between $P(H)$ and $P(H')$ is induced by a generalised semiunitary map.

In particular, any morphism in \EOS\ between orthogonality spaces of rank $\geq 3$ is induced by a generalised semiunitary map.
\end{theorem}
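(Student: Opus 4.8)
The plan is to obtain the statement as a direct corollary of Theorem~\ref{thm:morphisms-of-LOS-and-semiunitary-maps}, by checking that its two hypotheses on $K$ are automatically satisfied once $K$ is a Baer ordered $\star$-field with a Euclidean fixed field. First I would verify condition (1) of that theorem: given $\alpha \in S_K$ with $\alpha \geq 0$, Euclideanity of the fixed field supplies a $\beta \in S_K$ with $\alpha = \beta^2$, and since $\beta$ lies in the fixed field we have $\beta^\star = \beta$, so $\alpha = \beta \beta^\star$ as required (the case $\alpha = 0$ being handled by $\beta = 0$). Condition (2) — that $K^3$ equipped with the standard inner product carries no two-valued measure — is precisely the content of Lemma~\ref{lem:Euclidean-fixed-field-and-two-valued-measures}, so there is nothing further to prove there. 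With both hypotheses in hand, Theorem~\ref{thm:morphisms-of-LOS-and-semiunitary-maps} applies verbatim to $H$ (which is positive definite of dimension $\geq 3$ over such a $K$) and $H'$, yielding that every morphism in \LOS\ between $P(H)$ and $P(H')$ is induced by a generalised semiunitary map. This is the first assertion.

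For the ``in particular'' clause I would argue that it is a matter of book-keeping. An object of \EOS\ is by definition of the form $P(H)$ for a positive definite Hermitian space $H$ over a Baer ordered $\star$-field with Euclidean fixed field, and by Theorem~\ref{thm:orthogonality-spaces-by-orthomodular-spaces} the rank of $P(H)$ equals $\dim H$; hence if the source object has rank $\geq 3$, the corresponding Hermitian space has dimension $\geq 3$, while the target object corresponds to an arbitrary finite-dimensional Hermitian space. Since \EOS\ is a full subcategory of \LOS, every morphism of \EOS\ is a morphism of \LOS, so the first part applies and delivers the conclusion.

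I do not expect a genuine obstacle here: the substance has already been established in Theorem~\ref{thm:morphisms-of-LOS-and-semiunitary-maps} and Lemma~\ref{lem:Euclidean-fixed-field-and-two-valued-measures}. The only points requiring a moment's care are the translation of ``Euclidean fixed field'' into condition (1) — namely recognising that squares in $S_K$ are automatically of the form $\beta \beta^\star$ because the involution fixes $S_K$ pointwise — and, on the categorical side, invoking the rank/dimension correspondence together with the fullness of \EOS\ in \LOS\ so that the ``in particular'' statement claims no more than what the first part provides.
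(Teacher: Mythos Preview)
Your proposal is correct and matches the paper's own proof essentially line for line: verify conditions (1) and (2) of Theorem~\ref{thm:morphisms-of-LOS-and-semiunitary-maps}, the first from the Euclidean fixed-field assumption and the second from Lemma~\ref{lem:Euclidean-fixed-field-and-two-valued-measures}. Your write-up is in fact slightly more explicit than the paper's, spelling out why $\alpha=\beta^2=\beta\beta^\star$ and unpacking the ``in particular'' clause via the rank/dimension correspondence and the fullness of \EOS\ in \LOS.
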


\begin{proof}
We verify the two conditions in Theorem~\ref{thm:morphisms-of-LOS-and-semiunitary-maps}.

Let $K$ be the scalar $\star$-field of $H$. Since the fixed field of $K$ is Euclidean, condition (1) is fulfilled. Moreover, by Lemma~\ref{lem:Euclidean-fixed-field-and-two-valued-measures}, $K^3$ does not possess two-valued measures. Hence also condition (2) holds.
\end{proof}

\section{Conclusion}
\label{sec:Conclusion}

The objective of this paper has been to establish a categorical framework for orthogonality spaces. The latter structures can be identified with undirected graphs and in the context of graph theory, categories have already been studied, e.g., in \cite{Faw}. However, the categories discussed by the graph theorists have turned out to be unsuitable in the present context. Our primary example originates from quantum physics and hence our intention has been to introduce a category whose morphisms, when applied to linear orthogonality spaces, come close to linear mappings. We have therefore introduced normal orthogonality spaces, which are still more general than linear orthogonality spaces. But normality suggests a definition of morphisms such that, when applied in the context of inner-product spaces, not only the orthogonality relation is taken into account but also the linear structure.

We believe that the presented work is a first step into an area that offers numerous issues for further investigations. For instance, we have shown that the morphism between specific Hermitian spaces can be represented by generalised semiunitary maps. It has remained open whether a similar statement is possible for a broader class. In fact, whereas generalised semilinear maps have been studied by several authors, there does not seem to exist any detailed account on maps also preserving an inner product. Moreover, we have seen that the existence of two-valued measures plays a role in the discussion. This question as well as Gleason's Theorem have been  studied, with some exceptions \cite{Dvu}, in the context of classical fields, whereas the present context suggests to take into account further non-classical fields.

To mention finally a particularly interesting issue, recall that the lattice-theoretic approach has often been criticised for its inability to deal appropriately with common constructions of Hilbert spaces, like direct sums and tensor products. In the framework of orthogonality spaces, the situation is much different and a categorical framework might be useful for these matters.

{\bf Acknowledgement.} The authors acknowledge the support by the bilateral Austrian Science Fund (FWF) project I 4579-N and Czech Science Foundation (GA\v CR) project 20-09869L ``The many facets of orthomodularity''.

\end{document}